 \newtheorem{thm}{Theorem}[section]
 \newtheorem{cor}[thm]{Corollary}
 \newtheorem{lem}[thm]{Lemma}
 \newtheorem{prop}[thm]{Proposition}
 \newtheorem{dfn}[thm]{Definition}
 \newtheorem{rmk}[thm]{Remark}
 \newtheorem{exa}[thm]{Example}
 \theoremstyle{definition}
 \theoremstyle{remark}
 \numberwithin{equation}{section}
\newcommand{\sm}{\left(\begin{smallmatrix}}
\newcommand{\esm}{\end{smallmatrix}\right)}
\newcommand{\mat}{\left(\begin{matrix}}
\newcommand{\emat}{\end{matrix}\right)}
\newcommand{\mbf}{\mathbf}
\newcommand{\mc}{\mathcal}
\def\CC{\mathbb{C}}
\def\HH{\mathbb{H}}
\def\RR{\mathbb{R}}
\def\ZZ{\mathbb{Z}}
\def\m{\mathrm{mod}}
\def\GL{\mathrm{GL}}
\def\SL{\mathrm{SL}}
\begin{document}

\title{Structures for pairs of mock modular forms with the Zagier duality}

%%%%%%%%%%%%%%%%%%%%%%%%%%%%%%%%%%%%%%%%%%%%%%%%%%%%%%%%%%%%%%%%%%%%%%%%

 \author{Dohoon Choi}
 \author{Subong Lim}

 \address{School of liberal arts and sciences, Korea Aerospace University, 200-1, Hwajeon-dong, Goyang, Gyeonggi 412-791, Republic of Korea}
  \email{choija@kau.ac.kr}

 \address{School of Mathematics, Korea Institute for Advanced Study, Hoegiro 85, Dongdaemun-gu, Seoul 130-722, Republic of Korea}
  \email{subong@kias.re.kr}

 \thanks{Keynote:  Zagier duality, Eichler integral, Supplementary function}
  \thanks{1991
 Mathematics Subject Classification: 11F11   }
 % \thanks{This work
 % was partially supported by
 %   KRF-2007-412-J02301 and ITRC}

\begin{abstract}Zagier introduced special bases for weakly holomorphic modular forms to give the new proof of Borcherds' theorem on the infinite product expansions of integer weight modular forms on $\SL_2(\ZZ)$ with a Heegner divisor. These good bases appear in pairs, and they satisfy a striking duality, which is now called  Zagier duality. After the result of Zagier, this type duality was studied broadly in various view points including the theory of a mock modular form.  In this paper, we consider this problem with the Eichler cohomology theory, especially the supplementary function theory developed by Knopp. Using holomorphic Poincar\'e series and their supplementary functions, we construct a pair of families of vector-valued harmonic weak Maass forms satisfying the Zagier duality with integer weights $-k$ and $k+2$ respectively, $k>0$, for a $H$-group. We also investigate the structures of them such as the images under the  differential operators $D^{k+1}$ and $\xi_{-k}$ and quadric relations of the critical values of their $L$-functions.
\end{abstract}

\maketitle

%%%%%%%%%%%%%%%%%%%%%%%%%%%%%%%%%%%%%%%%%%%%%%%%%%%%%%%%%%%%%%%%%%%%%%%%
\section{Introduction} \label{section1}
%%%%%%%%%%%%%%%%%%%%%%%%%%%%%%%%%%%%%%%%%%%%%%%%%%%%%%%%%%%%%%%%%%%%%%%%

In his famous paper \cite{Zag}, Zagier displayed a beautiful result that the generating functions for traces of singular moduli are essentially weight $3/2$ weakly holomorphic modular forms. This result is related to the new proof of Borcherds' theorem on the infinite product expansions of integer weight modular forms on $\SL_2(\ZZ)$ with a Heegner divisor. Zagier's proof relies on special bases for weakly holomorphic modular forms which  appear in pairs and satisfy a striking duality, which is now called  $\it{Zagier\ duality}$.

Zagier gave a prominant example of this phenomenon in the spaces $M^!_{\frac 12, \chi}$ and $M^!_{\frac 32, \chi}$, where $M^!_{k,\chi}$ is the vector space of weight $k$ weakly holomorphic modular forms on $\Gamma_0(4)$ satisfying the Kohnen plus condition. Here,
$\chi$ is the $\theta$-multiplier system defined by $\chi(\gamma) = \frac{\theta(\gamma\tau)}{(c\tau+d)^{\frac12}\theta(\tau)}$ for $\gamma = \sm a&b\\c&d\esm \in \Gamma_0(4)$ and $\theta(\tau) = \sum_{n\in\ZZ} e^{2\pi in^2\tau}$.
There is a natural infinite basis $\{F_1(-1;\tau),\ F_1(-4;\tau),\ F_1(-5;\tau),\ \cdots\}$ for $M^!_{\frac 32, \chi}$, and the first few coefficients of these series are
\begin{eqnarray*}
F_1(-1;\tau) &=& q^{-1} -2  + 248q^{3} -492q^4 + 4119q^7 - \cdots,\\
F_1(-4;\tau) &=& q^{-4} -2 - 26752q^3- 143376q^4 - 8288256q^7-\cdots,\\
F_1(-5;\tau) &=& q^{-5} + 0 + 85995q^3 - 565760q^4 + 52756480q^7 - \cdots,
\end{eqnarray*}
where $q = e^{2\pi i\tau}$. Similarly, there is a basis $\{F_0(0;\tau),\ F_0(-3;\tau),\ F_0(-4;\tau), \cdots\}$ for $M^!_{\frac12, \chi}$, and the first few coefficients of these forms are
\begin{eqnarray*}
F_0(0;\tau) &=& 1 + 2q + 2q^4+ 0q^5 + \cdots,\\
F_0(-3;\tau) &=& q^{-3} -248q + 26752q^4 - 85995q^5 + \cdots,\\
F_0(-4;\tau) &=& q^{-4} + 492q + 143376q^4 + 565760q^5 + \cdots,\\
F_0(-7;\tau) &=& q^{-7} -4119q + 8288256q^4 - 52756480q^5 + \cdots.
\end{eqnarray*}
We can easily see that there is a striking pattern relating the coefficients of the $F_1(-m;\tau)$, grouped by column, and the coefficients of the individual forms $F_0(-n;\tau)$.

Although the existence of Zagier duality looks astonishing, many other examples were found recently.
Bringmann and Ono \cite{BO} established the Zagier duality of half-integer weight modular forms for $\Gamma_0(4)$ satisfying Kohnen plus condition. Folsom and Ono \cite{FO} found another example, which is especially interesting for its close connection to the Ramanujan mock theta function.

In the case of even integer weight $k$, the Zagier duality appeared in \cite{AKN} for small values of $k$ (namely, $k = 0,\ 4,\ 6,\ 8,\ 10,\ 14$). The results of \cite{AKN} were generalized in \cite{DJ} to arbitrary even $k$. Rouse \cite{Rou} also proved the Zagier duality between certain weakly holomorphic modular forms of weight $0$ and $2$ on $\Gamma_0(p)$ for $p \in \{5,\ 13,\ 17\}$ and the first author \cite{Cho} gave a simple proof of Rouse's result by using the residue theorem. Guerzhoy \cite{Gue} formulated this duality by saying that these coefficients constitute a grid and showed
the existence of the grid of weight $k\geq2$ for $\SL_2(\ZZ)$. Cho and Choie \cite{CC} derived the existence of the Zagier duality for vector-valued modular forms with the Weil representation for $\SL_2(\ZZ)$.
Recently, Bringmann, Kane and Rhoades \cite{BKR} discussed Zagier duality in a general context using the flipping operator, which is related to the theory of supplementary functions.

In this paper, we consider the grid of vector-valued modular forms for a $H$-group, from which the grid of (scalar-valued) modular forms follows as a corollary. Using supplementary functions, we construct a pair of families of vector-valued harmonic weak Maass forms $G_{n_2}(\tau)$ and vector-valued weakly holomorphic modular forms $f_{n_1}(\tau)$ satisfying the Zagier duality with integer weights $-k$ and $k+2$ respectively, $k>0$, for a $H$-group. Furthermore, we compute the images of vector-valued harmonic weak Maass forms in a grid under the two important operators $D^{k+1}$ and $\xi_{-k}$. Eventually we can interpret the structure of forms $f_n(\tau)$ and $G_n(\tau)$ satisfying the Zagier duality in term of holomorphic Poincar\'e series:
$$
\begin{CD}
f_n=P_{n} @<~~~~~supplementary~~~<< P_{-n}\\
@AAD^{k+1}A @AA \xi_{-k} A\\
G_n @= G_n
\end{CD}
$$
Here, $P_{-n}(\tau)$ denotes a holomorphic Poincar\'e series of order $n, n>0$. These results give another symmetries: quadric relations of the critical values of their $L$-functions.

The supplementary function theory was developed by Knopp \cite{Kno} for scalar-valued modular forms of integer weights on a $H$-group and Gim\'enez \cite{Gim} considered the supplementary function theory for vector-valued modular forms of integer weights on $\SL_2(\ZZ)$. Based on the arguments of Knopp in \cite{Kno}, for our purpose we extend this theory to vector-valued modular forms of integer weights on a $H$-group.

First we recall some definitions and fix notations.
Let $\Gamma$ be a $H$-$\it{group}$, i.e., a finitely generated Fuchsian group of the first kind which has at least one parabolic class. Let $k\in\ZZ$ and $\chi$ a (unitary) character.  Let $p$ be a positive integer and $\rho:\Gamma\to \GL(p,\CC)$ a $p$-dimensional unitary complex representation.  We suppose that $\rho(B)$ is diagonal for all parabolic elements $B\in\Gamma$ and $\rho(-I_2) = I_p$, where $I_j$ is the identity matrix of order $j\in\ZZ_{>0}$. Let $T = \sm 1&\lambda\\ 0&1\esm,\ \lambda>0$, a generator of $\Gamma_{\infty}$, where $\Gamma_\infty$ denotes the stabilizer of $\infty$. Then
\[\chi(T)\rho(T) = \sm e^{2\pi i\kappa_1}& & & & \\
						                 & \cdot &&&\\
						                 && \cdot &&\\
						                 &&& \cdot &\\
						                 &&&& e^{2\pi i\kappa_p}\esm,\]
where $0\leq \kappa_j<1$ for $1\leq j\leq p$.  Also, if we let
\[\bar{\chi}(T)\bar{\rho}(T) = \sm e^{2\pi i\kappa'_1}& & & & \\
						                 & \cdot &&&\\
						                 && \cdot &&\\
						                 &&& \cdot &\\
						                 &&&& e^{2\pi i\kappa'_p}\esm,\]
with $0\leq\kappa_j'<1$ for $1\leq j\leq p$, then
\begin{equation*}
\kappa_j' =
\begin{cases}
0 & \text{if $\kappa_j=0$},\\
1-\kappa & \text{if $\kappa_j>0$},
\end{cases}
\end{equation*}
for $1\leq j\leq p$. We denote the standard basis elements of the vector space $\CC^p$ by $\mbf{e}_j$ for $1\leq j\leq p$.

We write $M^!_{k+2,\chi,\rho}(\Gamma)$ (resp. $H_{k+2,\chi,\rho}(\Gamma)$) for the space of vector-valued weakly holomorphic modular forms (resp. vector-valued harmonic weak Maass forms) of weight $k+2$, character $\chi$ and type $\rho$ on $\Gamma$ (see section \ref{section2}).
Two differential operators $\xi_{-k} := 2iv^{-k}(\overline{\frac{\partial }{\partial\bar{\tau}}})$ and $D^{k+1} := \biggl(\frac{1}{2\pi i}\frac{d}{d\tau}\biggr)^{k+1}$ play a central role in the theory of harmonic weak Maass forms, where $\tau = u+iv\in\HH$. The assignment $F(\tau)\mapsto \xi_{-k}(F)(\tau)$ gives an anti-linear mapping
\[\xi_{-k}: H_{-k,\chi,\rho}(\Gamma) \to M^!_{k+2,\bar{\chi},\bar{\rho}}(\Gamma)\]
and the assignment $F(\tau)\mapsto D^{k+1}(F)(\tau)$ gives a linear mapping
\[D^{k+1}: H_{-k,\chi,\rho}(\Gamma) \to M^!_{k+2,\chi,\rho}(\Gamma).\]
Let $H^*_{-k,\chi.\rho}(\Gamma)$ be the inverse image of the space of vector-valued cusp forms $S_{k+2,\bar{\chi},\bar{\rho}}(\Gamma)$ under the mapping $\xi_{-k}$. Any harmonic weak Maass form $F(\tau)\in H^*_{-k,\chi,\rho}(\Gamma)$ has a unique decomposition $F(\tau) = F^+(\tau) + F^-(\tau)$, where
\begin{eqnarray*}
F^+(\tau) &=& \sum_{j=1}^p \sum_{n\gg-\infty} a^+(n,j) e^{2\pi i(n+\kappa_j)\tau/\lambda} \mbf{e}_j,\\
F^-(\tau) &=& \sum_{j=1}^p \sum_{n+\kappa_j<0} a^-(n,j) H(2\pi (n+\kappa_j)v/\lambda)e^{2\pi i(n+\kappa_j)u/\lambda}\mbf{e}_j,
\end{eqnarray*}
where $H(w) = e^{-w}\int^\infty_{-2w}e^{-t}t^{k}dt$ (for this decomposition, see \cite[section 3]{BF}).
The first (resp. second) summand is called the {\it{holomorphic}} (resp. {\it{non-holomorphic}}) part of $F(\tau)$.

We use the term $\it{grid}$ to explain the Zagier duality more systematically, following \cite{Gue}.
\begin{dfn} We call two collections $f_{n_1,\alpha_1,\chi,\rho}(\tau)\in M^!_{k+2,\chi,\rho}(\Gamma)$ and $G_{n_2,\alpha_2,\bar{\chi},\bar{\rho}}(\tau)\in H^*_{-k,\bar{\chi},\bar{\rho}}(\Gamma)$ a vector-valued grid of weight $k+2$, character $\chi$ and type $\rho$ on $\Gamma$ if the following conditions hold for every indices $n_1$ and $n_2$ satisfying $n_1-\kappa_{\alpha_1}\geq0$ and $n_2-\kappa'_{\alpha_2}>0$:
\begin{enumerate}
\item[(1)]  $f_{n_1,\alpha_1,\chi,\rho}(\tau)$ and $G^+_{n_2,\alpha_2,\bar{\chi},\bar{\rho}}(\tau)$ vanish at all cusps of $\Gamma$ which are not equivalent to $i\infty$.
\item[(2)] At  $i\infty$, $f_{n_1,\alpha_1,\chi,\rho}(\tau)$ and $G^+_{n_2,\alpha_2,\bar{\chi},\bar{\rho}}(\tau)$ have expansions of the forms
\begin{eqnarray*}
f_{n_1,\alpha_1,\chi,\rho}(\tau) &=& e^{2\pi i(-n_1+\kappa_{\alpha_1})\tau/\lambda}\mbf{e}_{\alpha_1} + \sum_{j=1}^p \sum_{l+\kappa_j>0} a_{n_1,\alpha_1,\chi,\rho}(l,j) e^{2\pi i(l+\kappa_j)z/\lambda}\mbf{e}_j,\\
G^+_{n_2,\alpha_2,\bar{\chi},\bar{\rho}}(\tau) &=& e^{2\pi i(-n_2+\kappa'_{\alpha_2})\tau/\lambda}\mbf{e}_{\alpha_2} + \sum_{j=1}^p \sum_{l+\kappa'_j\geq0} b_{n_2,\alpha_2,\bar{\chi},\bar{\rho}}(l,j) e^{2\pi i(l+\kappa'_j)z/\lambda}\mbf{e}_j.
\end{eqnarray*}
\item[(3)] Fourier coefficients of $f_{n_1,\alpha_1,\chi,\rho}(\tau)$ and $G^+_{n_2,\alpha_2,\bar{\chi},\bar{\rho}}(\tau)$ satisfy the identity
\begin{equation*}
a_{n_1,\alpha_1,\chi,\rho}(n_2-(\kappa_{\alpha_2}+\kappa'_{\alpha_2}),\alpha_2) = - b_{n_2,\alpha_2,\bar{\chi},\bar{\rho}}(n_1-(\kappa_{\alpha_1}+\kappa'_{\alpha_1}),\alpha_1).
\end{equation*}
\end{enumerate}
\end{dfn}

In this paper we consider the grid of weight $k\in\ZZ$ on a $H$-group $\Gamma$ for vector-valued modular forms. We  assume that $-I_2\in\Gamma$.
%Suppose that $\{B_0,\cdots,B_t, V_1,\cdots, V_s\}$ is a fixed set of generators of $\Gamma$, including the $t+1$ parabolic generators $B_0,\cdots, B_t$ and the nonparabolic generators $V_1,\cdots, V_s$. If $\gamma\in\Gamma$, consider a factorization of $\gamma$:
%\[\gamma = C_1\cdots C_q.\]
%Each $C_i$ is either a nonparabolic generator of $\Gamma$ or a power of a parabolic generator of $\Gamma$ and $q$ is minimal. Then $q$ is called the $\it{Eichler\ length}$ of $\gamma$. We have the basic estimate \cite{Eic}
%\[q \leq n_1\log\mu(\gamma)+ n_2,\]
%where $\mu(\gamma) = a^2 + b^2 + c^2 + d^2$ for $\gamma = \sm a&b\\c&d\esm$ and $n_1,\ n_2$ are constants independent of $\gamma$. Let $K$ be a constant defined by
%\begin{equation} \label{K}
%K := \max\{ \rho_{l,m}(V_i)|\ 1\leq l,\ m\leq p,\ 1\leq i\leq s\},
%\end{equation}
%where $\rho_{l,m}(V_i)$ is the $(l,m)$-th entry of $\rho(V_i)$. Then a constant $\delta := n_1\log pK$ is a (nonnegative) constant, which depends only on $\rho$.
Let $P_{n_1,\alpha_1,\chi,\rho}(\tau)$ be a Poincar\'e series of order $-n_1$ defined by
\begin{equation} \label{dfnofpoincareseries}
P_{n_1,\alpha_1,\chi,\rho}(\tau) := \frac12\sum_{\gamma} \frac{e^{2\pi i(-n_1+\kappa_{\alpha_1})\gamma \tau/\lambda}}{\chi(\gamma)(c\tau+d)^{k+2}}\rho(\gamma)^{-1}\mbf{e}_{\alpha_1},
\end{equation}
where $\gamma= \sm *&*\\c&d\esm$ runs through a complete set of elements of $\Gamma$ with distinct lower row. In the following theorem we give a description of a grid in terms of these Poincar\'e series.

\begin{thm} \label{main1} Let $\Gamma$ be a $H$-group with $-I_2\in \Gamma$.
Suppose that $k$ is a positive integer, $\chi$ is a character and $\rho$ is a unitary representation such that $\rho(B)$ is diagonal for all parabolic elements $B\in\Gamma$
 and $\rho(-I_2) = I_p$.
Then there exists a unique vector-valued grid of weight $k+2$, character $\chi$ and type $\rho$. Moreover, we have
\[f_{n_1,\alpha_1,\chi,\rho}(\tau) = P_{n_1,\alpha_1,\chi,\rho}(\tau).\]
We also have
\begin{eqnarray*}
(D^{k+1} G_{n_2,\alpha_2,\bar{\chi},\bar{\rho}})(\tau) &=&  \biggl( \frac{-n_2+\kappa'_{\alpha_2}}{\lambda}\biggr)^{k+1}P_{n_2,\alpha_2,\bar{\chi},\bar{\rho}}(\tau)
\end{eqnarray*}
and
\[(\xi_{-k}G_{n_2,\alpha_2,\bar{\chi},\bar{\rho}})(\tau) = \frac{(-4\pi)^{k+1}}{\Gamma(k+1)}\biggl( \frac{-n_2+\kappa'_{\alpha_2}}{\lambda}\biggr)^{k+1}P_{n_2',\alpha_2,\chi,\rho}(\tau),\]
where
\begin{equation*}
n_2' =
\begin{cases}
-n_2 & \text{if $\kappa_{\alpha_2}=0$},\\
1-n_2 & \text{if $\kappa_{\alpha_2}>0$}.
\end{cases}
\end{equation*}
%$n_2'$ is equal to $-n_2$ if $\kappa_{\alpha_2}=0$ or $1-n_2$ if $\kappa_{\alpha_2}>0$.
\end{thm}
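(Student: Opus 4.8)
The plan is to exhibit both families explicitly --- the weakly holomorphic $f_{n_1}$'s directly as Poincar\'e series, the harmonic $G_{n_2}$'s via the supplementary function theory of the previous section --- and then to verify the three grid conditions, the two operator identities and uniqueness; the duality identity (3) is where I expect most of the work to lie. \emph{The family $f_{n_1,\alpha_1,\chi,\rho}$.} Because $k>0$ the weight $k+2$ is $>2$: on each orbit the seed $e^{2\pi i(-n_1+\kappa_{\alpha_1})\gamma\tau/\lambda}$ in \eqref{dfnofpoincareseries} is bounded (its modulus is $e^{2\pi(n_1-\kappa_{\alpha_1})\,\mathrm{Im}(\gamma\tau)/\lambda}$ and $\mathrm{Im}(\gamma\tau)\le v$), while $\sum_\gamma|c\tau+d|^{-(k+2)}$ converges, so $P_{n_1,\alpha_1,\chi,\rho}$ converges absolutely and locally uniformly and defines an element of $M^!_{k+2,\chi,\rho}(\Gamma)$. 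Its Fourier expansion at $i\infty$ is the classical Petersson one: the coefficient at frequency $(l+\kappa_j)/\lambda$ in the $j$-th component is a Kronecker delta isolating the seed, plus $\bigl(\tfrac{l+\kappa_j}{n_1-\kappa_{\alpha_1}}\bigr)^{(k+1)/2}$ times a $c$-sum of Kloosterman-type exponential sums for $(\chi,\rho)$ against $J_{k+1}\!\bigl(\tfrac{4\pi}{c\lambda}\sqrt{(n_1-\kappa_{\alpha_1})(l+\kappa_j)}\bigr)$. The factor $(l+\kappa_j)^{(k+1)/2}$ kills the term with $l+\kappa_j=0$, so there is no constant term, and at every cusp inequivalent to $i\infty$ the series vanishes; hence conditions (1) and (2) hold for the $f$-part, and we put $f_{n_1,\alpha_1,\chi,\rho}:=P_{n_1,\alpha_1,\chi,\rho}$.

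\emph{The family $G_{n_2,\alpha_2,\bar\chi,\bar\rho}$ and the operators.} Applying the vector-valued $H$-group supplementary function theory of the previous section to the cuspidal Poincar\'e series $P_{n_2',\alpha_2,\chi,\rho}$ of weight $k+2$ yields its supplementary function $G_{n_2,\alpha_2,\bar\chi,\bar\rho}\in H^*_{-k,\bar\chi,\bar\rho}(\Gamma)$: it has weight $-k$, character $\bar\chi$, type $\bar\rho$; its holomorphic part has principal part exactly $e^{2\pi i(-n_2+\kappa'_{\alpha_2})\tau/\lambda}\mbf{e}_{\alpha_2}$ and vanishes at the cusps inequivalent to $i\infty$; and $\xi_{-k}G_{n_2,\alpha_2,\bar\chi,\bar\rho}=\tfrac{(-4\pi)^{k+1}}{\Gamma(k+1)}\bigl(\tfrac{-n_2+\kappa'_{\alpha_2}}{\lambda}\bigr)^{k+1}P_{n_2',\alpha_2,\chi,\rho}$, the passage $n_2\mapsto n_2'$ and the constant emerging when $\xi_{-k}$ is applied to the incomplete-gamma terms of the nonholomorphic part ($\xi_{-k}$ converting the $\kappa'_j$-frequencies attached to $\bar\rho$ into the $\kappa_j$-frequencies attached to $\rho$). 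This establishes conditions (1) and (2) for the $G$-part and the $\xi_{-k}$-identity. For the $D^{k+1}$-identity, $D^{k+1}$ annihilates the nonholomorphic part of a weight $-k$ harmonic Maass form, so $D^{k+1}G_{n_2,\alpha_2,\bar\chi,\bar\rho}=D^{k+1}G^+_{n_2,\alpha_2,\bar\chi,\bar\rho}$ is a weakly holomorphic weight $k+2$ form of character $\bar\chi$, type $\bar\rho$, with principal part $\bigl(\tfrac{-n_2+\kappa'_{\alpha_2}}{\lambda}\bigr)^{k+1}e^{2\pi i(-n_2+\kappa'_{\alpha_2})\tau/\lambda}\mbf{e}_{\alpha_2}$ and vanishing at the other cusps; by the previous paragraph applied with the character $\bar\chi$ this form is $\bigl(\tfrac{-n_2+\kappa'_{\alpha_2}}{\lambda}\bigr)^{k+1}P_{n_2,\alpha_2,\bar\chi,\bar\rho}$, and comparing Fourier coefficients gives $b_{n_2,\alpha_2,\bar\chi,\bar\rho}(m,j)=\bigl(\tfrac{-n_2+\kappa'_{\alpha_2}}{m+\kappa'_j}\bigr)^{k+1}a_{n_2,\alpha_2,\bar\chi,\bar\rho}(m,j)$ for $m+\kappa'_j>0$, where $a_{n_2,\alpha_2,\bar\chi,\bar\rho}$ denotes the Fourier coefficients of $P_{n_2,\alpha_2,\bar\chi,\bar\rho}$.

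\emph{The duality (3) --- the main obstacle.} Substituting the last identity (with $m=n_1-(\kappa_{\alpha_1}+\kappa'_{\alpha_1})$, $j=\alpha_1$, so that $m+\kappa'_{\alpha_1}=n_1-\kappa_{\alpha_1}$) turns (3) into the statement that the coefficient of $P_{n_1,\alpha_1,\chi,\rho}$ at index $(n_2-(\kappa_{\alpha_2}+\kappa'_{\alpha_2}),\alpha_2)$ equals $-\bigl(\tfrac{-n_2+\kappa'_{\alpha_2}}{n_1-\kappa_{\alpha_1}}\bigr)^{k+1}$ times the coefficient of $P_{n_2,\alpha_2,\bar\chi,\bar\rho}$ at index $(n_1-(\kappa_{\alpha_1}+\kappa'_{\alpha_1}),\alpha_1)$ --- a relation between the Fourier coefficients of two holomorphic Poincar\'e series of weight $k+2$, which I would verify from the Petersson formula above: the two Bessel arguments coincide because $\sqrt{(n_1-\kappa_{\alpha_1})(n_2-\kappa'_{\alpha_2})}$ is symmetric in the two data, the Kloosterman-type sums for $(\bar\chi,\bar\rho)$ are --- after the reflection accompanying the swap of seed index and evaluation index --- the complex conjugates of those for $(\chi,\rho)$ so the two $c$-sums are equal, and the residual powers of $n_1-\kappa_{\alpha_1}$ and $n_2-\kappa'_{\alpha_2}$, together with the $i^{-(k+2)}$ and the normalizing constants, reassemble into precisely the asserted factor and sign. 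Conceptually the same identity is read off Stokes' theorem applied to the $\Gamma$-invariant $1$-form $\langle f_{n_1,\alpha_1,\chi,\rho}(\tau),\,G_{n_2,\alpha_2,\bar\chi,\bar\rho}(\tau)\rangle\,d\tau$ on $X_\Gamma$, $\langle\,\cdot\,,\,\cdot\,\rangle$ being the bilinear pairing on $\CC^p$ under which $\langle\rho(\gamma)v,\bar\rho(\gamma)w\rangle=\langle v,w\rangle$: the cusps inequivalent to $i\infty$ contribute nothing by (1), the term at $i\infty$ is proportional to the constant term of $\langle f_{n_1},G^+_{n_2}\rangle$, namely $a_{n_1,\alpha_1,\chi,\rho}(n_2-(\kappa_{\alpha_2}+\kappa'_{\alpha_2}),\alpha_2)+b_{n_2,\alpha_2,\bar\chi,\bar\rho}(n_1-(\kappa_{\alpha_1}+\kappa'_{\alpha_1}),\alpha_1)$, and the interior integral returns a Petersson-type pairing of $f_{n_1,\alpha_1,\chi,\rho}$ with the cusp form $\xi_{-k}G_{n_2,\alpha_2,\bar\chi,\bar\rho}$; this is the structural source of the commutative diagram of the introduction, but the sign bookkeeping is cleanest through the explicit Poincar\'e coefficients, which I expect to be the laborious step.

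\emph{Uniqueness.} Given two vector-valued grids of the same weight, character and type, the difference of the two candidates for $G_{n_2,\alpha_2,\bar\chi,\bar\rho}$ lies in $H^*_{-k,\bar\chi,\bar\rho}(\Gamma)$ and has vanishing principal part at every cusp --- equal principal parts at $i\infty$ by (2), vanishing at the others by (1) --- so by the Bruinier--Funke pairing \cite{BF} the Petersson norm of its image under $\xi_{-k}$ is $0$ and that image vanishes; hence the difference is a holomorphic modular form of the negative weight $-k$, therefore $0$, so the two $G$-families agree. Then (3), together with (1) and (2), expresses every Fourier coefficient of $f_{n_1,\alpha_1,\chi,\rho}$ in terms of the now-determined coefficients of $G^+_{n_2,\alpha_2,\bar\chi,\bar\rho}$, so the two $f$-families agree as well, which completes the proof.
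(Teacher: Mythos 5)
Your architecture is the same as the paper's --- $f_{n_1,\alpha_1,\chi,\rho}=P_{n_1,\alpha_1,\chi,\rho}$, a harmonic companion $G_{n_2,\alpha_2,\bar\chi,\bar\rho}$ built from the supplementary-function theory, duality by an explicit Poincar\'e-coefficient identity, uniqueness via the Bruinier--Funke pairing --- but two steps in the middle do not go through as written.

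First, you identify $D^{k+1}G_{n_2,\alpha_2,\bar\chi,\bar\rho}$ with $\bigl(\frac{-n_2+\kappa'_{\alpha_2}}{\lambda}\bigr)^{k+1}P_{n_2,\alpha_2,\bar\chi,\bar\rho}$ solely because both are weakly holomorphic of weight $k+2$, vanish at the cusps inequivalent to $i\infty$, and have the same principal part at $i\infty$. That data does not determine a weakly holomorphic form: two such forms may differ by any holomorphic form of weight $k+2$ vanishing at the other cusps, in particular by any cusp form. Consequently the coefficient relation $b_{n_2,\alpha_2,\bar\chi,\bar\rho}(m,j)=\bigl(\frac{-n_2+\kappa'_{\alpha_2}}{m+\kappa'_j}\bigr)^{k+1}a_{n_2,\alpha_2,\bar\chi,\bar\rho}(m,j)$, on which your whole verification of condition (3) rests, is not established. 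The paper avoids this by \emph{defining} $G^+_{n_2,\alpha_2,\bar\chi,\bar\rho}=\bigl(\frac{-n_2+\kappa'_{\alpha_2}}{\lambda}\bigr)^{k+1}\mc{E}^H_{P_{n_2,\alpha_2,\bar\chi,\bar\rho}}$ and $G^-_{n_2,\alpha_2,\bar\chi,\bar\rho}=-\bigl(\frac{-n_2+\kappa'_{\alpha_2}}{\lambda}\bigr)^{k+1}\mc{E}^N_{P_{n_2',\alpha_2,\chi,\rho}}$, so that the $D^{k+1}$-identity and the coefficient relation are termwise immediate, while modularity of the sum is exactly the period identity $r^H(P_{n'},\gamma;\tau)=r^N(P_n,\gamma;\tau)$ of Theorems \ref{suppleperiod} and \ref{HNperiod}. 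Your version could be repaired by adding that both $D^{k+1}G$ and $P_{n_2,\alpha_2,\bar\chi,\bar\rho}$ are orthogonal to cusp forms in the regularized sense, but some such argument is needed.

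Second, your reduction of the duality to a symmetry of Poincar\'e coefficients silently excludes the case $n_1-\kappa_{\alpha_1}=0$, which the grid definition includes (it is the analogue of the $F_0(0;\tau)$ row in Zagier's example). There the relevant coefficient of $G^+_{n_2,\alpha_2,\bar\chi,\bar\rho}$ is its \emph{constant} term, which $D^{k+1}$ annihilates, so your substitution $b=\bigl(\frac{-n_2+\kappa'_{\alpha_2}}{m+\kappa'_j}\bigr)^{k+1}a$ degenerates and gives no information about it. The paper treats this case separately: the constant term of $G^+$ is the explicit quantity $c_{P_{n',\alpha,\bar\chi,\bar\rho}}$ of (\ref{cf}) produced by the Eichler-integral construction, and it is matched against case (2) of Theorem \ref{fourierpoincare} for $P_{n_1,\alpha_1,\chi,\rho}$ with $-n_1+\kappa_{\alpha_1}=0$. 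Relatedly, the ``reflection'' you invoke for the positive-index case is not a complex conjugation of Kloosterman sums but the substitution $\gamma\mapsto\gamma^{-1}(-I_2)$ on the double-coset set $C^+$, combined with unitarity of $\rho$ and the non-triviality condition $\chi(-I_2)=e^{\pi i k}$; this is where the minus sign in condition (3) comes from, so the deferred bookkeeping is not optional.
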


\begin{rmk} We remark the following.
\begin{enumerate}
\item In particular,  if we let $p=1$ and $\rho$ a trivial representation, then we 
    have only one component.    
      Therefore,  we  obtain the Zagier duality for (scalar-valued) modular forms. In this case, we omit the $\rho$ and $\alpha$ in the notation for convenience.

\item In Theorem \ref{main1}, we showed that there is a vector-valued grid of integer weight if $\rho$ satisfies $\rho(-I_2) = I_p$, but, the Weil representation does not satisfy this condition. Actually, if $\rho$ is the Weil representation, then $\rho(-I_2)$ interchanges $e_{\alpha}$ and $e_{-\alpha}$. In this case, we can  see that we get the same result as in Theorem \ref{main1} with a modified definition of a grid: $f_{n_1,\alpha_1,\chi,\rho}(\tau)\in M^!_{k+2,\chi,\rho}(\Gamma)$ and $G^+_{n_2,\alpha_2,\bar{\chi},\bar{\rho}}(\tau)$ have expansions of the form at $i\infty$
\begin{eqnarray*}
f_{n_1,\alpha_1,\chi,\rho}(\tau) &=& e^{2\pi i(-n_1+\kappa_{\alpha_1})\tau/\lambda}\mbf{e}_{\alpha_1} + e^{2\pi i(-n_1+\kappa_{\alpha_1})\tau/\lambda}\mbf{e}_{-\alpha_1}\\
&&+ \sum_{j=1}^p \sum_{l+\kappa_j>0} a_{n_1,\alpha_1}(l,j) e^{2\pi i(l+\kappa_j)z/\lambda}\mbf{e}_j,
\end{eqnarray*}
and
\begin{eqnarray*}
G^+_{n_2,\alpha_2,\bar{\chi},\bar{\rho}}(\tau) &=& e^{2\pi i(-n_2+\kappa'_{\alpha_2})\tau/\lambda}\mbf{e}_{\alpha_2} +e^{2\pi i(-n_2+\kappa'_{\alpha_2})\tau/\lambda}\mbf{e}_{-\alpha_2}\\
&& + \sum_{j=1}^p \sum_{l+\kappa'_j\geq0} b_{n_2,\alpha_2}(l,j) e^{2\pi i(l+\kappa'_j)z/\lambda}\mbf{e}_j.
\end{eqnarray*}

\item  In \cite{Zag2}, Zagier defined mock modular forms and
the definition of Zagier implies that holomorphic parts of harmonic weak Maass forms are mock modular forms. So we can also define a grid in terms of mock modular forms. For example, if $\chi$ is trivial, then  $G^+_{n_2,\alpha_2,\bar{\chi},\bar{\rho}}(\tau)$ is a mock modular form such that its shadow is the Poincar\'e series of order $n_2$, $P_{-n_2,\alpha_2,\chi,\rho}(\tau)$, and $D^{k+1}(G^+_{n_2,\alpha_2,\bar{\chi},\bar{\rho}})(\tau)$ is the Poincar\'e series of order $-n_2$, $P_{n_2,\alpha_2,\bar{\chi},\bar{\rho}}(\tau)$.
\end{enumerate}
\end{rmk}

Now we know that the Zagier duality is closely related to the theory of mock modular forms. Thus it is natural to consider the symmetries coming from their shadows.
From Theorem \ref{main1} the shadows come from the Poincar\'e series $P_{n_2', \alpha_2,\chi,\rho}(\tau)$ with the same weight.
Thus the following corollary immediately follows.

\begin{cor} \label{main2}
Let $\Gamma, k, \chi$ and $\rho$ be given as in Theorem \ref{main1}.
Suppose that
\begin{eqnarray*}
G_{n_2,\alpha_2,\bar{\chi},\bar{\rho}}^-(\tau) &=& \sum_{j=1}^p \sum_{l+\kappa'_j<0} b^-_{n_2,\alpha_2,\bar{\chi},\bar{\rho}}(l,j)H(2\pi (l+\kappa_j')v/\lambda)e^{2\pi i(l+\kappa'_j)u/\lambda}\mbf{e}_j
\end{eqnarray*}
and
\begin{eqnarray*}
G_{\tilde{n}_2,\tilde{\alpha}_2,\bar{\chi},\bar{\rho}}^-(\tau) &=& \sum_{j=1}^p \sum_{l+\kappa'_j<0} \tilde{b}^-_{\tilde{n}_2,\tilde{\alpha}_2,\bar{\chi},\bar{\rho}}(l,j)H(2\pi (l+\kappa_j')v/\lambda)e^{2\pi i(l+\kappa'_j)u/\lambda}\mbf{e}_j
\end{eqnarray*}
are two non-holomorphic parts of harmonic weak Maass forms in a grid. Then we have
\[b^-_{n_2,\alpha_2,\bar{\chi},\bar{\rho}}(-\tilde{n}_2,\tilde{\alpha}_2)(-\tilde{n}_2+\kappa'_{\tilde{\alpha}_2})^{k+1} = \overline{b^-_{\tilde{n}_2,\tilde{\alpha}_2,\bar{\chi},\bar{\rho}}(-n_2,\alpha_2)}(-n_2+\kappa'_{\alpha_2})^{k+1}\]
for every indices $n_2$ and $\tilde{n}_2$ satisfying $n_2-\kappa'_{\alpha_2}>0$ and $\tilde{n}_2-\kappa'_{\tilde{\alpha}_2}>0$.
\end{cor}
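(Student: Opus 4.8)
The plan is to pin down $\xi_{-k}(G_{n_2,\alpha_2,\bar\chi,\bar\rho})$ in two different ways and compare Fourier coefficients; note first that, unlike the duality in part~(3) of the definition of a grid, which couples the \emph{holomorphic} parts, the relation to be proved concerns the non-holomorphic parts and so must be extracted through $\xi_{-k}$. Since $\xi_{-k}$ annihilates the holomorphic part, $\xi_{-k}(G_{n_2,\alpha_2,\bar\chi,\bar\rho}) = \xi_{-k}(G^-_{n_2,\alpha_2,\bar\chi,\bar\rho})$, and the first task is to evaluate this explicitly. Differentiating $H(w) = e^{-w}\int_{-2w}^{\infty}e^{-t}t^{k}\,dt$ under the integral sign gives $H(w)+H'(w) = 2e^{w}(-2w)^{k}$. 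Applying $\xi_{-k} = 2iv^{-k}\overline{\partial/\partial\bar\tau}$ term by term with $\partial/\partial\bar\tau = \tfrac12(\partial_u+i\partial_v)$, using this identity, and noting $e^{2\pi i\mu u}e^{2\pi\mu v} = e^{2\pi i\mu\bar\tau}$ for $\mu = (l+\kappa'_j)/\lambda<0$, I obtain
\[
\xi_{-k}(G_{n_2,\alpha_2,\bar\chi,\bar\rho})(\tau) = -\sum_{j=1}^{p}\sum_{l+\kappa'_j<0}\left(\frac{-4\pi(l+\kappa'_j)}{\lambda}\right)^{k+1}\overline{b^-_{n_2,\alpha_2,\bar\chi,\bar\rho}(l,j)}\;e^{2\pi i(-l-\kappa'_j)\tau/\lambda}\mbf{e}_j .
\]
In particular, with $l = -\tilde n_2$ and $j = \tilde\alpha_2$, the coefficient of $e^{2\pi i(\tilde n_2-\kappa'_{\tilde\alpha_2})\tau/\lambda}\mbf{e}_{\tilde\alpha_2}$ in $\xi_{-k}(G_{n_2,\alpha_2,\bar\chi,\bar\rho})$ is $-\bigl(4\pi(\tilde n_2-\kappa'_{\tilde\alpha_2})/\lambda\bigr)^{k+1}\overline{b^-_{n_2,\alpha_2,\bar\chi,\bar\rho}(-\tilde n_2,\tilde\alpha_2)}$.

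Next I would invoke Theorem~\ref{main1}, which gives $\xi_{-k}(G_{n_2,\alpha_2,\bar\chi,\bar\rho}) = \frac{(-4\pi)^{k+1}}{\Gamma(k+1)}\bigl(\frac{-n_2+\kappa'_{\alpha_2}}{\lambda}\bigr)^{k+1}P_{n_2',\alpha_2,\chi,\rho}$. From the definition of $n_2'$ and the relation between the $\kappa_j$ and the $\kappa'_j$ one checks $-n_2'+\kappa_{\alpha_2} = n_2-\kappa'_{\alpha_2}>0$, so $P_{n_2',\alpha_2,\chi,\rho}$ is a \emph{holomorphic} Poincar\'e series, lying in $S_{k+2,\chi,\rho}(\Gamma)$ (since $\xi_{-k}$ sends $H^*_{-k,\bar\chi,\bar\rho}$ into that space), with principal term $e^{2\pi i(n_2-\kappa'_{\alpha_2})\tau/\lambda}\mbf{e}_{\alpha_2}$; likewise $P_{\tilde n_2',\tilde\alpha_2,\chi,\rho}$ has principal term $e^{2\pi i(\tilde n_2-\kappa'_{\tilde\alpha_2})\tau/\lambda}\mbf{e}_{\tilde\alpha_2}$. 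Reading off the coefficient of $e^{2\pi i(\tilde n_2-\kappa'_{\tilde\alpha_2})\tau/\lambda}\mbf{e}_{\tilde\alpha_2}$ in this formula and comparing with the previous paragraph (the powers of $4\pi/\lambda$ cancel, and the sign $(-1)^{k+1}$ from $(-4\pi)^{k+1}$ cancels the one from $(-n_2+\kappa'_{\alpha_2})^{k+1}$) gives
\[
\Gamma(k+1)\,(\tilde n_2-\kappa'_{\tilde\alpha_2})^{k+1}\,\overline{b^-_{n_2,\alpha_2,\bar\chi,\bar\rho}(-\tilde n_2,\tilde\alpha_2)} = -(n_2-\kappa'_{\alpha_2})^{k+1}\,c_{n_2',\alpha_2},
\]
where $c_{n_2',\alpha_2}$ is the coefficient of $e^{2\pi i(\tilde n_2-\kappa'_{\tilde\alpha_2})\tau/\lambda}\mbf{e}_{\tilde\alpha_2}$ in $P_{n_2',\alpha_2,\chi,\rho}$, i.e. its Fourier coefficient at the principal index of $P_{\tilde n_2',\tilde\alpha_2,\chi,\rho}$.

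The last ingredient is the symmetry of the Fourier coefficients of holomorphic Poincar\'e series: if $c_{\tilde n_2',\tilde\alpha_2}$ denotes dually the coefficient of $e^{2\pi i(n_2-\kappa'_{\alpha_2})\tau/\lambda}\mbf{e}_{\alpha_2}$ in $P_{\tilde n_2',\tilde\alpha_2,\chi,\rho}$, then $(n_2-\kappa'_{\alpha_2})^{k+1}\,c_{n_2',\alpha_2} = (\tilde n_2-\kappa'_{\tilde\alpha_2})^{k+1}\,\overline{c_{\tilde n_2',\tilde\alpha_2}}$. This is the Hermitian symmetry of $\langle P_{n_2',\alpha_2,\chi,\rho},P_{\tilde n_2',\tilde\alpha_2,\chi,\rho}\rangle$ combined with the reproducing property of Poincar\'e series for the Petersson inner product (the reproducing constant being real and proportional to the $-(k+1)$-st power of the principal exponent), or equivalently the identity $\overline{\mathcal K_{\alpha\beta}(m,n;c)} = \mathcal K_{\beta\alpha}(n,m;c)$ for the generalized Kloosterman sums attached to the unitary representation $\rho$, which underlies the Fourier expansions computed in the course of Theorem~\ref{main1}. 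Writing the displayed identity of the second paragraph once as stated and once with $(n_2,\alpha_2)$ and $(\tilde n_2,\tilde\alpha_2)$ interchanged, substituting this symmetry and taking complex conjugates, and cancelling the common factors $\Gamma(k+1)$ and $(-1)^{k+1}$ (note $(-\tilde n_2+\kappa'_{\tilde\alpha_2})^{k+1} = (-1)^{k+1}(\tilde n_2-\kappa'_{\tilde\alpha_2})^{k+1}$, and similarly for the other index) produces exactly the asserted relation. The main obstacle is purely one of bookkeeping — keeping the complex conjugations and the sign $(-1)^{k+1}$ straight through the $\xi_{-k}$ computation and, above all, through the Poincar\'e-coefficient symmetry with its conjugation; once that symmetry is at hand from the proof of Theorem~\ref{main1}, the corollary is indeed essentially formal, as its being called an immediate consequence indicates.
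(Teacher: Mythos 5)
Your proposal is correct and follows essentially the same route as the paper: it uses the formula $\xi_{-k}G_{n_2,\alpha_2,\bar{\chi},\bar{\rho}} = \frac{(-4\pi)^{k+1}}{\Gamma(k+1)}\bigl(\frac{-n_2+\kappa'_{\alpha_2}}{\lambda}\bigr)^{k+1}P_{n_2',\alpha_2,\chi,\rho}$ from Theorem \ref{main1}, the termwise computation $\xi_{-k}\bigl(H(2\pi(l+\kappa'_j)v/\lambda)e^{2\pi i(l+\kappa'_j)u/\lambda}\bigr) = -\bigl(\frac{-4\pi(l+\kappa'_j)}{\lambda}\bigr)^{k+1}e^{-2\pi i(l+\kappa'_j)\tau/\lambda}$, and the Hermitian symmetry of the Petersson inner product via the reproducing property of Poincar\'e series (Lemma \ref{petersson}), exactly the three ingredients of the paper's argument. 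The sign and conjugation bookkeeping checks out, so no gap.
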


%\begin{cor} Let $k$ be a positive integer.
%\begin{enumerate}
%\item[(1)] There exists a unique (scalar-valued) grid of weight $k+2$ and multiplier system $\chi$. In particular, $f_{n_1,\chi}(\tau)$  can be given as a Poincar\'e series
%\begin{equation} \label{dfnoff}
%f_{n_1,\chi}(\tau) := \frac12\sum_{\gamma} \frac{e^{2\pi i(-n_1+\kappa)\gamma \tau/\lambda}}{\chi(\gamma)(c\tau+d)^{k+2}},
%\end{equation}
%where $\gamma= \sm *&*\\c&d\esm$ runs through a complete set of elements of $\Gamma$ with distinct lower row.  Moreover,
%\[(D^{k+1} G_{n_2,\bar{\chi}})(\tau) =  \biggl( \frac{-n_2+\kappa'}{\lambda}\biggr)^{k+1}f_{n_2,\bar{\chi}}(\tau),\]
%and
%\[(\xi_{-k}G_{n_2,\bar{\chi}})(\tau) = \frac{(-4\pi)^{k+1}}{\Gamma(k+1)}\biggl( \frac{-n_2+\kappa'}{\lambda}\biggr)^{k+1}f_{n'_2,\chi}(\tau).\]

%\item[(2)] Let
%\begin{eqnarray*}
%G_{n_2,\bar{\chi}}^-(\tau) &=&  \sum_{l+\kappa_j<0} b_{n_2}^-(l)H(2\pi (l+\kappa')v/\lambda)e^{2\pi i(n+\kappa')u/\lambda}\\
%G_{\tilde{n}_2,\bar{\chi}}^-(\tau) &=&  \sum_{l+\kappa'<0} \tilde{b}^-_{\tilde{n}_2}(l)H(2\pi (l+\kappa')v/\lambda)e^{2\pi i(n+\kappa')u/\lambda}
%\end{eqnarray*}
%be two non-holomorphic parts of harmonic weak Maass forms in a grid. Then we have
%\[b^-_{n_2}(-\tilde{n}_2)(-\tilde{n}_2+\kappa')^{k+1} = \overline{b^-_{\tilde{n}_2}(-n_2)}(-n_2+\kappa')^{k+1}\]
%for every $n_2,\tilde{n}_2$.
%\end{enumerate}
%\end{cor}

The study of $L$-functions has been a central theme in number theory since the pioneering work of Riemann when the first instances of a prime number theorem were formulated.
Especially, the theory of modular forms is very closely related to $L$-functions. For example, the special values of $L$-functions associated to modular forms are important objects in the conjectures posed by Birch-Swinnerton-Dyer, Beilinson, and Bloch-Kato.
     Recently, Bringmann, Fricke and Kent \cite{BFK} found the definition of $L$-functions  for weakly holomorphic modular forms  by resolving the difficulties due to the issues coming from their bad asymptotic behaviors
    at cusps.
 We denote by $L(f,\zeta_{c\lambda}^{-d},s)$ the twisted $L$-function of a weakly holomorphic modular form $f(\tau)$  (for the precise definition, see section \ref{section3}).
 In the following theorem,  with the definition of $L$-functions associated to weakly holomorphic modular forms,
 we prove  that the critical values of twisted $L$-functions of mock modular forms appearing in a grid satisfy a quadric relation and the Fourier coefficients of their shadows can be written in terms of these critical values.
% To state the result we introduce some of the notation.
% A form $f(\tau)\in M^!_{k+2,\chi}(\Gamma)$ is a {\it{weakly holomorphic cusp form}} if its constant term vanishes. Let $S_{k+2,\chi}^!(\Gamma)$ denote the space of weakly holomorphic cusp forms. Suppose that $f(\tau) = \sum_{n\gg-\infty}a(n)e^{2\pi i(n+\kappa)\tau/\lambda} \in S_{k+2,\chi}^!(\Gamma)$. Then Bringmann, Ficke and Kent \cite{BFK} defined {\it{twisted $L$-function}} of $f(\tau)$  denoted by $L(f,\zeta_{c\lambda}^{-d},s)$ for $\gamma = \sm a&b\\c&d\esm\in \Gamma$ (for the definition see section \ref{section3}).

\begin{thm} \label{main3}
Assume that $\Gamma, k$ and $\chi$ are given as in Theorem \ref{main1}.
Let $\gamma_1 = \sm a_1&b_1\\c_1&d_1\esm ,\cdots, \gamma_t= \sm a_t&b_t\\c_t&d_t\esm$ be generators of $\Gamma$.
Suppose that $P_{n_i,\chi}(\tau)$  is a Poincar\'e series which is a cusp form, defined as
in (\ref{dfnofpoincareseries}).
We write its Fourier expansion as
\[P_{n_i,\chi}(\tau) = \sum_{l+\kappa>0} c_{n_i,\chi}(l)e^{2\pi i(l+\kappa)\tau/\lambda},\]
for $i=1,\ 2$.
Then there exist   complex constants $A_{\alpha,\beta}(i,j),\ B_{\alpha}(i),\ C_{\beta}(j),\ D,\ 0\leq \alpha,\beta\leq k, 1\leq i,j\leq t$, such that
\begin{eqnarray*}
(-n_2+\kappa)^{-(k+1)}c_{n_1,\chi}(-n_2)&=& \sum_{1\leq i,j\leq t}\sum_{0\leq\alpha,\beta\leq k}A_{\alpha,\beta}(i,j)\overline{L(f_{n_1',\bar{\chi}},\zeta_{c_j\lambda}^{-d_j},\alpha+1)}L(f_{n_2', \bar{\chi}},\zeta_{c_j\lambda}^{-d_j},\beta+1)\\
&&+\delta_{\kappa,0}n_2^{-(k+1)}b_{-n_2,\bar{\chi}}(0)\sum_{i=1}^t\sum_{\alpha=0}^k B_\alpha(i)
\overline{L(f_{n_1', \bar{\chi}},\zeta_{c_i\lambda}^{-d_i},\alpha+1)}\\
&&+\delta_{\kappa,0}n_1^{-(k+1)}\overline{b_{-n_1,\bar{\chi}}(0)}\sum_{j=1}^t\sum_{\beta=0}^k  C_\beta(j)  L(f_{n_2', \bar{\chi}},\zeta_{c_j\lambda}^{-d_j},\beta+1)\\
&&+ \delta_{\kappa,0}D(n_1n_2)^{-(k+1)}\overline{b_{-n_1,\bar{\chi}}(0)}b_{-n_2,\bar{\chi}}(0)
\end{eqnarray*}
for every integer $n_1,n_2$ such that $-n_1+\kappa, -n_2+\kappa>0$. Here, $b_{-n,\bar{\chi}}(0)$ is the $0$-th Fourier coefficient of $G^+_{-n,\bar{\chi}}(\tau)$. 
%given by
%\[ \frac{2\pi}{\lambda}\sum_{\gamma=\sm a&b\\c&d\esm\in <T>\setminus\Gamma/<T>\atop c>0} \frac{i^k}{c(k+1)!}\chi(\gamma)e^{\frac{2\pi i}{c\lambda}(-n'+\kappa')a}\biggl(\frac{2\pi n}{c\lambda}\biggr)^{k+1}.\]
\end{thm}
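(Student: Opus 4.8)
The plan is to express the left-hand quantity as a residue integral and then evaluate it in two ways.  Recall from Theorem \ref{main1} that the Poincar\'e series $P_{n_1,\chi}$ and $P_{n_2,\chi}$ (both cusp forms by hypothesis) arise as $D^{k+1}$-images or $\xi_{-k}$-images of harmonic weak Maass forms in the grid, and that the Fourier coefficient $c_{n_1,\chi}(-n_2)$ is, up to the normalizing factor $(-n_2+\kappa)^{k+1}$, essentially the Fourier coefficient of $f_{n_1',\bar\chi}$ at the principal term of $P_{n_2,\chi}$; by the Zagier duality (condition (3) in the definition of a grid) this is the same, up to sign and the explicit constants in Theorem \ref{main1}, as pairing $P_{n_1,\chi}$ against the principal part of $G^+_{n_2,\bar\chi}$.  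So the first step is to write
\[
(-n_2+\kappa)^{-(k+1)}c_{n_1,\chi}(-n_2) = \textrm{(const)}\cdot\int_{\mathcal C} P_{n_1,\chi}(\tau)\, \overline{G_{n_2,\bar\chi}(\tau)}\, v^{k}\, d\mu(\tau)
\]
or, more precisely, as a period integral of $P_{n_1,\chi}$ against the Eichler integral / non-holomorphic Eichler integral attached to $G_{n_2,\bar\chi}$, using that $\xi_{-k}G_{n_2,\bar\chi}$ is a multiple of a Poincar\'e series (Theorem \ref{main1}) and unfolding.

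The second step is to convert this integral into a sum of period polynomials over the generators $\gamma_1,\dots,\gamma_t$ of $\Gamma$.  Writing the Eichler integral $\mathcal E_{n_2}(\tau) = \int_\tau^{i\infty} G^+_{n_2,\bar\chi}(z)(z-\tau)^k\,dz$ (suitably regularized in the weakly holomorphic setting, following Bringmann–Fricke–Kent \cite{BFK}), the transformation behavior $\mathcal E_{n_2}|_{-k}(\gamma-1)$ is a polynomial of degree $\le k$ whose coefficients are precisely the twisted $L$-values $L(f_{n_2',\bar\chi},\zeta_{c\lambda}^{-d},\beta+1)$ for $0\le\beta\le k$ — this is exactly the content of the definition of these $L$-functions in section \ref{section3}.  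A contour-integral / Stokes argument (cut $\mathbb H/\Gamma$ along a fundamental polygon with sides paired by the $\gamma_i$) then expresses the Petersson-type pairing above as a bilinear expression $\sum_{i,j}\sum_{\alpha,\beta} A_{\alpha,\beta}(i,j)\,\overline{(\textrm{period of }f_{n_1'})}\,(\textrm{period of }f_{n_2'})$, with the periods being the asserted $L$-values; this is the classical Haberland-type formula, now in a vector-valued, weakly-holomorphic, $H$-group guise.  The constants $A_{\alpha,\beta}(i,j)$ are combinatorial (binomial coefficients coming from expanding $(z-\tau)^k$ and the cocycle relation) together with the entries of the matrices $\gamma_i$; they do not depend on $n_1,n_2$, which is what the theorem claims.

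The third step handles the boundary/cusp contributions, which is where the $\delta_{\kappa,0}$ terms enter.  When $\kappa = 0$ the forms $f_{n'}$ and the Eichler integrals have a constant term, and the regularization at $i\infty$ in \cite{BFK} produces extra contributions proportional to the constant Fourier coefficient $b_{-n,\bar\chi}(0)$ of $G^+_{-n,\bar\chi}$; carefully tracking these across the fundamental-polygon boundary gives the three correction sums with coefficients $B_\alpha(i)$, $C_\beta(j)$, and the scalar $D$.  When $\kappa>0$ there is no constant term and these terms vanish, consistent with the $\delta_{\kappa,0}$.  The final sub-step is simply to collect everything, identify each period with the correct $L$-value via section \ref{section3}, and name the resulting absolute constants.

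The main obstacle I expect is the regularization at the cusp $i\infty$: since $P_{n_1,\chi}$ is a cusp form but $f_{n_1',\bar\chi}$ and the Eichler integral of $G^+_{n_2,\bar\chi}$ are only weakly holomorphic (exponential growth at $i\infty$), the period integral does not converge naively and one must use the \cite{BFK} prescription (subtracting the principal part, or deforming the contour and taking a limit / constant term).  Making the Stokes/Haberland argument rigorous in that regularized setting — in particular showing that the cross terms between principal parts organize exactly into the $\delta_{\kappa,0}$ corrections and that nothing depends on the choice of fundamental polygon — is the delicate point; the purely algebraic bookkeeping over the generators $\gamma_i$ and the vector-valued indices $\alpha_i$ is routine by comparison.
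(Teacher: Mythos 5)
Your overall architecture --- compute a pairing two ways, one side by unfolding to extract the Fourier coefficient $c_{n_1,\chi}(-n_2)$, the other side as a bilinear expression in period polynomials whose coefficients are the twisted $L$-values --- matches the paper's, but there is a genuine gap at the step where the $L$-values of the \emph{weakly holomorphic} forms $f_{n_1',\bar\chi}$, $f_{n_2',\bar\chi}$ are supposed to appear. The paper never forms a regularized pairing involving a weakly holomorphic object: it computes the ordinary, absolutely convergent Petersson product $(P_{n_1,\chi},P_{n_2,\chi})$ of the two \emph{cusp} forms (Lemma \ref{petersson} gives the left-hand side), transports it to the period side via the injective period map $\phi$ of Lemma \ref{periodspace}, and then invokes Theorem \ref{suppleperiod}, the supplementary-function identity $r^H(P_{n_l,\chi},\gamma;\tau)=[r^H(P^*_{n_l,\chi},\gamma;\bar\tau)]^-$, to replace the period polynomials of the cusp forms by those of their supplementary weakly holomorphic forms $P^*_{n_l,\chi}=f_{n_l',\bar\chi}$; only then does Theorem \ref{Lformula} convert coefficients into $L$-values, with the constant term $c_f$ producing exactly the $\delta_{\kappa,0}$ corrections. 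Your proposal never uses this supplementary period identity, and without it you are forced into the regularized period pairing of $P_{n_1,\chi}$ against the Eichler integral of $G^+_{n_2,\bar\chi}$ --- precisely the step you yourself flag as unresolved. That is the missing idea, not a technicality.

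A second, smaller point: you propose to derive the bilinear form $B_{\alpha,\beta}(i,j)$ by an explicit Haberland-type Stokes argument on a fundamental polygon. The theorem only asserts the \emph{existence} of constants independent of $n_1,n_2$, and the paper obtains them softly: the pairing $\{\phi(f),\phi(g)\}:=(f,g)$ is well defined on the period space $\mc{P}_{-k,\chi}(\Gamma)$ because $\phi$ is an isomorphism onto it (Lemma \ref{periodspace}), and any sesquilinear extension to all of $P^t_k$, written in the monomial basis $\bigl(\tau+\tfrac{d_j}{c_j}\bigr)^i\mbf{e}_j$, is given by a fixed matrix of constants. Carrying out an explicit Haberland formula for a general $H$-group in the regularized weakly holomorphic setting would be substantially harder and is not needed; conversely, if you did carry it out you would get explicit constants, which the paper does not attempt.
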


\begin{exa} Let $\Gamma = \Gamma_0(N)$ for some positive integer $N$ and let $\chi$ be a character given by $\eta$ function, i.e.,
\[\chi(\gamma) = \biggl(\frac{\eta(\gamma\tau)}{(c\tau+d)^{\frac12}\eta(\tau)}\biggr)^{2k}\]
for $\gamma = \sm a&b\\c&d\esm \in \SL_2(\ZZ)$,
where $\eta(\tau) = e^{\pi i\tau/12}\prod_{n=1}^\infty (1-e^{2\pi in\tau})$. In this case, we see that $T = \sm 1&1\\0&1\esm$ and $\chi(T) = e^{\pi ik/6}$. If $k\not\equiv 0\ (\m\ 12)$, then $\kappa \neq0$ for $\chi$. Then the formula in Theorem \ref{main3} reduces to
\begin{eqnarray*}
(-n_2+\kappa)^{(k+1)}c_{n_1,\chi}(-n_2)
= \sum_{1\leq i,j\leq t}\sum_{0\leq\alpha,\beta\leq k}A_{\alpha,\beta}(i,j)\overline{L(f_{n_1',\bar{\chi}},\zeta_{c_j\lambda}^{-d_j},\alpha+1)}L(f_{n_2', \bar{\chi}},\zeta_{c_j\lambda}^{-d_j},\beta+1).
\end{eqnarray*}
%Moreover, the coefficients $A_{\alpha,\beta}(i,j)$ can be described as the critical values of $L$-functions of the Poincar\'e series $P_{n_i,\chi}(\tau)$.
\end{exa}

%\begin{rmk} If $\rho$ is unitary, then $\delta = 0$. Furthermore, if $\Gamma=\SL_2(\ZZ)$, then our results can be extended to the case of $k=0$.
%\end{rmk}

The remainder of this paper is organized as follows. In section \ref{section2}, we  introduce the basic notions of vector-valued modular forms, derive the Fourier expansions of the Poincar\'e series and review the supplementary function theory. In section \ref{section3}, we describe $L$-functions of weakly holomorphic cusp forms and explain their relation to period polynomials. In section \ref{section4}, we prove the main results: Theorem \ref{main1}, Corollary \ref{main2},  and Theorem \ref{main3}.

\section{Vector-valued modular forms} \label{section2}

In this section, we introduce vector-valued modular forms and vector-valued harmonic weak Maass forms. Also, we recall the definitions and properties of two differential operators $\xi_{-k}$ and $D^{k+1}$, which are important to study vector-valued harmonic weak Maass forms. Moreover, we derive  the Fourier coefficients of vector-valued Poincar\'e series and recall the supplementary function theory.

\subsection{Vector-valued modular forms} \label{section2.1}
 We begin by introducing the definition of the vector-valued modular forms. Let $\Gamma$ be a $H$-$\it{group}$, i.e., a finitely generated Fuchsian group of the first kind which has at least one parabolic class. Let $k\in\ZZ$ and $\chi$ a (unitary) character. Thus $\chi(\gamma)$ is a complex number independent of $\tau$ such that
\begin{enumerate}
\item $|\chi(\gamma)| = 1$ for all $\gamma\in\Gamma$.
%\item $\chi$ satisfies the consistency condition
%\[\chi(\gamma_3)(c_3\tau+d_3)^k = \chi(\gamma_1)\chi(\gamma_2)(c_1\gamma_2\tau+d_1)^k (c_2\tau+d_2)^k,\]
%where $\gamma_3 = \gamma_1\gamma_2$ and $\gamma_i = \sm *&*\\ c_i&d_i\esm, i = 1,2$ and $3$.
\item $\chi$ satisfies the non-triviality condition:
$\chi(-I) = e^{\pi ik}$.
\end{enumerate}
%The $k$-th power is determined by the convention
%$w^k = |w|^ke^{ik\arg w}$,
%where $-\pi\leq \arg w<\pi$ for $0\neq w\in\CC$.
Let $p$ be a positive integer and $\rho:\Gamma\to \GL(p,\CC)$ a $p$-dimensional unitary complex representation.  We suppose that $\rho(B)$ is diagonal for all parabolic elements $B\in\Gamma$
 and $\rho(-I_2) = I_p$, where $I_j$ is the identity matrix of order $j\in\ZZ_{>0}$.  We denote the standard basis elements of the vector space $\CC^p$ by $\mbf{e}_j$ for $1\leq j\leq p$.

\begin{dfn} \label{dfnofvvm} A vector-valued weakly holomorphic modular form of weight $k$, multiplier system $\chi$ and type $\rho$ on $\Gamma$ is a sum $f(\tau) = \sum_{j=1}^p f_j(\tau)\mbf{e}_j$ of functions holomorphic in the complex upper half-plane $\HH$ satisfying the following conditions:
\begin{enumerate}
\item For all $\gamma \in\Gamma$, we have
$(f|_{k,\chi,\rho} \gamma)(\tau) = f(\tau)$.

\item For each parabolic cusp $q$, there is $\gamma_q = \sm a&b\\c&d\esm \in \SL_2(\RR)$ such that $\gamma_q\infty = q$. Then
each function $(c\tau+d)^{-k}f(\gamma_q\tau)$ has a Fourier expansion of the form
\[(c\tau+d)^{-k}f(\gamma_q\tau) = \sum_{j=1}^p\sum_{n\gg-\infty} a_{j,q}(n)e^{2\pi i(n+\kappa_{j,q})\tau/\lambda_{q}}\mbf{e}_j,\]
where $\kappa_{j,q}$ (resp. $\lambda_{q}$) is a constant which depends on $j$ and $q$ (resp. $q$).
\end{enumerate}
\end{dfn}

Here, the slash operator $|_{k,\chi,\rho} \gamma$ is defined by
\[(f|_{k,\chi,\rho}\gamma)(\tau) = \chi(\gamma)^{-1}(c\tau+d)^{-k}\rho^{-1}(\gamma)f(\gamma\tau)\]
for $\gamma = \sm a&b\\c&d\esm \in \Gamma$, where $\gamma\tau = \frac{a\tau+b}{c\tau+d}$.
The space of all vector-valued weakly holomorphic modular forms  of weight $k$, character $\chi$ and type $\rho$ on $\Gamma$ is denoted by $M^!_{k,\chi,\rho}(\Gamma)$. There are subspaces $M_{k,\chi,\rho}(\Gamma)$ and $S_{k,\chi,\rho}(\Gamma)$ of {\it{vector-valued\ holomorphic\ modular\ forms}} and {\it{vector-valued cusp forms}}, respectively, for which we require that each $a_{j,q}(n) = 0$ when $n+\kappa_{j,q}$ is negative, respectively, non-positive.
For a Fourier expansion $\sum_{j=1}^p\sum_{n\gg-\infty}a(n,j)e^{2\pi i(n+\kappa_j)\tau/\lambda}\mbf{e}_j$, the principal part is
 \[\sum_{j=1}^p\sum_{n+\kappa_j<0}a(n,j)e^{2\pi i(n+\kappa_j)\tau/\lambda}\mbf{e}_j\]
and the constant term is
\[\sum_{j=1}^p\sum_{n+\kappa_j=0}a(n,j)e^{2\pi i(n+\kappa_j)\tau/\lambda}\mbf{e}_j.\]

Now we give the definition of vector-valued harmonic weak Maass forms and related operators. To define vector-valued harmonic weak Maass forms, we need to introduce the weight $k$ hyperbolic Laplacian given by
\[\Delta_k := -v^2\biggl(\frac{\partial^2}{\partial u^2}+\frac{\partial^2}{\partial v^2}\biggr) + ikv\biggl(\frac{\partial}{\partial u}+i\frac{\partial}{\partial v}\biggr).\]

\begin{dfn} \label{dfnofhar} A vector-valued harmonic weak Maass form of weight $k$, character $\chi$ and type $\rho$ on $\Gamma$ is a sum $f(\tau) = \sum_{j=1}^p f_j(\tau)\mbf{e}_j$ of smooth functions on $\HH$ satisfying
\begin{enumerate}
\item[(1)] $(f|_{k,\chi,\rho}\gamma)(\tau) = f(\tau)$ for all $\gamma\in\Gamma$,
\item[(2)] $\Delta_k f =0$,
\item[(3)] there is a constant $C>0$ such that
\[(c\tau+d)^{-k}f_j(\gamma_q\tau) = O(e^{Cv})\]
as $v\to\infty$ $($uniformly in $u)$ for every integer $1\leq j\leq p$ and every element $\gamma_q=\sm a&b\\c&d\esm\in \SL_2(\RR)$ as in (2) of Definition \ref{dfnofvvm}, where $\tau = u+iv\in\HH$.
\end{enumerate}
We write $H_{k,\chi,\rho}(\Gamma)$ for the space of vector-valued harmonic weak Maass forms of weight $k$, character $\chi$ and type $\rho$ on $\Gamma$.
\end{dfn}

 Let $T = \sm 1&\lambda\\ 0&1\esm,\ \lambda>0$, be a generator of $\Gamma_{\infty}$, where $\Gamma_\infty$ is the stabilizer of $\infty$. Then we have
\begin{equation} \label{kappa}
\chi(T)\rho(T) = \sm e^{2\pi i\kappa_1}& & & & \\
						                 & \cdot &&&\\
						                 && \cdot &&\\
						                 &&& \cdot &\\
						                 &&&& e^{2\pi i\kappa_p}\esm,
\end{equation}
where $0\leq \kappa_j<1$ for $1\leq j\leq p$.
%The transformation property (1) implies that any $f(\tau)\in H_{k,\chi,\rho}(\Gamma)$ has a Fourier expansion at $i\infty$
%\[f(\tau) = \sum_{j=1}^p\sum_{n\gg-\infty} a(n,j,v)e^{2\pi i(n+\kappa_j)u/\lambda}\mbf{e}_j.\]
%Because of property (2), the coefficients $a(n,j,v)$ satisfy the second order differential equation
%\[\Delta_k a(n,j,v)e^{2\pi i(n+\kappa_j)u/\lambda}=0\]
%as functions in $v$.
Here we choose the function
\[H(w) = e^{-w}\int^\infty_{-2w}e^{-t}t^{-k}dt.\]
The integral converges for $k<1$ and can be holomorphically continued in $k$ (for $w\neq0$) in the same way as the Gamma function. If $w<0$, then $H(w) = e^{-w}\Gamma(1-k, -2w)$, where $\Gamma(a,x)$ denotes the incomplete Gamma function as in \cite{Abr}.
%We find that
%\begin{equation*}
%a(n,j,v) =
%\begin{cases}
%a^+(0,j) + a^-(0,j)v^{1-k} & \text{if $n+\kappa_j =0$},\\
%a^+(n,j)e^{-2\pi (n+\kappa_j)v/\lambda} + a^-(n,j)H(2\pi (n+\kappa_j)v/\lambda) & \text{if $n+\kappa_j\neq0$},
%\end{cases}
%\end{equation*}
%with complex coefficients $a^{\pm}(n,j)$.
Any harmonic weak Maass form $f(\tau)$ of weight $k$ has a unique decomposition $f(\tau) = f^+(\tau) + f^-(\tau)$, where
\begin{eqnarray} \label{decomposition}
f^+(\tau) &=& \sum_{j=1}^p\sum_{n\gg-\infty} a^+(n,j) e^{2\pi i(n+\kappa_j)\tau/\lambda}\mbf{e}_j,\\
\nonumber f^-(\tau) &=& \sum_{j=1}^p \biggl(\delta_{\kappa_j,0}a^-(0,j)v^{1-k} + \sum_{n\ll \infty\atop n+\kappa_j\neq 0} a^-(n,j) H(2\pi (n+\kappa_j)v/\lambda)e^{2\pi i(n+\kappa_j)u/\lambda}\biggr)\mbf{e}_j,
\end{eqnarray}
where $\delta_{\kappa_j,0} = 1$ if $\kappa_j=0$ or $0$ if $\kappa_j > 0$ (for the decomposition, see \cite[section 3]{BF}).
The first (resp. second) summand is called the $\it{holomorphic}$ (resp. {\it{non-holomorphic}}) part of $f(\tau)$.

Let us  recall the Maass raising and lowering operators on non-holomorphic modular forms of weight $k$. They are defined as the differential operators
\[R_k = 2i\frac{\partial}{\partial \tau} + kv^{-1}\ \text{and}\ L_k = -2iv^2\frac{\partial}{\partial \bar{\tau}}.\]

\begin{thm} \cite[Proposition 3.2]{BF} \label{operator}
 Let $f(\tau) \in H_{-k,\chi,\rho}(\Gamma)$.
  The assignment $f(\tau) \mapsto \xi_{-k}(f)(\tau) := v^{-k-2}\overline{L_{-k} f(z)} = R_{k}v^{-k}\overline{f(\tau)}$ defines an anti-linear mapping
\[\xi_{-k} : H_{-k,\chi,\rho}(\Gamma) \to M^!_{k+2,\bar{\chi},\bar{\rho}}(\Gamma).\]
Its kernel is $M^!_{-k,\chi,\rho}(\Gamma)$.
\end{thm}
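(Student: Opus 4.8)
This is the vector-valued analogue of \cite[Proposition 3.2]{BF}, and the plan is to reduce it to three routine verifications carried out componentwise --- equivariance of $\xi_{-k}$ for the slash action, holomorphy of $\xi_{-k}f$ on $\HH$, and meromorphy at the cusps --- after which the kernel statement drops out. Using the defining formula $\xi_{-k}(f)=2iv^{-k}\overline{\partial f/\partial\bar{\tau}}$ (equivalent, by a trivial algebraic identity, to $v^{-k-2}\overline{L_{-k}f}=R_k v^{-k}\overline{f}$), anti-linearity is immediate from the conjugation. For equivariance: since $\gamma\tau$ is holomorphic in $\tau$, the Wirtinger chain rule gives $\partial_{\bar\tau}[f(\gamma\tau)]=(\partial_{\bar\tau}f)(\gamma\tau)(c\bar\tau+d)^{-2}$ while $\partial_{\bar\tau}(c\tau+d)^{k}=0$; combining this with $v=\mathrm{Im}(\gamma\tau)|c\tau+d|^2$ and taking conjugates, a direct computation gives
\[
\xi_{-k}\bigl(f|_{-k,\chi,\rho}\gamma\bigr)=\bigl(\xi_{-k}f\bigr)\big|_{k+2,\bar\chi,\bar\rho}\,\gamma
\]
for every $\gamma\in\SL_2(\RR)$ (in particular for $\gamma\in\Gamma$, and for the scaling matrices $\gamma_q$ at the cusps). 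Hence $\Gamma$-invariance of $f$ under $|_{-k,\chi,\rho}$ forces $\Gamma$-invariance of $\xi_{-k}f$ under $|_{k+2,\bar\chi,\bar\rho}$.

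Next I would prove holomorphy on $\HH$. Writing the Laplacian in Wirtinger form, $\Delta_{-k}=-4v^2\partial_\tau\partial_{\bar\tau}-2ikv\,\partial_{\bar\tau}$, the harmonicity $\Delta_{-k}f=0$ is equivalent to $\partial_\tau(\partial_{\bar\tau}f)=-\tfrac{ik}{2v}\,\partial_{\bar\tau}f$. Differentiating $\xi_{-k}f=2iv^{-k}\overline{\partial_{\bar\tau}f}$ in $\bar\tau$ and using $\partial_{\bar\tau}v=\tfrac{i}{2}$ together with this identity, the two resulting terms cancel, so $\partial_{\bar\tau}(\xi_{-k}f)=0$ and $\xi_{-k}f$ is holomorphic on $\HH$ (equivalently, one may invoke $\Delta_{-k}=-\xi_{k+2}\circ\xi_{-k}$).

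To control the cusps it suffices, by the equivariance above, to treat $i\infty$. Apply $\xi_{-k}$ to the decomposition $f=f^++f^-$ of (\ref{decomposition}); since $\partial_{\bar\tau}f^+=0$, only $f^-$ contributes. Using the elementary identity $H'(w)+H(w)=2(-2w)^k e^{w}$ for $H(w)=e^{-w}\int_{-2w}^{\infty}e^{-t}t^k\,dt$, a term-by-term computation shows that $\xi_{-k}$ sends the summand $a^-(n,j)\,H(2\pi(n+\kappa_j)v/\lambda)\,e^{2\pi i(n+\kappa_j)u/\lambda}\mbf{e}_j$ to a nonzero constant times $\overline{a^-(n,j)}\,e^{-2\pi i(n+\kappa_j)\tau/\lambda}\mbf{e}_j$, and the $v^{1+k}$-term to a constant times $\overline{a^-(0,j)}\mbf{e}_j$. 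Because the Fourier support of $f^-$ is bounded above in $n$, the exponents $-(n+\kappa_j)/\lambda$ are bounded below, so $\xi_{-k}f$ has a Fourier expansion at $i\infty$ with finite principal part; moreover these exponents have the form $(\text{integer})+\kappa'_j$, matching the multiplier $\bar\chi$ and type $\bar\rho$ (here one uses $\kappa'_j=1-\kappa_j$ when $\kappa_j>0$ and $\kappa'_j=\kappa_j=0$ otherwise). Combined with the first two steps this gives $\xi_{-k}f\in M^!_{k+2,\bar\chi,\bar\rho}(\Gamma)$. For the kernel, $\xi_{-k}f=0$ iff $\partial_{\bar\tau}f\equiv 0$, i.e.\ iff $f$ is holomorphic on $\HH$; such an $f$ satisfying the transformation law and the moderate-growth bound of Definition \ref{dfnofhar} has, at each cusp, only the holomorphic part of (\ref{decomposition}) surviving, hence a Fourier expansion with finite principal part, so $f\in M^!_{-k,\chi,\rho}(\Gamma)$, and the reverse inclusion is clear.

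\textbf{Main obstacle.} The only non-formal ingredient is the cusp analysis: one must know the precise incomplete-Gamma shape of the Fourier expansion of a harmonic weak Maass form in order to be sure that $\xi_{-k}f$ is genuinely meromorphic (and not merely holomorphic with an essential singularity) at the cusps. This is exactly what the decomposition (\ref{decomposition}), taken from \cite[section 3]{BF}, supplies, so in the vector-valued setting the proof is obtained by applying the scalar computation to each component $f_j$ and invoking the equivariance of Step~1 to pass between cusps; nothing genuinely new is required.
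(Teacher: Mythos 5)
Your argument is correct. The paper gives no proof of this theorem at all --- it is quoted from Bruinier--Funke \cite[Proposition 3.2]{BF} and carried over to the vector-valued setting without comment --- and your componentwise reconstruction (equivariance of $\xi_{-k}$ under the slash action, holomorphy on $\HH$ via $\Delta_{-k}=-\xi_{k+2}\circ\xi_{-k}$, the termwise computation $H'(w)+H(w)=2(-2w)^k e^{w}$ on the incomplete-Gamma Fourier expansion to get a finite principal part at each cusp, and the resulting identification of the kernel) is precisely the standard argument that the citation points to.
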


We let $H^*_{-k,\chi,\rho}(\Gamma)$ denote the inverse image of the space of cusp forms $S_{k+2,\bar{\chi},\bar{\rho}}(\Gamma)$ under the mapping $\xi_{-k}$. Hence, if $f(\tau)\in H^*_{-k,\chi,\rho}(\Gamma)$, then the Fourier coefficients $a^-(n,j)$ as in (\ref{decomposition})  vanish if $n+\kappa_j$ is non-negative.
Now we introduce another differential operator $D^{k+1}$ on $H^*_{-k,\chi,\rho}(\Gamma)$ for $k>0$ as
\[D^{k+1} = \biggl( \frac{1}{2\pi i}\frac{d}{d\tau}\biggr)^{k+1}.\]
Let $f(\tau)\in H^*_{-k,\chi,\rho}(\Gamma)$ whose Fourier expansion is given as in (\ref{decomposition}). Then we have (see \cite{BOR})
\[(D^{k+1}f)(\tau) = \sum_{j=1}^p \sum_{n\gg-\infty} a^+(n,j)\biggl(\frac{n+\kappa_j}\lambda\biggr)^{k+1}e^{2\pi i(n+\kappa_j)\tau/\lambda}\mbf{e}_j.\]

%We have a bilinear pairing between the spaces $M_{k+2,\chi,\rho}(\Gamma)$ and $H^*_{-k,\bar{\chi},\bar{\rho}}(\Gamma)$ by putting
%\[\{f,g\} = (f,\xi_{-k}(g))\]
%for $f(\tau)\in M_{k+2,\chi,\rho}(\Gamma)$ and $g(\tau)\in H^*_{-k,\bar{\chi},\bar{\rho}}(\Gamma)$, where $(\ ,\ )$ is the Petersson inner product for $\Gamma$. The following results can be proved by almost the same argument as in the proof of Proposition 3.5 in \cite{BF}. 

%\begin{prop} \label{pairing}
%Let $f(\tau)\in M_{k+2,\chi,\rho}(\Gamma)$ and $g(\tau)\in H^*_{-k,\bar{\chi},\bar{\rho}}(\Gamma)$. Then 
%\[\{f,g\} = \cf(fg),\]
%where $\cf(fg)$ denotes the constant term of the Fourier expansion of $(fg)(\tau)$. 
%\end{prop}

%\begin{proof} [\bf Proof of Proposition \ref{pairing}]
%Note that we have 
%\[d(<f,\bar{g}>d\tau) = - <f,\overline{L_{-k}g}>d\mu,\]
%where $<\ ,\ >$ denotes the standard scalar product on $\CC^p$ given by
%\[\left< \sum_{j=1}^p a_j\mbf{e}_j, \sum_{j=1}^p b_j\mbf{e}_j\right> = \sum_{j=1}^p a_j\overline{b_j}\]
%and $d\mu = \frac{dudv}{v^2}$ is the usual invariant volume form on $\HH$.  

%Using this punctured fundamental domain, we can apply Stokes' theorem to get
%\[\int_{\mc{F}_\epsilon} <f,\overline{L_{-k}g}> d\mu = -\int_{\partial \mc{F}_\epsilon} <f,\bar{g}>d\tau = 
%\end{proof}

\subsection{Vector-valued Poincar\'e series} \label{section2.2}
In this subsection we begin to develop a theory of Poincar\'e series for vector-valued modular forms that runs parallel to the classical theory \cite{Leh, Ran}. Vector-valued Poincar\'e series was defined and its Fourier coefficients were given in the case of $\Gamma = \SL_2(\ZZ)$ and real $k>2$ in \cite{KM}. We review the result of Knopp and Mason in \cite{KM}  and extend their result to arbitrary $H$-group.

\begin{dfn} Fix integer $n$ and $\alpha$ with $1\leq \alpha\leq p$. The Poincar\'e series $P_{n,\alpha,\chi,\rho}(\tau)$ is defined as
\begin{equation} \label{poincare}
P_{n,\alpha,\chi,\rho}(\tau) := \frac12\sum_{\gamma=\sm a&b\\c&d\esm} \frac{e^{2\pi i(-n+\kappa_\alpha)\gamma\tau/\lambda}}{\chi(\gamma)(c\tau+d)^k}\rho(\gamma)^{-1}\mbf{e}_\alpha,
\end{equation}
where $\gamma = \sm a&b\\c&d\esm$ ranges over a set of coset representatives for $<T>\setminus \Gamma$ and $\kappa_\alpha$ is as in (\ref{kappa}).
\end{dfn}

The series (\ref{poincare}) is well-defined and     invariant with respect to the action $|_{k,\chi,\rho}$ of $\Gamma$ if we assume absolute convergence.

\begin{prop} \label{convergence}  If $k>2$, then the component function $(P_{n,\alpha,\chi,\rho})_j(\tau)$ of $P_{n,\alpha,\chi,\rho}(\tau),\ 1\leq j\leq p$, converges absolutely uniformly on compact subsets of $\HH$. In particular, each $(P_{n,\alpha,\chi,\rho})_j(\tau)$ is holomorphic in $\HH$.
\end{prop}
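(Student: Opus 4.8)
The plan is to reduce the convergence of the vector-valued Poincaré series to a classical scalar estimate. First I would observe that, since $\rho$ is unitary, each component of $\rho(\gamma)^{-1}\mbf{e}_\alpha$ has absolute value at most $1$, and since $\chi$ is a unitary character, $|\chi(\gamma)^{-1}| = 1$. Also, writing $\gamma\tau = u' + iv'$ with $v' = \mathrm{Im}(\gamma\tau) = v/|c\tau+d|^2$, and since $-n+\kappa_\alpha$ may be negative, the numerator $e^{2\pi i(-n+\kappa_\alpha)\gamma\tau/\lambda}$ has absolute value $e^{-2\pi(-n+\kappa_\alpha)v'/\lambda}$, which is bounded on compact subsets of $\HH$ (here one uses that $v'$ is bounded above on such sets as $\gamma$ ranges over the coset representatives, except for possibly finitely many $\gamma$ — or more simply, that the supremum of $v'$ over all $\gamma$ and $\tau$ in a compact set is finite). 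Hence there is a constant $M$, depending only on the compact set, such that each component of the $\gamma$-term is bounded in absolute value by $M |c\tau+d|^{-k}$.

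Next I would bound the tail sum $\sum_{\gamma} |c\tau+d|^{-k}$, where $\gamma$ runs over coset representatives for $\langle T \rangle \backslash \Gamma$ with distinct lower rows $(c,d)$. Since $\Gamma$ is an $H$-group (a Fuchsian group of the first kind), the lattice of lower rows $(c,d)$ arising from $\Gamma$ is discrete in $\RR^2$, and the standard argument — comparing the sum to an integral $\int\int (x^2 + y^2)^{-k/2}\,dx\,dy$ over a region bounded away from the origin, in coordinates adapted to $\tau$ — shows that $\sum_{(c,d)} |c\tau + d|^{-k}$ converges for $k > 2$ and is bounded uniformly on compact subsets of $\HH$. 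This is exactly the classical Poincaré series convergence estimate (cf. \cite{Leh, Ran}); the only new wrinkle is that we work with the discrete set of lower rows of a general $H$-group rather than of $\SL_2(\ZZ)$, but discreteness plus the lattice-point-counting argument is all that is needed. Combining with the previous paragraph gives a uniform bound $\sum_{\gamma} M|c\tau+d|^{-k} < \infty$ on each compact subset, establishing absolute and uniform convergence of each component function $(P_{n,\alpha,\chi,\rho})_j(\tau)$.

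Finally, from locally uniform absolute convergence of a series of holomorphic functions one concludes by the Weierstrass theorem that each $(P_{n,\alpha,\chi,\rho})_j(\tau)$ is itself holomorphic in $\HH$, which gives the last assertion. The main obstacle is the second step: making precise that the lower rows of a general $H$-group form a discrete subset of $\RR^2$ to which the classical lattice-point counting argument applies, and confirming the uniform-on-compacta bound on $\mathrm{Im}(\gamma\tau)$; once that geometric input is in hand, the rest is a routine majorization. One should also take a moment to check that the grouping of terms by $\langle T\rangle$-cosets is compatible with absolute convergence — i.e. that passing from a sum over $\Gamma$ to a sum over distinct lower rows does not affect the estimate — which it does not, since $T$ fixes the lower row $(c,d)$ and merely translates $\gamma\tau$ by $\lambda$, leaving $|c\tau+d|$ unchanged.
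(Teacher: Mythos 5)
Your proposal is correct and follows essentially the same route as the paper's (sketched) proof: bound the exponential factor using $\Im(\gamma\tau)=v/|c\tau+d|^2\le 1/(c_0^2v)$ for $c\neq 0$ together with unitarity of $\chi$ and $\rho$, then invoke the classical estimate that $\sum_{(c,d)}|c\tau+d|^{-k}$ converges locally uniformly for $k>2$ over the distinct lower rows of an $H$-group, and conclude holomorphy by Weierstrass. You actually supply slightly more detail than the paper (which simply cites "a standard argument" and the references \cite{Leh,Ran}), including the harmless verification that grouping by $\langle T\rangle$-cosets is compatible with the estimate.
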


\begin{proof} [\bf Proof of Proposition \ref{convergence}]
We can  see that
\[(P_{n,\alpha,\chi,\rho})_j(\tau) = \frac12\sum_{\gamma=\sm a&b\\c&d\esm}e^{2\pi i(-n+\kappa_\alpha)\gamma\tau/\lambda}\chi(\gamma)^{-1}(c\tau+d)^{-k}\rho(\gamma^{-1})_{j,\alpha},\]
and we have to estimate the exponential term. Setting $\tau = u+iv$, we see that
\[\biggl|e^{2\pi i(-n+\kappa_\alpha)\gamma\tau/\lambda}\biggr| \leq e^{2\pi(|n|+1)/(c_0v)},\]
for $c\neq0$, where $c_0 = \min \{c>0|\ \exists \sm a&b\\c&d\esm\in\Gamma\}$. Note that $c=0$ corresponds to the term $\gamma = \pm I_2$. Now a standard argument shows that if $k>2$ then the function $(P_{n,\alpha,\chi,\rho})_j(\tau)$ converges absolutely uniformly on compact subsets of $\HH$.
\end{proof}

It remains for us to consider the Fourier expansions of the component functions $(P_{n,\alpha,\chi,\rho})_j(\tau)$. The explicit expressions that arise are familiar to those who have studied the Fourier coefficients of (scalar-valued) modular forms.   We begin by introducing some of the notation:
\begin{eqnarray*}
C &:=& \{\sm a&b\\c&d\esm\in\Gamma|\ 0\leq -d,\ a < |c|\lambda\},\\
C^+ &:=& \{\sm a&b\\c&d\esm\in C|\ c>0\}.
\end{eqnarray*}
Then  $C$ is the set of double coset representatives for $<T>\setminus\Gamma/<T>$.
We also use the Bessel functions of the first kind \cite{Wat}:
\begin{eqnarray*}
J_n(z) &=& \sum_{t=0}^\infty \frac{(-1)^t(z/2)^{n+2t}}{t!\Gamma(n+t+1)},\\
I_n(z) &=& \sum_{t=0}^\infty \frac{(z/2)^{n+2t}}{t!\Gamma(n+t+1)}.
\end{eqnarray*}

\begin{thm} \label{fourierpoincare} Let $P_{n,\alpha,\chi,\rho}(\tau)$ be the Poincar\'e series in (\ref{poincare}). As long as $k>2$, $P_{n,\alpha,\chi,\rho}(\tau)\in M^!_{k,\chi,\rho}(\Gamma)$ and has a Fourier expansion of the form at $i\infty$
\[(P_{n,\alpha,\chi,\rho})_j(\tau) = \delta_{j,\alpha}e^{2\pi i(-n+\kappa_\alpha)\tau/\lambda} + \sum_{l+\kappa_j>0} a_{n,\alpha,\chi,\rho}(l,j)e^{2\pi i(l+\kappa_j)\tau/\lambda},\]
where the Fourier coefficients $a_{n,\alpha,\chi,\rho}(l,j)$ are as follows and exactly one of the following holds:
\begin{itemize}
\item[(1)] If $-n+\kappa_\alpha>0$, then
\begin{eqnarray*}
a_{n,\alpha,\chi,\rho}(l,j) &=& \frac{2\pi i^{-k}}{\lambda}\sum_{\gamma=\sm a&b\\c&d\esm\in C^+}c^{-1}\biggl(\frac{l+\kappa_j}{-n+\kappa_\alpha}\biggr)^{\frac{k-1}2}\chi(\gamma)^{-1}\rho(\gamma^{-1})_{j,\alpha}e^{\frac{2\pi i}{c\lambda}((-n+\kappa_\alpha)a+(l+\kappa_j)d)}\\
&&\times J_{k-1}\biggl(\frac{4\pi}{c\lambda}\sqrt{(-n+\kappa_\alpha)(l+\kappa_j)}\biggr)
\end{eqnarray*}
%In this case, $f_{n,\alpha,\chi,\rho}(\tau)$ is a cusp form.

\item[(2)] If $-n+\kappa_\alpha = 0$, then
\[a_{n,\alpha,\chi,\rho}(l,j) = \frac{(-2\pi i)^{k}}{\Gamma(k)\lambda^k}\sum_{\gamma=\sm a&b\\c&d\esm\in C^+}c^{-k}(l+\kappa_j)^{k-1}\chi(\gamma)^{-1}\rho(\gamma^{-1})_{j,\alpha}e^{\frac{2\pi i}{c\lambda}(l+\kappa_j)d}.\]
%In this case, $f_{n,\alpha,\chi,\rho}(\tau)$ is vector-valued holomorphic modular form and $(f_{n,\alpha,\chi,\rho})_\alpha(\tau)$ is the unique component of $f_{n,\alpha,\chi,\rho}(\tau)$ which does not vanish at $i\infty$.

\item[(3)] If $-n+\kappa_\alpha<0$, then
\begin{eqnarray*}
a_{n,\alpha,\chi,\rho}(l,j) &=& \frac{2\pi i^{-k}}{\lambda}\sum_{\gamma=\sm a&b\\c&d\esm\in C^+}c^{-1}\biggl(\frac{l+\kappa_j}{n-\kappa_\alpha}\biggr)^{\frac{k-1}2}\chi(\gamma)^{-1}\rho(\gamma^{-1})_{j,\alpha}e^{\frac{2\pi i}{c\lambda}((-n+\kappa_\alpha)a+(l+\kappa_j)d)}\\
&&\times I_{k-1}\biggl(\frac{4\pi}{c\lambda}\sqrt{(n-\kappa_\alpha)(l+\kappa_j)}\biggr).
\end{eqnarray*}
%In this case, $(f_{n,\alpha,\chi,\rho})_\alpha(\tau)$ has a pole at $i\infty$ and all other components of $f_{n,\alpha,\chi,\rho}(\tau)$ vanishes at $i\infty$.
\end{itemize}
\end{thm}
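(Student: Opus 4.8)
The plan is to unfold the Poincaré series at the cusp $i\infty$ and compute the resulting Kloosterman-type sum and Bessel integrals component by component, following the classical template of Lehner and Rankin but keeping track of the matrix entries $\rho(\gamma^{-1})_{j,\alpha}$. First I would split the sum defining $(P_{n,\alpha,\chi,\rho})_j(\tau)$ according to the lower-left entry $c$ of $\gamma$. The terms with $c=0$ correspond to $\gamma=\pm I_2$ (using $-I_2\in\Gamma$ and $\rho(-I_2)=I_p$, together with the factor $\frac12$), and they contribute precisely the leading term $\delta_{j,\alpha}e^{2\pi i(-n+\kappa_\alpha)\tau/\lambda}$. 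For the terms with $c\neq 0$, I would organize the coset representatives for $\langle T\rangle\backslash\Gamma$ into double cosets in $\langle T\rangle\backslash\Gamma/\langle T\rangle$: writing a general $\gamma$ with $c\neq 0$ as $T^m\gamma_0 T^l$ with $\gamma_0\in C^+$ (recall $C^+$ was introduced as a set of double-coset representatives with $c>0$, and $-I_2$ accounts for the sign), the left translation $T^m$ is absorbed by the periodicity of the exponential together with $\chi(T)\rho(T)$ producing the phase $e^{2\pi i\kappa_j m}$, while the right translations $T^l$, $l\in\ZZ$, assemble into a sum over a full period.

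The next step is to carry out the standard Fourier-coefficient extraction. For fixed $\gamma_0=\sm a&b\\c&d\esm\in C^+$, summing over the right translates $T^l$ and integrating $\int_0^\lambda (P_{n,\alpha,\chi,\rho})_j(u+iv)e^{-2\pi i(l+\kappa_j)(u+iv)/\lambda}\,du$ unfolds the local sum to an integral over a horizontal line in $\HH$; the change of variables $\tau\mapsto\gamma_0\tau = \frac{a}{c}-\frac{1}{c^2(\tau+d/c)}$ turns it into the classical integral
\[
\int_{-\infty}^{\infty} e^{2\pi i(-n+\kappa_\alpha)(a/c)/\lambda}\,e^{-2\pi i(-n+\kappa_\alpha)/(c^2\lambda(\tau+d/c))}\,(c\tau+d)^{-k}\,e^{-2\pi i(l+\kappa_j)\tau/\lambda}\,d\tau,
\]
up to the prefactor $\chi(\gamma_0)^{-1}\rho(\gamma_0^{-1})_{j,\alpha}$. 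This integral is a Hankel-type contour integral evaluated in the classical references: for $-n+\kappa_\alpha>0$ it produces the $J$-Bessel function $J_{k-1}$ with the argument $\frac{4\pi}{c\lambda}\sqrt{(-n+\kappa_\alpha)(l+\kappa_j)}$ and the power $\bigl(\frac{l+\kappa_j}{-n+\kappa_\alpha}\bigr)^{(k-1)/2}$; for $-n+\kappa_\alpha=0$ the oscillatory factor drops out and the integral collapses to the Gamma-function value giving $(l+\kappa_j)^{k-1}/\Gamma(k)$; and for $-n+\kappa_\alpha<0$ the sign reversal in the exponent turns $J_{k-1}$ into $I_{k-1}$ with argument $\frac{4\pi}{c\lambda}\sqrt{(n-\kappa_\alpha)(l+\kappa_j)}$. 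Assembling the three cases, together with the constants $\frac{2\pi i^{-k}}{\lambda}$ resp. $\frac{(-2\pi i)^k}{\Gamma(k)\lambda^k}$, yields the stated formulas. The condition $k>2$ is exactly what Proposition \ref{convergence} guarantees, so interchanging summation and integration is legitimate, and the absolute convergence also shows $P_{n,\alpha,\chi,\rho}(\tau)\in M^!_{k,\chi,\rho}(\Gamma)$ — that it is weakly holomorphic rather than merely holomorphic is visible from the principal term $\delta_{j,\alpha}e^{2\pi i(-n+\kappa_\alpha)\tau/\lambda}$ when $-n+\kappa_\alpha<0$, and the behavior at the other cusps is handled the same way (the unfolding there produces no principal part, since the representative $\gamma_q$ does not lie in $\Gamma$).

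The main obstacle I anticipate is purely bookkeeping rather than conceptual: one must verify that the double-coset decomposition interacts correctly with the twist $\chi(\gamma)^{-1}\rho(\gamma)^{-1}$, i.e. that $\chi(T^m\gamma_0 T^l)^{-1}\rho(T^m\gamma_0 T^l)^{-1}\mbf{e}_\alpha$ factors as the phase from $\kappa_j$ (on the left) times $\chi(\gamma_0)^{-1}\rho(\gamma_0)^{-1}$ times the phase from $\kappa_\alpha$ (on the right), using that $\rho(T)$ and $\chi(T)$ are diagonal with the prescribed entries $e^{2\pi i\kappa_j}$; this is where the diagonality hypothesis on $\rho$ at parabolic elements is essential. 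The Bessel integral evaluations themselves are classical and can simply be cited from \cite{Wat, Leh, Ran}, so I would not reproduce them; the value for the $-n+\kappa_\alpha<0$ case follows from the $J$-Bessel formula by analytic continuation in the spectral parameter (replacing $\sqrt{-n+\kappa_\alpha}$ by $i\sqrt{n-\kappa_\alpha}$ and using $J_{k-1}(iz)=i^{k-1}I_{k-1}(z)$), which I would note explicitly rather than redo from scratch.
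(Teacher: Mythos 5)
Your proposal is correct and arrives at the stated formulas, but it takes a genuinely different computational route from the paper. The paper deliberately keeps everything at the level of absolutely convergent series: after isolating the $\gamma=\pm I_2$ terms it expands $e^{2\pi i(-n+\kappa_\alpha)\gamma\tau/\lambda}=e^{2\pi i(-n+\kappa_\alpha)a/(c\lambda)}\,e^{-2\pi i(-n+\kappa_\alpha)/(c^2\lambda(\tau+d/c))}$ as a power series in $(\tau+d/c)^{-1}$, applies the Lipschitz summation formula to the sum over the right $T$-translates within each double coset, and then recognizes the resulting power series in $(-n+\kappa_\alpha)(l+\kappa_j)$ term by term as the Taylor series of $J_{k-1}$ (or $I_{k-1}$, or the single $\Gamma(k)$ term when $-n+\kappa_\alpha=0$). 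You instead extract the $(l,j)$-th coefficient by integrating against $e^{-2\pi i(l+\kappa_j)u/\lambda}$ over a period and evaluating the resulting Hankel-type contour integral from the classical tables. Both are standard and both require exactly the same double-coset bookkeeping and the same use of $k>2$ to justify interchanging sum and integral (resp.\ the two summations); your route is arguably shorter if one is willing to cite the Bessel integral, while the paper's Lipschitz-summation computation is reused almost verbatim in the proof of Theorem \ref{suppleperiod} to expand the Eichler integrals $\mc{E}^H_{P_{n,\alpha,\chi,\rho}}$, so the authors get extra mileage from setting it up here. One small correction to your bookkeeping paragraph: the Poincar\'e series sums over representatives of $\langle T\rangle\backslash\Gamma$, so the \emph{left} translates $T^m\gamma_0$ are the ones already quotiented out (the summand is left-$T$-invariant because the phase $e^{2\pi i\kappa_\alpha m}$ from the exponential cancels against $\chi(T^m)^{-1}\rho(T^m)^{-1}$ acting on $\mbf{e}_\alpha$), while it is the \emph{right} translates $T^l$ that produce the phases $e^{2\pi iq\kappa_j}$ appearing in the Lipschitz formula and hence the $\kappa_j$-shifted frequencies in the $j$-th component; you have attributed the $e^{2\pi i\kappa_j m}$ phase to the left side. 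This is exactly the verification you flagged as the main obstacle, and once the sides are straightened out the argument goes through.
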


\begin{proof} [\bf Proof of Theorem \ref{fourierpoincare}]
We only sketch the proof since the following results are all either well known or fairly straightforward generalizations of results given in \cite{KM, Leh, Ran}.
By assumption that $\rho(-I_2) = I_p$, we see that $\rho(\pm \gamma) = \rho(\gamma)$. It follows from this that $\gamma$ and $-\gamma$ make equal contributions to the right hand side of (\ref{poincare}). Noting that the terms with $c=0$ correspond to $\gamma = \pm I_2$ we obtain
\begin{eqnarray*}
(P_{n,\alpha,\chi,\rho})_j(\tau) &=& \frac12 \sum_{\gamma} e^{2\pi i(n+\kappa_\alpha)\gamma\tau/\lambda}\chi(\gamma)^{-1}(c\tau+d)^{-k}\rho(\gamma^{-1})_{j,\alpha}\\
&=& \delta_{j,\alpha}e^{2\pi i(n+\kappa_\alpha)\tau/\lambda} + \sum_{\gamma\neq \pm I}e^{2\pi i(n+\kappa_\alpha)\gamma\tau/\lambda}\chi(\gamma)^{-1}(c\tau+d)^{-k}\rho(\gamma^{-1})_{j,\alpha}.
\end{eqnarray*}
From this expression  we use the standard method of invoking the Lipschitz summation formula \cite{KR} (see, e.g., pp. 295-299 of \cite{Leh} or pp. 155-164 of \cite{Ran}):
If $0<\kappa_j<1,\ n>-1$ and $\Im(\tau)>0$, then
\[\sum_{m=0}^\infty (m+\kappa_j)^ne^{2\pi i\tau(m+\kappa_j)} = \frac{\Gamma(n+1)}{(2\pi)^{n+1}}\sum_{q=-\infty}^{\infty}e^{2\pi iq\kappa_j}(-i(\tau-q))^{-n-1}.\]
If $\kappa_j=n=0$ and $\Im(\tau)>0$, then
\[\sum_{m=0}^\infty (m+\kappa_j)^ne^{2\pi i\tau(m+\kappa_j)} = -\frac12 + \frac{1}{2\pi}\sum_{q=-\infty}^{\infty}e^{2\pi iq\kappa_j}(-i(\tau-q))^{-1},\]
where $\sum_{q=-\infty}^{\infty} = \lim_{N\to\infty} \sum_{q=-N}^N$ in the case of $\kappa_j = n =0$.
      Then
we can derive formulas for the Fourier coefficients of $(P_{n,\alpha,\chi,\rho})_j(\tau)$ at $i\infty$  by
using the Bessel functions of the first kind : $I_n(z)$ and $J_n(z)$.
\end{proof}

Now we look at the properties of the Poincar\'e series $P_{n,\alpha,\chi,\rho}(\tau)$.
\begin{thm} \label{othercusp} The Poincar\'e series $P_{n,\alpha,\chi,\rho}(\tau)$ satisfies the following properties:
\begin{enumerate}
\item[(1)] The function $P_{n,\alpha,\chi,\rho}(\tau)$ vanishes at all cusps of $\Gamma$ which are not equivalent to $i\infty$.
\item[(2)] $S_{k,\chi,\rho}(\Gamma)$ is spanned by Poincar\'e series $P_{n,\alpha,\chi,\rho}(\tau)$ with $n+\kappa_\alpha>0$.
\end{enumerate}
\end{thm}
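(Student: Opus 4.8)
The plan is to prove the two statements by exploiting the definition of $P_{n,\alpha,\chi,\rho}(\tau)$ as an average over $\langle T\rangle\backslash\Gamma$. For part (1), let $q$ be a cusp of $\Gamma$ not equivalent to $i\infty$, and pick $\gamma_q=\sm a&b\\c&d\esm\in\SL_2(\RR)$ with $\gamma_q\infty=q$. I would substitute $\tau\mapsto\gamma_q\tau$ into the series and study the behaviour of $(c\tau+d)^{-k}(P_{n,\alpha,\chi,\rho})_j(\gamma_q\tau)$ as $v\to\infty$. Because the double coset structure $\langle T\rangle\backslash\Gamma/\langle T\rangle$ (which is controlled by the set $C$ from the previous subsection) separates the contribution of $\Gamma_q$ from the rest, every term in the reorganized sum carries a factor $e^{2\pi i(-n+\kappa_\alpha)\gamma\gamma_q\tau/\lambda}$ with $\gamma\gamma_q$ having nonzero lower-left entry (since $q\not\sim i\infty$ means no coset representative fixes $q$), so $\Im(\gamma\gamma_q\tau)\to 0$ and each exponential decays; combined with the absolute, locally uniform convergence from Proposition \ref{convergence} this forces the limit to be $0$, i.e.\ the constant term in the expansion of $P_{n,\alpha,\chi,\rho}$ at $q$ vanishes. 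This is the classical computation for Poincar\'e series at inequivalent cusps, done component-by-component; the vector-valued decoration only rides along through the bounded factors $\chi(\gamma)^{-1}\rho(\gamma^{-1})_{j,\alpha}$, each of modulus at most the operator norm of $\rho$, so it does not affect the estimate.

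For part (2), I would argue by the standard Petersson-inner-product duality. First, using Theorem \ref{fourierpoincare}, observe that when $-n+\kappa_\alpha>0$ the series $P_{n,\alpha,\chi,\rho}(\tau)$ has nonpositive-exponent terms (it is a genuine weakly holomorphic object, not a cusp form), whereas for $n+\kappa_\alpha>0$ the ``$\delta_{j,\alpha}e^{2\pi i(-n+\kappa_\alpha)\tau/\lambda}$'' term has a positive exponent and all remaining exponents are positive as well, so $P_{n,\alpha,\chi,\rho}(\tau)\in S_{k,\chi,\rho}(\Gamma)$ — here one also invokes part (1) to get vanishing at the other cusps. Then I would compute the Petersson inner product $\langle g, P_{n,\alpha,\chi,\rho}\rangle$ for an arbitrary $g=\sum_j g_j\mbf e_j\in S_{k,\chi,\rho}(\Gamma)$ by the usual unfolding: write the inner product as an integral over a fundamental domain, unfold against $\langle T\rangle\backslash\Gamma$ to an integral over the strip $\{0\le u<\lambda,\ v>0\}$, and evaluate using the Fourier expansion of $g_\alpha$. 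This yields $\langle g, P_{n,\alpha,\chi,\rho}\rangle = c_{k,n,\alpha}\,\widehat{g_\alpha}(n)$ for an explicit nonzero constant $c_{k,n,\alpha}$ (a Gamma factor times a power of $n+\kappa_\alpha$), where $\widehat{g_\alpha}(n)$ is the $(n,\alpha)$-Fourier coefficient of $g$. Consequently, if $g$ is orthogonal to every $P_{n,\alpha,\chi,\rho}$ with $n+\kappa_\alpha>0$, then all Fourier coefficients of $g$ vanish, so $g=0$; hence those Poincar\'e series span $S_{k,\chi,\rho}(\Gamma)$.

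The main obstacle I anticipate is bookkeeping rather than any deep difficulty: one must verify that the unfolding is legitimate (absolute convergence of the double integral, justified by $k>2$ and the decay of cusp forms near the cusps, via Proposition \ref{convergence}), and one must track the multiplier $\chi$, the representation $\rho$, and the shift $\kappa_\alpha$ carefully through the coset manipulation so that the unitarity of $\chi$ and $\rho$ makes the cross terms in $\langle g, P\rangle$ collapse to a single Fourier coefficient of the $\alpha$-th component of $g$. A secondary point requiring a little care in part (1) is that $P_{n,\alpha,\chi,\rho}$ may have a nontrivial principal part at $i\infty$ (when $-n+\kappa_\alpha<0$), so ``vanishes at a cusp'' must be interpreted as vanishing of the constant term there — which is automatic at cusps inequivalent to $i\infty$ since no representative in the sum contributes a nonnegative power of $q$ in the local parameter. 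Everything else is a routine transcription of the scalar-valued arguments in \cite{Leh, Ran} to the vector-valued, $H$-group setting.
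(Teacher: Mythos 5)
Your overall strategy coincides with the paper's for both parts: in (1) a term-by-term estimate of $(C\tau+D)^{-k}P_{n,\alpha,\chi,\rho}(\gamma_q\tau)$ using that every $\gamma\gamma_q$ has nonzero lower-left entry when $q$ is not equivalent to $i\infty$, and in (2) the unfolding of the Petersson inner product (the paper's Lemma \ref{petersson}) followed by the observation that the orthogonal complement $\mc{S}^\perp$ of the span of the cuspidal Poincar\'e series is zero. Part (2) as you describe it is exactly the paper's argument.

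In part (1), however, the reason you give for each term tending to zero is wrong. If $\Im(\gamma\gamma_q\tau)\to 0$, then $\bigl|e^{2\pi i(-n+\kappa_\alpha)\gamma\gamma_q\tau/\lambda}\bigr|=e^{-2\pi(-n+\kappa_\alpha)\Im(\gamma\gamma_q\tau)/\lambda}\to 1$: the exponential does \emph{not} decay, it is merely bounded (this is precisely the estimate $|e^{2\pi i(\cdot)\gamma\gamma_q\tau}|\le e^{2\pi(|n|+1)/(C_0v)}$ already used in Proposition \ref{convergence}). The decay of each term comes instead from the automorphy factor: writing $\gamma\gamma_q=\sm *&*\\c'&d'\esm$ with $c'\neq0$, one has $|c'\tau+d'|\ge|c'|v\to\infty$, so $(c'\tau+d')^{-k}\to0$. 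You have both ingredients in your setup (you carry the factor $(c\tau+d)^{-k}$ and you note $c'\neq0$), so the repair is one line, but the step ``each exponential decays'' would fail as stated. A secondary bookkeeping point: with the paper's convention that the leading exponent of $P_{n,\alpha,\chi,\rho}$ is $-n+\kappa_\alpha$, the cuspidal Poincar\'e series are those with $-n+\kappa_\alpha>0$ (this is what the paper's own proof of (2) actually uses, the sign in the theorem statement notwithstanding); your assertion that $n+\kappa_\alpha>0$ forces the exponent $-n+\kappa_\alpha$ to be positive is false, so take care that the spanning set in (2) consists of the Poincar\'e series that genuinely decay at $i\infty$.
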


\begin{proof} [\bf Proof of Theorem \ref{othercusp}]
Let $q$ be a parabolic cusp of $\Gamma$ which is not equivalent to $i\infty$ and $\gamma_q = \sm A&B\\C&D\esm\in\SL_2(\RR)$ such that $\gamma_q \infty = q$ as in (2) of Definition \ref{dfnofvvm} . Then we need to check the behavior of the following function at $i\infty$
\[\biggl|(C\tau+D)^{-k}P_{n,\alpha,\chi,\rho}(\gamma_q\tau)\biggr| =   \frac12 \sum_{\gamma=\sm a&b\\c&d\esm} \biggl|\frac{e^{2\pi i(n+\kappa_\alpha)\gamma \gamma_q\tau/\lambda}}{\chi(\gamma)(c'\tau+d')^k}\rho(\gamma^{-1})_{j,\alpha}\biggr| \leq \frac12 \sum_{\gamma=\sm a&b\\c&d\esm} \biggl|\frac{e^{2\pi i(n+\kappa_\alpha)\gamma \gamma_q\tau/\lambda}}{(c'\tau+d')^k}\biggr| \]
for each $1\leq j\leq p$, where $\sm a&b\\c&d\esm \gamma_q = \sm *&*\\ c'&d'\esm$ and $\gamma$ ranges over a set of coset representatives for $<T>\setminus\Gamma$.
Now we estimate the exponential term. This estimate is
a standard one, which was also used in the proof of Proposition \ref{convergence}. Setting $\tau = u+iv$, we see that there is a constant $C_0$ such that
$|e^{2\pi i(n+\kappa_\alpha)\gamma \gamma_q\tau}| \leq e^{2\pi(|n|+1)/(C_0v)}$
for $c'\neq0$.  Since $q$ is not equivalent to $i\infty$, $c'\neq0$ for every $\gamma=\sm a&b\\c&d\esm\in\Gamma$. Therefore, each term $\frac{e^{2\pi i(n+\kappa_\alpha)\gamma \gamma_q\tau/\lambda}}{(c'\tau+d')^k}$ vanishes as $\tau\to i\infty$. This completes the proof of (1).

For the second assertion, we give the definition of the Petersson inner product for vector-valued modular forms: For $f(\tau) = \sum_{j=1}^p f_j(\tau)$ and $g(\tau)= \sum_{j=1}^p g_j(\tau)$ in $M_{k,\chi,\rho}(\Gamma)$, we let
\[(f,g) := \int_{\mc{F}} \sum_{j=1}^p f_j(\tau)\overline{g_j(\tau)}v^k\frac{dudv}{v^2},\]
where $\mc{F}$ is the fundamental domain for $\Gamma$.
If one of $f(\tau)$ and $g(\tau)$ is a cusp form and $\rho$ is unitary, then this integral is well-defined.
Let $\mc{S}\subseteq S_{k,\chi,\rho}(\Gamma)$ be the subspace spanned by the Poincar\'e series $P_{n,\alpha,\chi,\rho}(\tau)$ for which $-n+\kappa_\alpha>0$ and $g(\tau)\in \mc{S}^\perp$. Then $<P_{n,\alpha,\chi,\rho},g> = 0$ for every $n$ and $\alpha$. This implies that the $(-n)$th Fourier coefficients of the $\alpha$th component function $g_\alpha(\tau)$ vanishes for every $n$ and $\alpha$ (see Lemma \ref{petersson}) . So we must have $g(\tau) =0$, and this shows that $\mc{S}^\perp = 0$. Then
 we have $S_{k,\chi,\rho}(\Gamma) = \mc{S}$, which shows that $(2)$ holds.
\end{proof}

\subsection{Supplementary functions} \label{section2.3}
Suppose $f(\tau)\in S_{k+2,\chi,\rho}(\Gamma)$ with $k>0$. By (2) of Theorem \ref{othercusp}, there exist complex numbers $b_1,\cdots, b_s$ such that $f(\tau) = \sum_{i=1}^s b_i P_{n_i,\alpha_i,\chi,\rho}(\tau)$. Put $f^*(\tau) = \sum_{i=1}^s\overline{b_i}P_{n'_i,\alpha_i,\bar{\chi},\bar{\rho}}(\tau)$, where
\begin{equation*}
n'_i =
\begin{cases}
-n_i & \text{if $\kappa_\alpha =0$},\\
1 - n_i & \text{if $\kappa_\alpha>0$}.
\end{cases}
\end{equation*}
Note that
\[\chi(T)\rho(T) = \sm e^{2\pi i\kappa_1}& & & & \\
						                 & \cdot &&&\\
						                 && \cdot &&\\
						                 &&& \cdot &\\
						                 &&&& e^{2\pi i\kappa_p}\esm,\]
where $0\leq \kappa_j<1$ for $1\leq j\leq p$.  Let
\[\bar{\chi}(T)\bar{\rho}(T) = \sm e^{2\pi i\kappa'_1}& & & & \\
						                 & \cdot &&&\\
						                 && \cdot &&\\
						                 &&& \cdot &\\
						                 &&&& e^{2\pi i\kappa'_p}\esm,\]
with $0\leq\kappa_j'<1$ for $1\leq j\leq p$. Then
\begin{equation*}
\kappa_j' =
\begin{cases}
0 & \text{if $\kappa_j=0$},\\
1-\kappa & \text{if $\kappa_j>0$},
\end{cases}
\end{equation*}
for $1\leq j\leq p$.
Thus we have the expansion at $i\infty$
\begin{eqnarray*}
P_{n'_i,\alpha_i,\bar{\chi},\bar{\rho}}(\tau) &=& e^{2\pi i(-n'_i+\kappa'_{\alpha_i})\tau/\lambda}\mbf{e}_{\alpha_i}+\sum_{j=1}^p \sum_{l+\kappa_j>0} a_{n'_i,\alpha_i}(l,j)e^{2\pi i(l+\kappa_j)\tau/\lambda}\mbf{e}_j\\
&=& e^{2\pi i(n_i-\kappa_{\alpha_i})\tau/\lambda}\mbf{e}_{\alpha_i} + \sum_{j=1}^p \sum_{l+\kappa_j>0} a_{n'_i,\alpha_i}(l,j)e^{2\pi i(l+\kappa_j)\tau/\lambda}\mbf{e}_j.
\end{eqnarray*}
It follows that $f^*(\tau)\in M^!_{k+2,\bar{\chi},\bar{\rho}}(\Gamma)$ and $f^*(\tau)$ vanishes at all of the other cusps of $\Gamma$.
Furthermore,  $f^*(\tau)$ has a pole at $i\infty$ with principal part
\[\sum_{i=1}^s \overline{b_i}e^{2\pi i(n_i-\kappa_{\alpha_i})\tau/\lambda}\mbf{e}_{\alpha_i}.\]
We call $f^*(\tau)$ the {\it{function supplementary to}} $f(\tau)$.

Functions $f(\tau)$ and $f^*(\tau)$ have important relations which can be expressed in terms of period functions.  A form $f(\tau)\in M^!_{k+2,\chi,\rho}(\Gamma)$ is a {\it{vector-valued weakly holomorphic cusp form}} if its constant term vanishes. Let $S_{k+2,\chi,\rho}^!(\Gamma)$ denote the space of vector-valued weakly holomorphic cusp forms. Suppose that $f(\tau) = \sum_{j=1}^p\sum_{n\gg-\infty}a(n,j)e^{2\pi i(n+\kappa_j)\tau/\lambda}\mbf{e}_j \in S_{k+2,\chi,\rho}^!(\Gamma)$.
Now we recall two kinds of {\it{Eichler integrals}} of $f(\tau)$:
\begin{eqnarray*}
\mc{E}_f(\tau) &:=& \sum_{j=1}^p\sum_{n\gg-\infty\atop n+\kappa_j\neq0} a(n,j)\biggr(\frac{n+\kappa_j}{\lambda}\biggl)^{-(k+1)}e^{2\pi i(n+\kappa_j)\tau/\lambda}\mbf{e}_j,\\
%\mc{E}^H_f(\tau) &:=& \mc{E}_f(\tau) + \frac{2\pi i^k}{\lambda(k+1)!}\sum_{j=1}^p \delta_{\kappa_j,0}\biggl(\sum_{t=1}^p\sum_{l<0} a(l,t)\\
%&&\times \sum_{\gamma=\sm a&b\\c&d\esm\in C^+} c^{-1}\chi^{-1}(\gamma)\rho(\gamma^{-1})_{j,t}e^{\frac{2\pi i}{c\lambda}(l+\kappa_t)a}\biggl(\frac{-2\pi }{c}\biggr)^{k+1}\biggr)\mbf{e}_j,\\
\mc{E}^N_f(\tau) &:=& \frac{1}{c_{k+2}}\biggl[\int^{i\infty}_\tau f(z)(\bar{\tau}-z)^{k}dz\biggr]^-,
\end{eqnarray*}
where $c_k = -\frac{(k-2)!}{(2\pi i)^{k-1}}$ and $[\ ]^-$ indicates the complex conjugate of the function inside $[\ ]^-$. The Eichler integral $\mc{E}^N_f(\tau)$ is only for a cusp form $f(\tau)$.

We introduce the {\it{period functions}} for $f(\tau)$ by
\begin{eqnarray*}
r(f,\gamma;\tau) &:=& c_{k+2}(\mc{E}_f - \mc{E}_f|_{-k,\chi,\rho}\gamma)(\tau),\\
%r^H(f,\gamma;\tau) &:=& c_{k+2}(\mc{E}^H_f - \mc{E}^H_f|_{-k,\chi,\rho}\gamma)(\tau),\\
r^N(f,\gamma;\tau) &:=& c_{k+2}(\mc{E}^N_f - \mc{E}^N_f|_{-k,\bar{\chi},\bar{\rho}}\gamma)(\tau),
\end{eqnarray*}
where $\gamma\in\Gamma$.

In \cite[Theorem 1]{Leh0}, Lehner showed that the Fourier coefficients of modular forms of negative weight are completely determined by the principal part of the expansion of those forms at the cusps using the circle method. Hence, we can define the constant term $c_f$ associated with $\mc{E}_f(\tau)$ using the principal part of $\mc{E}_f(\tau)$. For example, if we assume that $f(\tau)$ has a pole at $i\infty$ and that it is holomorphic at all other cusps, then $c_f$ is equal to
\begin{equation} \label{cf}
c_f := \sum_{j=1}^p \delta_{\kappa_j,0}\biggl(\frac{1}{\lambda(k+1)!}\sum_{t=1}^p\sum_{l<0}\sum_{\gamma=\sm a&b\\c&d\esm\in C^+}a(l,t) \biggl(\frac{-2\pi i}{c}\biggr)^{k+2}\chi^{-1}(\gamma)\rho(\gamma^{-1})_{j,t}e^{\frac{2\pi i}{c\lambda}(l+\kappa_t)a}\biggr)\mbf{e}_j.
\end{equation}
Since we only deal with the case where $f(\tau)$ has a pole only at $i\infty$ in this paper, it is enough to have a formula for $c_f$ as in (\ref{cf}).
With this constant term $c_f$, we define another Eichler integral and period functions of $f(\tau)$ as follows:
\begin{eqnarray*}
\mc{E}^H_f(\tau) &:=& \mc{E}_f(\tau) + c_f,
\end{eqnarray*}
and
\[r^H(f,\gamma;\tau) = c_{k+2}(\mc{E}^H_f - \mc{E}^H_f|_{-k,\chi,\rho}\gamma)(\tau).\]

The period functions of supplementary functions were investigated by Knopp \cite{Kno} for scalar-valued modular forms of integer weights on a $H$-group and by Gim\'enez \cite{Gim} for vector-valued modular forms of integer weights on $\SL_2(\ZZ)$. Following the argument in \cite{Kno} and \cite{Gim}, we obtain a connection between the period function of $f(\tau)$ and
that of its supplementary function $f^*(\tau)$.

\begin{thm} \label{suppleperiod} Suppose that $k$ is a positive integer and $f(\tau)\in S_{k+2,\chi,\rho}(\Gamma)$.
Then 
\begin{equation} \label{equalitysupple}
 r^H(f,\gamma;\tau) = [r^H(f^*,\gamma;\bar{\tau})]^- 
\end{equation}
for all $\gamma\in\Gamma$.
\end{thm}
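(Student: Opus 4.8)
The plan is to establish the identity \eqref{equalitysupple} by comparing the two sides through their defining integrals and exploiting the fact that $f^*$ is built from $f$ by conjugating Fourier coefficients and replacing each Poincaré series $P_{n_i,\alpha_i,\chi,\rho}$ by $P_{n_i',\alpha_i,\bar\chi,\bar\rho}$. First I would reduce to the case $f = P_{n,\alpha,\chi,\rho}$ with $-n+\kappa_\alpha>0$, since both sides of \eqref{equalitysupple} are (anti)linear in $f$ in the appropriate sense and $S_{k+2,\chi,\rho}(\Gamma)$ is spanned by such Poincaré series by part (2) of Theorem \ref{othercusp}; then $f^* = P_{n',\alpha,\bar\chi,\bar\rho}$. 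The key is to relate the Eichler integral $\mc{E}^N_f(\tau) = \frac{1}{c_{k+2}}[\int_\tau^{i\infty} f(z)(\bar\tau-z)^k dz]^-$ of the cusp form $f$ to the ``holomorphic-type'' Eichler integral $\mc{E}^H_{f^*}(\tau)$ of its supplementary function. Following Knopp \cite{Kno} and Gim\'enez \cite{Gim}, one shows that $\mc{E}^N_f$ and $[\mc{E}^H_{f^*}(\bar\tau)]^-$ differ by a function whose period functions under $|_{-k,\chi,\rho}\gamma$ vanish identically — concretely, one unfolds the Poincaré series inside the integral defining $\mc{E}^N_f$, integrates term by term against $(\bar\tau - z)^k$, and recognizes the result, after conjugation, as the Fourier expansion of $\mc{E}_{f^*}(\tau)$ up to the constant $c_{f^*}$ prescribed in \eqref{cf}; the precise choice of $c_{f^*}$ via Lehner's formula \cite{Leh0} is exactly what is needed to make the match exact rather than merely up to a constant.

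Next I would invoke the standard cohomological fact that for a cusp form $f$ the ``non-holomorphic'' period function $r^N(f,\gamma;\tau) = c_{k+2}(\mc{E}^N_f - \mc{E}^N_f|_{-k,\bar\chi,\bar\rho}\gamma)(\tau)$ equals $[r(f,\gamma;\bar\tau)]^-$ — this is the classical Eichler--Shimura/Knopp comparison between the two Eichler integrals of a cusp form, and its vector-valued analogue on an $H$-group is routine given that $\rho$ is unitary and $\rho(B)$ is diagonal at parabolic $B$. Combining this with the displayed comparison of $\mc{E}^N_f$ and $[\mc{E}^H_{f^*}(\bar\tau)]^-$ gives
\[
r^H(f,\gamma;\tau) = [r^N(f^*{}^{\,\vee},\gamma;\bar\tau)]^-
\]
in the appropriate bookkeeping, and a second application of the same comparison (now for the weakly holomorphic form $f^*$, where the constant-term correction $c_{f^*}$ upgrades $\mc{E}_{f^*}$ to $\mc{E}^H_{f^*}$) yields $r^H(f,\gamma;\tau) = [r^H(f^*,\gamma;\bar\tau)]^-$ on the generators $\gamma_1,\dots,\gamma_t$ of $\Gamma$. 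Finally, one checks the cocycle compatibility: both $\gamma\mapsto r^H(f,\gamma;\tau)$ and $\gamma\mapsto [r^H(f^*,\gamma;\bar\tau)]^-$ satisfy the same $1$-cocycle relation with respect to the $|_{-k}$ action (this is immediate from the definition $r^H(f,\gamma;\tau) = c_{k+2}(\mc{E}^H_f - \mc{E}^H_f|_{-k}\gamma)$), so agreement on generators propagates to all of $\Gamma$.

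The main obstacle will be the bookkeeping around the constant term $c_f$ and the passage between $\mc{E}_f$, $\mc{E}^N_f$, and $\mc{E}^H_f$: for a cusp form the integral $\mc{E}^N_f$ converges and is unambiguous, but the supplementary form $f^*$ has a pole at $i\infty$, so its Eichler integral must be regularized, and the whole point of Lehner's theorem \cite{Leh0} and formula \eqref{cf} is to pin down the correct constant so that the cohomological identity becomes an equality and not just a congruence modulo coboundaries. Verifying that the term-by-term integration of the unfolded Poincaré series produces exactly this constant — i.e., that the Lipschitz-summation-based Fourier expansion of Theorem \ref{fourierpoincare} for $P_{n',\alpha,\bar\chi,\bar\rho}$ is compatible with the integral transform $f\mapsto [\int_\tau^{i\infty} f(z)(\bar\tau - z)^k dz]^-$ applied to $P_{n,\alpha,\chi,\rho}$ — is the computational heart of the argument, and it is precisely here that one leans on the detailed analysis in \cite{Kno} and \cite{Gim}, adapted to the $H$-group setting with the diagonality hypothesis on $\rho$ at parabolic elements.
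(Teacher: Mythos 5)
Your overall architecture --- reduce to the Poincar\'e series $P_{n,\alpha,\chi,\rho}$ via Theorem \ref{othercusp}(2), compare Eichler integrals through an unfolding/Lipschitz-summation computation, and treat the constant $c_f$ of (\ref{cf}) as the delicate point --- is the right one, but the chain of identities in your middle step contains a conjugation error that is not mere bookkeeping. The two objects you propose to compare, $\mc{E}^N_f(\tau)$ and $[\mc{E}^H_{f^*}(\bar{\tau})]^-$, do not transform under the same slash action: since $f^*\in S^!_{k+2,\bar{\chi},\bar{\rho}}(\Gamma)$, the integral $\mc{E}^H_{f^*}$ is (up to periods) of type $(\bar{\chi},\bar{\rho})$ in weight $-k$, so its reflection $[\mc{E}^H_{f^*}(\bar{\tau})]^-$ is of type $(\chi,\rho)$, whereas $\mc{E}^N_f$ is of type $(\bar{\chi},\bar{\rho})$ --- this is precisely why the paper defines $r^N(f,\gamma;\tau)=c_{k+2}(\mc{E}^N_f-\mc{E}^N_f|_{-k,\bar{\chi},\bar{\rho}}\gamma)(\tau)$. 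Given Theorem \ref{HNperiod}, the statement equivalent to (\ref{equalitysupple}) is that $\mc{E}^N_f$ and $\mc{E}^H_{f^*}$ themselves (no reflection) have identical period functions, i.e.\ $r^N(f,\gamma;\tau)=r^H(f^*,\gamma;\tau)$. If one instead follows your chain literally --- your comparison of $\mc{E}^N_f$ with $[\mc{E}^H_{f^*}(\bar{\tau})]^-$, combined with $r^H(f,\gamma;\tau)=[r^N(f,\gamma;\bar{\tau})]^-$ --- one lands on $r^H(f,\gamma;\tau)=r^H(f^*,\gamma;\tau)$ with no conjugation at all, which is not the theorem and is false for non-real $\chi$, $\rho$. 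The proposed ``second application of the same comparison to $f^*$'' is also unavailable as stated: $\mc{E}^N$ is defined only for cusp forms, and $f^*$ has a pole at $i\infty$.

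Beyond this, the computational core is deferred entirely to Knopp and Gim\'enez, whereas it is exactly what the paper proves, and by a route that never touches $\mc{E}^N$: starting from the Fourier expansion of $\mc{E}^H_{P_{n,\alpha,\chi,\rho}}$, one substitutes the Bessel-function formulas of Theorem \ref{fourierpoincare}, resums via the Lipschitz formula into a conditionally convergent negative-weight Poincar\'e-type series (formula (\ref{eichler1})), does the same for $\mc{E}^H_{P_{n',\alpha,\bar{\chi},\bar{\rho}}}$ (formula (\ref{eichler2})), and compares the two term by term using $-n'+\kappa'_\alpha=n-\kappa_\alpha$; the extracted polynomial pieces $\sum_{t=0}^{k}$ and the constants $c_{P_{n',\alpha,\bar{\chi},\bar{\rho}}}$ are what make the period functions match exactly. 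A corrected version of your route --- proving directly that $\mc{E}^H_{f^*}-\mc{E}^N_f$ is $|_{-k,\bar{\chi},\bar{\rho}}$-invariant and then invoking Theorem \ref{HNperiod} --- would be legitimate, but beware of circularity: in the paper that invariance is the modularity of the harmonic Maass form $G_{n',\alpha,\bar{\chi},\bar{\rho}}$ established in Theorem \ref{main1}, which is itself deduced from Theorem \ref{suppleperiod}, so you would have to establish it independently by essentially the same explicit computation.
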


\begin{proof} [\bf{Proof of Theorem \ref{suppleperiod}}]
Since $\{P_{n,\alpha,\chi,\rho}(\tau)|\ -n+\kappa_\alpha>0 \}$ generates $S_{k+2,\chi,\rho}(\Gamma)$, it is enough to check the equation (\ref{equalitysupple}) for $P_{n,\alpha,\chi,\rho}(\tau)$ with $-n+\kappa_{\alpha}>0$.
Its Fourier expansion is of the form
\[P_{n,\alpha,\chi,\rho}(\tau) = e^{2\pi i(-n+\kappa_{\alpha})\tau/\lambda}\mbf{e}_\alpha + \sum_{j=1}^p\sum_{l+\kappa_j>0}a_{n,\alpha,\chi,\rho}(l,j)e^{2\pi i(l+\kappa_j)\tau/\lambda}\mbf{e}_j,\]
where the Fourier coefficients $a_{n,\alpha,\chi,\rho}(l,j)$ is given in Theorem \ref{fourierpoincare}. Then its Eichler integral $\mc{E}_{P_{n,\alpha,\chi,\rho}}^H(\tau)$ is equal to
\begin{eqnarray*}
 \biggl(\frac{-n+\kappa_\alpha}{\lambda}\biggr)^{-(k+1)}e^{2\pi i(-n+\kappa_\alpha)\tau/\lambda}\mbf{e}_\alpha + \sum_{j=1}^p\sum_{l+\kappa_j>0}a_{n,\alpha,\chi,\rho}(l,j)\biggl(\frac{l+\kappa_j}{\lambda}\biggr)^{-(k+1)}e^{2\pi i(l+\kappa_j)\tau/\lambda}\mbf{e}_j.
\end{eqnarray*}
If we use Theorem \ref{fourierpoincare} and the definition of the Bessel function $J_{k+1}(z)$, then we have
\begin{eqnarray*}
\mc{E}_{P_{n,\alpha,\chi,\rho}}^H(\tau) &=& \biggl(\frac{-n+\kappa_\alpha}{\lambda}\biggr)^{-(k+1)}e^{2\pi i(-n+\kappa_\alpha)\tau/\lambda}\mbf{e}_\alpha+
 \sum_{j=1}^p\frac{2\pi i^{-(k+2)}}{\lambda^{-k}}\sum_{\gamma=\sm a&b\\c&d\esm\in C^+} c^{-1}\chi(\gamma)^{-1}\rho(\gamma^{-1})_{j,\alpha}\\
&&\times e^{\frac{2\pi i}{c\lambda}(-n+\kappa_\alpha)a}\sum_{t=0}^\infty \frac{(-1)^t(-n+\kappa_\alpha)^t}{t!\Gamma(t+k+2)}\biggl(\frac{2\pi}{c\lambda}\biggr)^{2t+k+1} \sum_{l+\kappa_j>0}(l+\kappa_j)^te^{\frac{2\pi i}{c\lambda}(l+\kappa_j)d}\mbf{e}_j.
\end{eqnarray*}
If we use the Lipschitz summation formula as in the proof of Theorem \ref{fourierpoincare}, we see that
\begin{eqnarray*}
&&\sum_{t=0}^\infty \frac{(-1)^t(-n+\kappa_\alpha)^t}{t!\Gamma(t+k+2)}\biggl(\frac{2\pi}{c\lambda}\biggr)^{2t+k+1}\sum_{l+\kappa_j>0}(l+\kappa_j)^te^{\frac{2\pi i}{c\lambda}(l+\kappa_j)d}\\
&& = \delta_{\kappa_j,0}\frac{-1}{2\Gamma(k+2)}\biggl(\frac{2\pi}{c\lambda}\biggr)^{k+1}
+ \sum_{t=0}^\infty \frac{(-1)^t(-n+\kappa_\alpha)^t}{t!\Gamma(t+k+2)}\biggl(\frac{2\pi}{c\lambda}\biggr)^{2t+k+1}\frac{\Gamma(t+1)}{(2\pi)^{t+1}}\\
&&\times\lim_{N\to\infty} \sum_{q=-N}^N e^{2\pi iq\kappa_j}\biggl(-i\biggl(\frac{\tau}{\lambda}+\frac {d}{c\lambda}-q\biggr)\biggr)^{-t-1}.
\end{eqnarray*}
 Therefore, we have that the Eichler integral $\mc{E}_{P_{n,\alpha,\chi,\rho}}^H(\tau)$ equals to
\begin{eqnarray*}
&&  \biggl(\frac{-n+\kappa_\alpha}{\lambda}\biggr)^{-(k+1)}e^{2\pi i(-n+\kappa_\alpha)\tau/\lambda}\mbf{e}_\alpha+\frac{(-1)^{k+1}}{2}\overline{c_{P_{n',\alpha,\bar{\chi},\bar{\rho}}}}\\
&&+\biggl(\frac{-n+\kappa_\alpha}{\lambda}\biggr)^{-(k+1)} \sum_{j=1}^p\sum_{\gamma=\sm a&b\\c&d\esm\in C^+}\chi(\gamma)^{-1}\rho(\gamma^{-1})_{j,\alpha}e^{\frac{2\pi i}{c\lambda}(-n+\kappa_\alpha)a}\\
&&\times
\lim_{N\to\infty} \sum_{q=-N}^N e^{2\pi iq\kappa_j}(c\tau+d-c\lambda q)^k \sum_{t=k+1}^\infty \frac{1}{k!}\biggl(\frac{2\pi i(n-\kappa_\alpha)}{c\lambda(c\tau+d-c\lambda q)}\biggr)^t\mbf{e}_j,
\end{eqnarray*}
where
\[c_{P_{n',\alpha,\bar{\chi},\bar{\rho}}} = \sum_{j=1}^p \delta_{\kappa'_j,0}\biggl(\frac{1}{\lambda(k+1)!}\sum_{\gamma=\sm a&b\\c&d\esm\in C^+}\biggl(\frac{-2\pi i}{c}\biggr)^{k+2}\bar{\chi}^{-1}(\gamma)\bar{\rho}(\gamma^{-1})_{j,\alpha}e^{\frac{2\pi i}{c\lambda}(-n'+\kappa'_\alpha)a}\biggr)\mbf{e}_j.\]
Let $D = \{\sm a&b\\c&d\esm\in\Gamma|\ c>0,\ 0\leq a<c\lambda\}$. Then we have
\begin{eqnarray*}
\mc{E}_{P_{n,\alpha,\chi,\rho}}^H(\tau) &=& \biggl(\frac{-n+\kappa_\alpha}{\lambda}\biggr)^{-(k+1)}e^{2\pi i(-n+\kappa_\alpha)\tau/\lambda}\mbf{e}_\alpha +\frac{(-1)^{k+1}}{2}\overline{c_{P_{n',\alpha,\bar{\chi},\bar{\rho}}}}\\
&& + \biggl(\frac{-n+\kappa_\alpha}{\lambda}\biggr)^{-(k+1)} \sum_{j=1}^p\lim_{N\to\infty} \sum_{\gamma = \sm a&b\\c&d\esm\in D\atop |d|\leq N} \chi(\gamma)^{-1}\rho(\gamma^{-1})_{j,\alpha}e^{\frac{2\pi i}{c\lambda}(-n+\kappa_\alpha)a}
(c\tau+d)^k\\
&&\times \sum_{t=k+1}^\infty \frac{1}{k!}\biggl(\frac{2\pi i(n-\kappa_\alpha)}{c\lambda(c\tau+d)}\biggr)^t\mbf{e}_j.
\end{eqnarray*}
Since $k$ is an integer, we see that
\begin{equation} \label{exponential}
 \sum_{t=k+1}^\infty \frac{1}{k!}\biggl(\frac{2\pi i(n-\kappa_\alpha)}{c\lambda(c\tau+d)}\biggr)^t = e^{\frac{2\pi i(n-\kappa_\alpha)}{c\lambda(c\tau+d)}} - \sum_{t=0}^k \frac{1}{k!}\biggl(\frac{2\pi i(n-\kappa_\alpha)}{c\lambda(c\tau+d)}\biggr)^t
\end{equation}
and
\begin{equation} \label{exponential2}
e^{\frac{2\pi i}{c\lambda}(-n+\kappa_\alpha)a} e^{\frac{2\pi i(n-\kappa_\alpha)}{c\lambda(c\tau+d)}} = e^{-2\pi i(n-\kappa_\alpha)\gamma\tau/\lambda}.
\end{equation}
After some computations using (\ref{exponential}) and (\ref{exponential2}), we obtain that the Eichler integral $\mc{E}_{P_{n,\alpha,\chi,\rho}}^H(\tau)$ is the same as
\begin{eqnarray}\label{eichler1}
&& \biggl(\frac{-n+\kappa_\alpha}{\lambda}\biggr)^{-(k+1)}e^{2\pi i(-n+\kappa_\alpha)\tau/\lambda}\mbf{e}_\alpha +\frac{(-1)^{k+1}}{2}\overline{c_{P_{n',\alpha,\bar{\chi},\bar{\rho}}}}\\
\nonumber &&+ \biggl(\frac{-n+\kappa_\alpha}{\lambda}\biggr)^{-(k+1)}\sum_{j=1}^p \lim_{K\to\infty}\biggl\{ \sum_{\gamma = \sm a&b\\c&d\esm\in D\atop c\leq TK,\ |d|\leq K}\chi(\gamma)^{-1}\rho(\gamma^{-1})_{j,\alpha}(c\tau+d)^k e^{-2\pi i(n-\kappa_\alpha)\gamma\tau/\lambda}\\
\nonumber &&- \sum_{\gamma = \sm a&b\\c&d\esm\in D\atop c\leq TK,\ |d|\leq K}\chi(\gamma)^{-1}\rho(\gamma^{-1})_{j,\alpha}(c\tau+d)^k e^{\frac{2\pi i}{c\lambda}(-n+\kappa_\alpha)a}\sum_{t=0}^k \frac{1}{k!}\biggl(\frac{2\pi i(n-\kappa_\alpha)}{c\lambda(c\tau+d)}\biggr)^t\biggr\}\mbf{e}_j,
\end{eqnarray}
where $T$ is a positive integer.
By the similar way, we see that  Eichler integral $\mc{E}_{P_{n',\alpha,\bar{\chi},\bar{\rho}}}^H(\tau)$ is equal to
\begin{eqnarray} \label{eichler2}
&& \biggl(\frac{-n'+\kappa'_\alpha}{\lambda}\biggr)^{-(k+1)}e^{2\pi i(-n'+\kappa'_\alpha)\tau/\lambda}\mbf{e}_\alpha +\frac{1}{2}c_{P_{n',\alpha,\bar{\chi},\bar{\rho}}}\\
\nonumber &&+ \biggl(\frac{-n'+\kappa'_\alpha}{\lambda}\biggr)^{-(k+1)}\sum_{j=1}^p \lim_{K\to\infty}\biggl\{ \sum_{\gamma = \sm a&b\\c&d\esm\in D\atop c\leq TK,\ |d|\leq K}\bar{\chi}(\gamma)^{-1}\bar{\rho}(\gamma^{-1})_{j,\alpha}(c\tau+d)^k e^{-2\pi i(n'-\kappa'_\alpha)\gamma\tau/\lambda}\\
\nonumber &&- \sum_{\gamma = \sm a&b\\c&d\esm\in D\atop c\leq TK,\ |d|\leq K}\bar{\chi}(\gamma)^{-1}\bar{\rho}(\gamma^{-1})_{j,\alpha}(c\tau+d)^k e^{\frac{2\pi i}{c\lambda}(-n'+\kappa'_\alpha)a}\sum_{t=0}^k \frac{1}{k!}\biggl(\frac{2\pi i(n'-\kappa'_\alpha)}{c\lambda(c\tau+d)}\biggr)^t\biggr\}\mbf{e}_j.
\end{eqnarray}

Note that $P_{n',\alpha,\bar{\chi},\bar{\rho}}(\tau)$ is a function supplementary to $P_{n,\alpha,\chi,\rho}(\tau)$.
Using the expressions of the Eichler integrals $\mc{E}_{P_{n,\alpha,\chi,\rho}}^H(\tau)$ and $\mc{E}_{P_{n',\alpha,\bar{\chi},\bar{\rho}}}^H(\tau)$ as in (\ref{eichler1}) and (\ref{eichler2}) and the fact that $-n' + \kappa'_j = n - \kappa_j$, then we see that
\[r^H(P_{n',\alpha,\bar{\chi},\bar{\rho}},\gamma;\bar{\tau})]^- = r^H(P_{n,\alpha,\chi,\rho},\gamma;\tau)\]
for all $\gamma\in \Gamma$, which completes the proof.
\end{proof}

On the other hand, it is well known that if $f(\tau)\in S_{k+2,\chi,\rho}(\Gamma)$, then
\[\mc{E}_f^H(\tau) = \frac{1}{c_{k+2}}\int^{i\infty}_{\tau}f(z)(\tau-z)^kdz.\]
Furthermore, we can compute $r^H(f,\gamma;\tau)$ and $r^N(f,\gamma;\tau)$ explicitly using the integral representation of those (see \cite{KM2}):
\begin{eqnarray*}
r^H(f,\gamma;\tau) &=& \int^{i\infty}_{\gamma^{-1}(i\infty)}f(z)(\tau-z)^kdz,\\
r^N(f,\gamma;\tau) &=& \biggl[\int^{i\infty}_{\gamma^{-1}(i\infty)}f(z)(\bar{\tau}-z)^kdz\biggr]^-.
\end{eqnarray*}
From this formula, we obtain the following result.

\begin{thm} \cite[Section 2]{HK} \label{HNperiod} Suppose that $k$ is a positive integer and $f(\tau)\in S_{k+2,\chi,\rho}(\Gamma)$.
Then 
\[r^H(f,\gamma;\tau) = [r^N(f,\gamma;\bar{\tau})]^-.\]
\end{thm}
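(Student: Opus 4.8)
The plan is to deduce the identity directly from the explicit integral representations of $r^H(f,\gamma;\tau)$ and $r^N(f,\gamma;\tau)$ recorded immediately above, which turn the theorem into a statement about conjugating twice. First I would make those representations precise. Starting from $\mc{E}^H_f(\tau) = \frac{1}{c_{k+2}}\int^{i\infty}_\tau f(z)(\tau-z)^k dz$ — which holds because $f$ is a genuine cusp form, so its principal part vanishes and hence $c_f = 0$, giving $\mc{E}^H_f = \mc{E}_f$ — one evaluates $(\mc{E}^H_f|_{-k,\chi,\rho}\gamma)(\tau)$ by the substitution $z = \gamma w$, using $dz = (cw+d)^{-2}dw$, $\gamma\tau - \gamma w = (\tau-w)(c\tau+d)^{-1}(cw+d)^{-1}$, and the transformation law $f(\gamma w) = \chi(\gamma)(cw+d)^{k+2}\rho(\gamma)f(w)$. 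All the automorphy factors cancel and one obtains $(\mc{E}^H_f|_{-k,\chi,\rho}\gamma)(\tau) = \frac{1}{c_{k+2}}\int^{\gamma^{-1}(i\infty)}_\tau f(w)(\tau-w)^k dw$; subtracting and multiplying by $c_{k+2}$ yields $r^H(f,\gamma;\tau) = \int^{i\infty}_{\gamma^{-1}(i\infty)} f(z)(\tau-z)^k dz$. Running the same computation through $\mc{E}^N_f(\tau) = \frac{1}{c_{k+2}}[\int^{i\infty}_\tau f(z)(\bar\tau-z)^k dz]^-$, with the conjugate multiplier data $\bar\chi,\bar\rho$ now appearing in the slash action, gives $r^N(f,\gamma;\tau) = [\int^{i\infty}_{\gamma^{-1}(i\infty)} f(z)(\bar\tau-z)^k dz]^-$.

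With both formulas available, the conclusion is a one-line computation: replacing $\tau$ by $\bar\tau$ in the expression for $r^N$ and using $\overline{\bar\tau} = \tau$ gives $r^N(f,\gamma;\bar\tau) = [\int^{i\infty}_{\gamma^{-1}(i\infty)} f(z)(\tau-z)^k dz]^-$, and applying $[\ ]^-$ once more cancels the inner conjugation, so $[r^N(f,\gamma;\bar\tau)]^- = \int^{i\infty}_{\gamma^{-1}(i\infty)} f(z)(\tau-z)^k dz = r^H(f,\gamma;\tau)$, as claimed.

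I expect the only real obstacle to be analytic rather than algebraic, namely justifying that the contour integrals converge and are independent of the path inside $\HH$. Near $i\infty$ the cusp form $f$ decays exponentially and dominates the polynomial factor $(\tau - z)^k$ (resp.\ $(\bar\tau-z)^k$); at the other endpoint $\gamma^{-1}(i\infty)$, which is a cusp of $\Gamma$ lying in $\RR\cup\{i\infty\}$, the modular transformation law of $f$ again forces exponential decay in the appropriate local parameter, so Cauchy's theorem permits free deformation of the contour. Granting the integral representations — which the paper imports from \cite{KM2} — there is nothing further to do; the duality between $r^H$ and $r^N$ is exactly the statement that substituting $\bar\tau$ and conjugating returns the original integral.
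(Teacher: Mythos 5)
Your proposal is correct and is essentially the paper's own argument: the paper likewise records the integral representations $r^H(f,\gamma;\tau) = \int^{i\infty}_{\gamma^{-1}(i\infty)}f(z)(\tau-z)^k dz$ and $r^N(f,\gamma;\tau) = [\int^{i\infty}_{\gamma^{-1}(i\infty)}f(z)(\bar{\tau}-z)^k dz]^-$ (citing \cite{KM2}) and deduces the identity by the same substitute-$\bar{\tau}$-and-conjugate observation, importing the result from \cite[Section 2]{HK}. You supply somewhat more detail (the change of variables verifying the representations and the remark that $c_f=0$ for a cusp form so $\mc{E}^H_f=\mc{E}_f$), all of which checks out.
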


\section{$L$-functions}\label{section3} In this section, we recall the theory of $L$-functions associated to scalar-valued weakly holomorphic modular forms using the definition in \cite{BFK} and extend this theory to general $H$-groups. Let $\Gamma$ be a $H$-group. Now consider $f(\tau) = \sum_{m\gg-\infty} a(m)e^{2\pi i(m+\kappa)\tau/\lambda}\in S^!_{k,\chi}(\Gamma)$, where $S^!_{k,\chi}(\Gamma)$ denotes the space of weakly holomorphic cusp forms of weight $k$ and character $\chi$ on $\Gamma$.

Let $\gamma = \sm a&b\\c&d\esm\in\Gamma$ and let $t_0>0$. Then  a twisted $L$-series for $f(\tau)$ is defined by
\begin{eqnarray} \label{dfntwisted}
L(f,\zeta_{c\lambda}^{-d},s) &:=& \frac{(2\pi)^s}{\Gamma(s)}L^*(f,\zeta_{c\lambda}^{-d},s),\\
\nonumber L^*(f,\zeta_{c\lambda}^{-d},s) &:=& \sum_{m\gg-\infty} \frac{a(m)\zeta^{-d(m+\kappa)}_{c\lambda}\Gamma\biggl(s,\frac{2\pi (m+\kappa)t_0}\lambda\biggr)}{\biggl(\frac{2\pi (m+\kappa)}\lambda\biggr)^s}\\
\nonumber && + \chi^{-1}(\gamma)i^k(-c)^{k-2s}\sum_{m\gg-\infty} \frac{a(m)\zeta_{c\lambda}^{a(m+\kappa)}\Gamma\biggl(k-s, \frac{2\pi (m+\kappa)}{c^2t_0\lambda}\biggr)}{\biggl(\frac{2\pi (m+\kappa)}\lambda\biggr)^{k-s}}.
\end{eqnarray}
It is known that this definition is independent of $t_0$ (see the proof of Theorem \ref{integralrep}).
Here, the incomplete gamma function $\Gamma(s,z)$ is given by the analytic continuation (to an entire function with respect to $s$ and fixed $z\neq 0$) of $\int_{z}^\infty e^{-t}t^{s-1}dt$ and $\tau^s = |\tau|e^{i\arg(\tau)s}$, $-\pi < \arg(\tau) \leq \pi$.

To give an integral representation of $L(f,\zeta_{c\lambda}^{-d},s)$,
 we require certain regularized integrals. For this, consider a continuous function $f:\HH\to\CC$ such that $f(\tau) = O(e^{cv})$ for some constant $c>0$ uniformly in $u$ as $v\to\infty$. Then, for each $\tau_0\in\HH$, the integral
\[\int^{i\infty}_{\tau_0} e^{tiw}f(w)dw\]
is convergent for $t\in\CC$ with $\Re(t)\gg0$, where the path of integration lies within a vertical strip. If this integral has an analytic continuation to $t=0$,  the {\it{regularized integral}} is defined by
\[R.\int^{i\infty}_{\tau_0}f(w)dw := \biggl[\int^{i\infty}_{\tau_0} e^{tiw}f(w)dw\biggr]_{t=0},\]
where the right hand side means that we take the value at $t=0$ of the analytic continuation of the integral. Similarly,  integrals at other cusps $q$ also can be defined. Specifically, suppose that $q = \gamma_q(i\infty)$ for $\gamma_q = \sm a&b\\c&d\esm\in \SL_2(\RR)$. If $f(\gamma_q\tau) = O(e^{cv})$, then we let
\[R. \int^{q}_{\tau_0}f(w)dw := R. \int^{i\infty}_{\gamma_q^{-1}\tau_0}(cw+d)^{-2}f(\gamma_q w)dw.\]
For cusps $q_1,q_2$, we set
\begin{equation} \label{cuspregular}
R. \int_{q_1}^{q_2} f(w)dw := R. \int_{\tau_0}^{q_2}f(w)dw + R. \int_{q_1}^{\tau_0} f(w)dw
\end{equation}
for any $\tau_0\in\HH$. One can see that this integral is independent of $\tau_0\in\HH$. The following result gives an integral representation of $L(f,\zeta_{c\lambda}^{-d},s)$. 

\begin{thm} \label{integralrep} Assume that $f(\tau)\in S^!_{k,\chi}(\Gamma)$. We have the identity
\begin{equation} \label{integralrep2}
L^*(f,\zeta_{c\lambda}^{-d},s) = i^{-s}R.\int^{i\infty}_{-\frac dc}f(\tau)\biggl(\tau+\frac dc\biggr)^{s-1}d\tau
\end{equation}
for $\gamma = \sm a&b\\c&d\esm\in\Gamma$.
\end{thm}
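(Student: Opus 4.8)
The plan is to prove \eqref{integralrep2} by a regularized Hecke--Mellin argument. Since $f$ has only finitely many Fourier coefficients $a(m)$ with $m+\kappa<0$, split $f=P+g$ into its finite principal part $P(\tau)=\sum_{m+\kappa<0}a(m)e^{2\pi i(m+\kappa)\tau/\lambda}$ and the exponentially decaying tail $g(\tau)=\sum_{m+\kappa\geq 0}a(m)e^{2\pi i(m+\kappa)\tau/\lambda}$. Fix the same $t_0>0$ as in \eqref{dfntwisted} and break the regularized integral at the interior point $\tau_0=-\tfrac dc+it_0$ via \eqref{cuspregular}:
\begin{align*}
R.\!\int_{-d/c}^{i\infty}\!f(\tau)\Bigl(\tau+\tfrac dc\Bigr)^{s-1}\!d\tau
&= R.\!\int_{\tau_0}^{i\infty}\!f(\tau)\Bigl(\tau+\tfrac dc\Bigr)^{s-1}\!d\tau\\
&\quad+ R.\!\int_{-d/c}^{\tau_0}\!f(\tau)\Bigl(\tau+\tfrac dc\Bigr)^{s-1}\!d\tau .
\end{align*}
The first summand will yield the first series of $L^{*}(f,\zeta_{c\lambda}^{-d},s)$ and the second will yield the second series; since the left side is manifestly independent of the splitting point, the $t_0$-independence asserted after \eqref{dfntwisted} will drop out.

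For the term at $i\infty$, take the vertical ray $\tau=-\tfrac dc+iy$, $y\geq t_0$, along which $\tau+\tfrac dc=iy$ and $f(\tau)=\sum_m a(m)\zeta_{c\lambda}^{-d(m+\kappa)}e^{-2\pi(m+\kappa)y/\lambda}$. I would insert the regularizing factor $e^{ti\tau}$ and integrate term by term: the tail $g$ contributes by dominated convergence at $t=0$, while each of the finitely many terms of $P$ contributes, after the substitution $u=(t+2\pi(m+\kappa)/\lambda)y$ and analytic continuation in $t$ to $0$,
\[
i^{s}\,a(m)\,\zeta_{c\lambda}^{-d(m+\kappa)}\,\frac{\Gamma\!\bigl(s,2\pi(m+\kappa)t_0/\lambda\bigr)}{\bigl(2\pi(m+\kappa)/\lambda\bigr)^{s}},
\]
the branch of $(2\pi(m+\kappa)/\lambda)^{-s}$ for $m+\kappa<0$ being the one dictated by that continuation. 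Summing over $m$ gives exactly $i^{s}$ times the first series of $L^{*}$.

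For the term at the cusp $-\tfrac dc=\gamma^{-1}(i\infty)$, apply the definition of the cusp-regularized integral with $\gamma_q=\gamma^{-1}=\sm d&-b\\-c&a\esm\in\Gamma$. Under $\tau=\gamma^{-1}w$ the new variable $w$ traverses the vertical ray $\Re(w)=\tfrac ac$ from $\gamma\tau_0=\tfrac ac+\tfrac{i}{c^{2}t_0}$ up to $i\infty$; using the transformation law $f(\gamma^{-1}w)=\chi(\gamma)^{-1}(-cw+a)^{k}f(w)$ together with $\gamma^{-1}w+\tfrac dc=\bigl(c(-cw+a)\bigr)^{-1}$ and $d\tau=(-cw+a)^{-2}dw$, the pulled-back integrand becomes $\chi(\gamma)^{-1}c^{1-s}(-cw+a)^{k-s-1}f(w)$. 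On this ray $-cw+a=-icy$ and $f(w)=\sum_m a(m)\zeta_{c\lambda}^{a(m+\kappa)}e^{-2\pi(m+\kappa)y/\lambda}$, and the regularizing factor $e^{tiw}$ built into the cusp-regularized integral now genuinely provides decay since $\Re(w)$ is constant. Integrating term by term exactly as before produces $i^{s}$ times the second series of $L^{*}$, the constant $\chi(\gamma)^{-1}i^{k}(-c)^{k-2s}$ emerging once the powers of $i$, of $c$, and the power of $-i$ coming from $(-cw+a)^{k-s-1}=(-icy)^{k-s-1}$ are collected.

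Adding the two contributions and dividing by $i^{s}$ yields \eqref{integralrep2}, and the $\tau_0$-independence of \eqref{cuspregular} gives the $t_0$-independence of $L^{*}$. The overall strategy is routine; the real work, and the main obstacle, is the bookkeeping of the several fractional powers: the branch of $(-cw+a)^{k-s-1}$ coming from modularity at the cusp, and the branches of $(2\pi(m+\kappa)/\lambda)^{-s}$ and $(2\pi(m+\kappa)/\lambda)^{-(k-s)}$ for the finitely many indices with $m+\kappa<0$, which are determined only through the analytic continuation in the regularization parameter $t$. One has to verify that, under the convention $\tau^{s}=|\tau|e^{is\arg\tau}$ with $-\pi<\arg\tau\leq\pi$, these choices reproduce exactly the normalizing factors $i^{-s}$, $i^{k}$, and $(-c)^{k-2s}$ in \eqref{dfntwisted}, and that each piece is analytic in $s$ so that the interchange of summation and integration and the continuation in $t$ are legitimate.
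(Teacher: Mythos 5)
Your proposal is correct and follows essentially the same route as the paper's proof: split the regularized integral at $it_0-\tfrac dc$ via \eqref{cuspregular}, evaluate the piece at $i\infty$ by inserting the Fourier expansion and continuing in the regularization parameter, and handle the piece at $-\tfrac dc=\gamma^{-1}(i\infty)$ by the modular transformation law and the change of variable $\tau\mapsto\gamma^{-1}w$, which produces the factor $\chi^{-1}(\gamma)c^{1-s}(-cw+a)^{k-s-1}$ exactly as in the paper. The only differences are cosmetic: you make the principal-part/tail decomposition explicit to justify term-by-term integration, and you defer (rather than carry out) the final branch bookkeeping, which the paper performs by the same substitutions you describe.
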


\begin{proof} [\bf Proof of Theorem \ref{integralrep}] Fix $t_0>0$. Then 
by the definition of the regularized integral for two cusps as in (\ref{cuspregular})
we can divide the integral in (\ref{integralrep2}) into two parts:
\begin{eqnarray} \label{twopart}
i^{-s}R.\int^{i\infty}_{it_0-\frac dc}f(\tau)\biggl(\tau+\frac dc\biggr)^{s-1}d\tau + i^{-s}R.\int^{it_0-\frac dc}_{-\frac dc}f(\tau)\biggl(\tau+\frac dc\biggr)^{s-1}d\tau.
\end{eqnarray}
Inserting the Fourier expansion of $f(\tau) = \sum_{m\gg-\infty} a(m)e^{2\pi i(m+\kappa)\tau/\lambda}$ yields that we have
\begin{eqnarray} \label{integral2}
\nonumber i^{-s}R.\int^{i\infty}_{it_0-\frac dc}f(\tau)\biggl(\tau+\frac dc\biggr)^{s-1}d\tau&=&i^{-s}R.\int^{i\infty}_{it_0} f\left(\tau-\frac dc\right)\tau^{s-1}d\tau\\
\nonumber &=& i^{-s}\left[\int^{i\infty}_{it_0} e^{ui\tau} \sum_{m\gg-\infty} a(m) e^{2\pi i(m+\kappa)\tau/\lambda}\zeta_{c\lambda}^{-(m+\kappa)d}\tau^{s-1}d\tau\right]_{u=0}\\
 &=& i^{-s}\sum_{m\gg-\infty} a(m) \left[\int^{i\infty}_{it_0} e^{ui\tau} e^{2\pi i(m+\kappa)\tau/\lambda}\zeta_{c\lambda}^{-(m+\kappa)d}\tau^{s-1}d\tau\right]_{u=0}.
\end{eqnarray}
Using the definition of regularized integral and change of variable $\tau\mapsto it$, we see that (\ref{integral2}) is  equal to
\begin{eqnarray*}
&& \sum_{m\gg-\infty} a(m)\biggl[\int^{\infty}_{t_0}e^{-2\pi(m+\kappa)t/\lambda}\zeta_{c\lambda}^{-(m+\kappa)d}t^{s-1}e^{-ut}dt\biggr]_{u=0}\\
&&= \sum_{m\gg-\infty}a(m)\zeta_{c\lambda}^{-(m+\kappa)d}\biggl[\biggl(\frac{2\pi(m+\kappa)}{\lambda}+u\biggr)^{-s}\int^\infty_{\biggl(\frac{2\pi(m+\kappa)}{\lambda}+u\biggr)t_0} e^{-t}t^{s-1}dt\biggr]_{u=0}\\
&&=\sum_{m\gg-\infty} \frac{a(m)\zeta^{-d(m+\kappa)}_{c\lambda}\Gamma\biggl(s,\frac{2\pi (m+\kappa)t_0}\lambda\biggr)}{\biggl(\frac{2\pi (m+\kappa)}\lambda\biggr)^s}.
\end{eqnarray*}

For the second integral in (\ref{integralrep2}), note that $-\frac dc = \gamma^{-1}(i\infty)$ and that $f(\tau) = (c\tau+d)^{-k}\chi^{-1}(\gamma)f(\gamma\tau)$. One can see that
\begin{eqnarray*}
&& i^{-s}R.\int^{it_0-\frac dc}_{-\frac dc}f(\tau)\biggl(\tau+\frac dc\biggr)^{s-1}d\tau= i^{-s}c^{-k}\chi^{-1}(\gamma)R.\int^{it_0-\frac dc}_{\gamma^{-1}(i\infty)} f(\gamma\tau)\biggl(\tau+\frac dc\biggr)^{s-k-1}d\tau.
\end{eqnarray*}
By the change of variable, this integral equals to
\begin{eqnarray} \label{integral1}
&&i^{-s}c^{1-s}\chi^{-1}(\gamma)R.\int^{\frac ca+\frac{i}{c^2t_0}}_{i\infty}f(\tau)(-c\tau+a)^{k-s-1}d\tau\\
\nonumber &&=i^{-s}c^{1-s}\chi^{-1}(\gamma)R.\int^{\frac{i}{c^2t_0}}_{i\infty}f\biggl(\tau+\frac ac\biggr)(-c\tau)^{k-s-1}d\tau.
\end{eqnarray}
If we use the definition of regularized integral and the change of variable, then we obtain that the integral (\ref{integral1}) is the same as
\begin{eqnarray*}
i^{k-2s}(-1)^{k-s-1}c^{k-2s}\chi^{-1}(\gamma)\biggl[\int^{\frac{1}{c^2t_0}}_{\infty}f\biggl(it+\frac ac\biggr)t^{k-s-1}e^{-ut}dt\biggr]_{u=0}.
\end{eqnarray*}
If we insert the Fourier expansion of $f(\tau)$, then we get the following result by the similar computation as we used to compute the first integral in (\ref{twopart})
\begin{eqnarray*}
&&i^{-s}R.\int^{it_0-\frac dc}_{-\frac dc}f(\tau)\biggl(\tau+\frac dc\biggr)^{s-1}d\tau = \chi^{-1}(\gamma)i^k(-c)^{k-2s}\sum_{m\gg-\infty} \frac{a(m)\zeta_{c\lambda}^{a(m+\kappa)}\Gamma\biggl(k-s, \frac{2\pi (m+\kappa)}{c^2t_0\lambda}\biggr)}{\biggl(\frac{2\pi (m+\kappa)}\lambda\biggr)^{k-s}}.
\end{eqnarray*}
From this we can also see that the definition of $L(f,\zeta_{c\lambda}^{-d},s)$ given in (\ref{dfntwisted}) is independent of the choice of $t_0$.
\end{proof}

Using this integral representation, we can derive a formula for period functions of $f(\tau)\in S_{k,\chi}(\Gamma)$ in terms of critical values of $L$-functions.

\begin{thm}  \label{Lformula} Suppose that $f(\tau)\in S^!_{k+2, \chi}(\Gamma)$ and $\gamma = \sm a&b\\c&d\esm \in\Gamma$. Then
\begin{enumerate}
\item[(1)] $\mc{E}_f(\tau) = \frac{(-1)^k}{c_{k+2}}R.\int^{i\infty}_\tau f(z)(z-\tau)^kdz$,
\item[(2)] $r(f,\gamma;\tau) = \sum_{n=0}^{k} i^{1-n}\mat k\\n\emat  \frac{\Gamma(n+1)}{(2\pi)^{n+1}}L(f,\zeta_{c\lambda}^{-d},n+1)\biggl(\tau+\frac dc\biggr)^{k-n}$.
\end{enumerate}
\end{thm}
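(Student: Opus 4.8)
\textbf{Proof proposal for Theorem \ref{Lformula}.}

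The plan is to derive both statements directly from the integral representation in Theorem \ref{integralrep}, expanding the relevant kernels as polynomials via the binomial theorem. For part (1), I would start from the defining series $\mc{E}_f(\tau) = \sum_{n+\kappa\neq0} a(n)\bigl(\frac{n+\kappa}{\lambda}\bigr)^{-(k+1)}e^{2\pi i(n+\kappa)\tau/\lambda}$ and recognize each term as a regularized integral: for a single exponential $e^{2\pi i(n+\kappa)z/\lambda}$, the regularized integral $R.\int_\tau^{i\infty} e^{2\pi i(n+\kappa)z/\lambda}(z-\tau)^k\,dz$ evaluates (after the substitution $z\mapsto z+\tau$ and the usual $\Gamma$-function computation, identical in spirit to the computation of the first integral in \eqref{twopart}) to $(-1)^k\,c_{k+2}\bigl(\frac{n+\kappa}{\lambda}\bigr)^{-(k+1)}e^{2\pi i(n+\kappa)\tau/\lambda}$, using $c_{k+2} = -\frac{k!}{(2\pi i)^{k+1}}$. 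Summing over $n$ and interchanging sum and regularized integral — which is legitimate because only finitely many terms have $n+\kappa<0$ and the tail converges — yields (1). One must check that the $n+\kappa<0$ terms and a possible $n+\kappa=0$ term are handled consistently with the definition of $\mc{E}_f$; since $\mc{E}_f$ omits the constant term by fiat, this matches the regularized integral, which kills the constant term automatically.

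For part (2), I would write $r(f,\gamma;\tau) = c_{k+2}(\mc{E}_f - \mc{E}_f|_{-k,\chi,\rho}\gamma)(\tau)$ and substitute the integral representation from (1). After applying the slash operator, $\mc{E}_f|_{-k}\gamma(\tau)$ becomes a regularized integral from $\gamma^{-1}(i\infty) = -d/c$ to $i\infty$ against the kernel $(z-\tau)^k$ (using modularity of $f$ and the change of variables $z\mapsto\gamma z$, plus the cocycle identity for $(z-\tau)^k$ under the action), so the difference $\mc{E}_f - \mc{E}_f|_{-k}\gamma$ collapses to $\frac{(-1)^k}{c_{k+2}}R.\int_{-d/c}^{i\infty} f(z)(z-\tau)^k\,dz$. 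Now expand $(z-\tau)^k = \bigl((z+\frac dc) - (\tau+\frac dc)\bigr)^k = \sum_{n=0}^k \binom kn (z+\frac dc)^{k-n}(-1)^n(\tau+\frac dc)^n$ — or better, reindex so the power of $(\tau+\frac dc)$ is $k-n$ to match the stated formula — and pull the regularized integral inside the finite sum. Each resulting integral $R.\int_{-d/c}^{i\infty} f(z)(z+\frac dc)^{n}\,dz$ is, by Theorem \ref{integralrep} with $s = n+1$, equal to $i^{\,n+1}L^*(f,\zeta_{c\lambda}^{-d},n+1) = i^{\,n+1}\frac{\Gamma(n+1)}{(2\pi)^{n+1}}L(f,\zeta_{c\lambda}^{-d},n+1)$. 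Collecting the binomial coefficients, the powers of $i$, $-1$, and $c_{k+2}$, and relabeling the summation index, one should land exactly on $\sum_{n=0}^k i^{1-n}\binom kn \frac{\Gamma(n+1)}{(2\pi)^{n+1}}L(f,\zeta_{c\lambda}^{-d},n+1)(\tau+\frac dc)^{k-n}$.

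The main obstacle is bookkeeping the constants and the branch of the power function: Theorem \ref{integralrep} involves $\tau^{s-1}$ with a fixed branch cut, and in part (2) the exponent $(\tau+\frac dc)^{k-n}$ is an integer power so there is no ambiguity, but the intermediate step matching $i^{-s}R.\int^{i\infty}_{-d/c} f(\tau)(\tau+\frac dc)^{s-1}$ to $L^*$ at $s = n+1$ must be done carefully to extract the correct power of $i$. I would also need to justify the interchange of the finite sum with the regularized integral — this is routine since the sum is finite and each regularized integral is defined by analytic continuation of an absolutely convergent integral in the auxiliary parameter $t$, so linearity passes through. A secondary subtlety is verifying that $\mc{E}_f|_{-k,\chi,\rho}\gamma$ really produces the integral with lower endpoint $\gamma^{-1}(i\infty)$; this is the standard computation underlying Eichler integrals and period polynomials, and it uses precisely the change of variables already invoked in the proof of Theorem \ref{integralrep}.
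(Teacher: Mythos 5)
Your proposal is correct and follows essentially the same route as the paper: part (1) is the term-by-term evaluation of the regularized integral against the Fourier expansion (you run it from series to integral rather than integral to series, but it is the same computation), and part (2) telescopes $\mc{E}_f - \mc{E}_f|_{-k,\chi}\gamma$ into a single regularized integral from $-d/c$ to $i\infty$, expands $(z-\tau)^k$ binomially, and invokes Theorem \ref{integralrep} at $s=n+1$, exactly as the paper does. One small caveat: the regularized integral does not actually ``kill'' a constant term automatically (the analytic continuation in the auxiliary parameter has a pole at $t=0$ for that term), but this is moot here because $f\in S^!_{k+2,\chi}(\Gamma)$ has vanishing constant term by definition.
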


\begin{proof} [\bf Proof of Theorem \ref{Lformula}]
By the change of variable $z\mapsto z+\tau$, we have
\begin{eqnarray} \label{mc1}
\frac{(-1)^k}{c_{k+2}}R.\int^{i\infty}_\tau f(z)(z-\tau)^kdz= \frac{(-1)^k}{c_{k+2}}R.\int^{i\infty}_0f(z+\tau)z^kdz.
\end{eqnarray}
Inserting the Fourier expansion of $f(\tau) = \sum_{m\gg-\infty} a(m)e^{2\pi i(m+\kappa)\tau/\lambda}$, we obtain that the integral in (\ref{mc1}) is equal to
\begin{eqnarray*}
&&\frac{(-1)^k}{c_{k+2}}\sum_{m\gg-\infty} a(m)\biggl[e^{2\pi i(m+\kappa)\tau/\lambda}\int^{i\infty}_0
e^{2\pi i(m+\kappa)z/\lambda}e^{uiz}z^kdz\biggr]_{u=0}\\
&&=\frac{(-1)^k i^{k+1}}{c_{k+2}}\sum_{m\gg-\infty}a(m)e^{2\pi i(m+\kappa)\tau/\lambda}\biggl[\int^\infty_0 e^{-2\pi(m+\kappa)t/\lambda}e^{-ut}t^kdt\biggr]_{u=0}\\
&&=\sum_{m\gg-\infty} a(m)\biggl(\frac{m+\kappa}{\lambda}\biggr)^{-(k+1)}e^{2\pi i(m+\kappa)\tau/\lambda}.
\end{eqnarray*}
This shows that the first claim is true.

Note that $r(f,\gamma;\tau) = c_{k+2}(\mc{E}_f- \mc{E}_f|_{-k,\chi}\gamma)(\tau)$.
Then we need to compute the second term $(\mc{E}_f|_{-k,\chi}\gamma)(\tau)$. By the change of variable, we have
\begin{eqnarray*}
(\mc{E}_f|_{-k,\chi}\gamma)(\tau) &=& \frac{(-1)^k}{c_{k+2}}(c\tau+d)^k\chi^{-1}(\gamma)R.\int^{\gamma^{-1}(i\infty)}_{\tau}f(\gamma z)(\gamma z-\gamma\tau)^k(cz+d)^{-2}dz.
\end{eqnarray*}
Since $f(\tau)\in S^!_{k+2,\chi}(\Gamma)$, we obtain that
\begin{eqnarray*}
(\mc{E}_f|_{-k,\chi}\gamma)(\tau) &=& \frac{(-1)^k}{c_{k+2}}R. \int^{\gamma^{-1}(i\infty)}_{\tau} f(z)(c\tau+d)^k(cz+d)^k(\gamma z-\gamma\tau)^kdz\\
&=& \frac{(-1)^k}{c_{k+2}}R. \int^{\gamma^{-1}(i\infty)}_{\tau} f(z)(z-\tau)^kdz.
\end{eqnarray*}
Therefore, by the definition of $\mc{E}_f(\tau)$, we have
\[r(f,\gamma;\tau) = (-1)^k R. \int^{i\infty}_{\gamma^{-1}(i\infty)}f(z)(z-\tau)^kdz.\]
Using the binomial expansion, we can see that $r(f,\gamma;\tau)$ equals to
\begin{eqnarray*}
(-1)^k\sum_{n=0}^k\mat k\\ n\emat (-1)^{k-n}\biggl(\tau+\frac dc\biggr)^{k-n}R.\int^{i\infty}_{-\frac dc}f(z)\biggl(z+\frac dc\biggr)^ndz.
\end{eqnarray*}
Now the second claim follows using the integral representation of $L(f,\zeta_{c\lambda}^{-d},s)$.
\end{proof}

\section{Proofs of the main theorems} \label{section4}
In this section we prove the main results: Theorem \ref{main1}, Corollary \ref{main2} and Theorem \ref{main3}. To prove Theorem \ref{main1}, we construct a vector-valued grid. Then Corollary \ref{main2} follows from Theorem \ref{main1} and the property of Petersson's inner product. Theorem \ref{main3} follows from properties of $L$-functions and supplementary functions.

\begin{proof} [\bf Proof of Theorem \ref{main1}]
Let $P_{n,\alpha,\chi,\rho}(\tau)\in S_{k+2,\chi,\rho}(\Gamma)$ be a Poincar\'e series defined in (\ref{poincare}) for $-n+\kappa_\alpha>0$. Then $P_{n',\alpha,\bar{\chi},\bar{\rho}}(\tau) \in S^!_{k+2,\bar{\chi},\bar{\rho}}(\Gamma)$ is a supplementary function as in section \ref{section2.3}.
Now we construct a vector-valued harmonic weak Maass form using Eichler integrals.
Let
\[G_{n',\alpha,\bar{\chi},\bar{\rho}}(\tau) := G^+_{n',\alpha,\bar{\chi},\bar{\rho}} + G^-_{n',\alpha,\bar{\chi},\bar{\rho}}(\tau),\]
where
\begin{eqnarray*}
G^+_{n',\alpha,\bar{\chi},\bar{\rho}}(\tau) &:=& \biggl(\frac{-n'+\kappa'_\alpha}{\lambda}\biggr)^{k+1}\mc{E}_{P_{n',\alpha,\bar{\chi},\bar{\rho}}}^H(\tau),\\
G^-_{n',\alpha,\bar{\chi},\bar{\rho}}(\tau) &:=& -\biggl(\frac{-n'+\kappa'_\alpha}{\lambda}\biggr)^{k+1}\mc{E}_{P_{n,\alpha,\chi,\rho}}^N(\tau).
\end{eqnarray*}
We need to show that $G_{n',\alpha,\bar{\chi},\bar{\rho}}(\tau)$ is a harmonic weak Maass form in $H^*_{-k, \bar{\chi},\bar{\rho}}(\Gamma)$ by checking the conditions as in Definition \ref{dfnofhar}.

To show that $G_{n',\alpha,\bar{\chi},\bar{\rho}}(\tau)$ is invariant under the slash operator $|_{-k,\bar{\chi},\bar{\rho}}\gamma$ for $\gamma\in\Gamma$, we use the property of Eichler integrals. Fix $\gamma\in \Gamma$. Then, by the definition of $G_{n',\alpha,\bar{\chi},\bar{\rho}}(\tau)$, we have
\begin{eqnarray*}
&&G_{n',\alpha,\bar{\chi},\bar{\rho}}(\tau) - (G_{n',\alpha,\bar{\chi},\bar{\rho}}|_{-k,\bar{\chi},\bar{\rho}}\gamma)(\tau)\\
\nonumber &&= \biggl(\frac{-n'+\kappa'_\alpha}{\lambda}\biggr)^{k+1}\biggl[\mc{E}^H_{P_{n',\alpha,\bar{\chi},\bar{\rho}}}(\tau) - (\mc{E}^H_{P_{n',\alpha,\bar{\chi},\bar{\rho}}}|_{-k.\bar{\chi},\bar{\rho}}\gamma)(\tau) - \mc{E}^N_{P_{n,\alpha,\chi,\rho}}(\tau)+ (\mc{E}^N_{P_{n,\alpha,\chi,\rho}}|_{-k,\bar{\chi},\bar{\rho}}\gamma)(\tau)\biggr]\\
&&= \biggl(\frac{-n'+\kappa'_\alpha}{\lambda}\biggr)^{k+1}\frac{1}{c_{k+2}}\biggl[r^H(P_{n',\alpha,\bar{\chi},\bar{\rho}},\gamma;\tau) - r^N(P_{n,\alpha,\chi,\rho},\gamma;\tau)\biggr].
\end{eqnarray*}
Since $P_{n',\alpha,\bar{\chi},\bar{\rho}}(\tau)$ is a function supplementary to $P_{n,\alpha,\chi,\rho}(\tau)$, by Theorem \ref{suppleperiod} and Theorem \ref{HNperiod}, for any $\gamma\in\Gamma$, we have
\begin{eqnarray*}
r^H(P_{n',\alpha,\bar{\chi},\bar{\rho}},\gamma;\tau) &=&  [r^H(P_{n,\alpha,\chi,\rho},\gamma;\bar{\tau})]^-= r^N(P_{n,\alpha,\chi,\rho},\gamma;\tau)
\end{eqnarray*}
because $P_{n',\alpha,\bar{\chi},\bar{\rho}}(\tau)$ is a function supplementary to $P_{n,\alpha,\chi,\rho}(\tau)$.
This implies that for any $\gamma\in\Gamma$
\[(G_{n',\alpha,\bar{\chi},\bar{\rho}}|_{-k,\bar{\chi},\bar{\rho}}\gamma)(\tau) = G_{n',\alpha,\bar{\chi},\bar{\rho}}(\tau)\]
for all $\gamma\in\Gamma$.

For the second condition, we need to compute $\Delta_{-k}(G_{n',\alpha,\bar{\chi},\bar{\rho}})$. Since $G^+_{n',\alpha,\bar{\chi},\bar{\rho}}(\tau)$ is holomorphic, $\Delta_{-k}(G^+_{n',\alpha,\bar{\chi},\bar{\rho}})$ is equal to zero.
After some computations, we find that
\[\frac{\partial}{\partial\bar{\tau}}\biggl(\mc{E}^N_{P_{n,\alpha,\chi,\rho}}\biggr)(\tau) =  \frac{1}{c_{k+2}}\overline{P_{n,\alpha,\chi,\rho}(\tau)}(2iv)^k\]
and
\[\frac{\partial^2}{\partial\tau\partial\bar{\tau}}\biggl(\mc{E}^N_{P_{n,\alpha,\chi,\rho}}\biggr)(\tau) = \frac{1}{c_{k+2}}\overline{P_{n,\alpha,\chi,\rho}(\tau)}(2iv)^{k-1}.\]
Since the weight $-k$ hyperbolic Laplacian is given by
\[\Delta_{-k} = -4v^2\frac{\partial^2}{\partial\tau\partial\bar{\tau}}-2ikv\frac{\partial}{\partial\bar{\tau}},\]
we can  see that $\Delta_{-k}(G^-_{n',\alpha,\bar{\chi},\bar{\rho}})$ is also equal to zero. Finally, the third condition, which is the growth condition, follows from the fact that $P_{n,\alpha,\chi,\rho}(\tau)$ is a cusp form and $P_{n', \alpha,\bar{\chi},\bar{\rho}}(\tau)$ is a weakly holomorphic modular form.

For the differential operator $\xi_{-k}$, we compute $(\xi_{-k}\mc{E}^N_f)(\tau)$ for a cusp form $f(\tau)\in S_{k+2,\chi,\rho}(\Gamma)$:
\begin{eqnarray*}
(\xi_{-k}\mc{E}^N_f)(\tau) &=& 2iv^{-k}\overline{\frac{\partial}{\partial\bar{\tau}}\biggl(\frac{1}{c_{k+2}}\biggl[\int^{i\infty}_\tau f(z)(\bar{\tau}-z)^kdz\biggr]^-\biggr)}\\
&=& 2iv^{-k}\frac{1}{\overline{c_{k+2}}}\frac{\partial}{\partial\tau}\biggl(\int^{i\infty}_\tau f(z)(\bar{\tau}-z)^kdz\biggr)\\
&=& 2iv^{-k}\frac{1}{\overline{c_{k+2}}}(-f(\tau)(-2iv)^k)= -\frac{(-4\pi)^{k+1}}{k!}f(\tau).
\end{eqnarray*}
Therefore, by the definition of $G_{n',\alpha,\bar{\chi},\bar{\rho}}(\tau)$, we have
\[(\xi_{-k}G_{n',\alpha,\bar{\chi},\bar{\rho}})(\tau) = \frac{(-4\pi)^{k+1}}{k!}\biggl(\frac{-n'+\kappa'_\alpha}{\lambda}\biggr)^{k+1}P_{n,\alpha,\chi,\rho}(\tau).\]
Hence, $G_{n',\alpha,\bar{\chi},\bar{\rho}}(\tau)$ is a harmonic weak Maass form in $H^*_{-k, \bar{\chi},\bar{\rho}}(\Gamma)$.

Our claim is that a pair of families $\{G_{n_2,\alpha_2, \bar{\chi},\bar{\rho}}\}$ and $\{P_{n_1,\alpha_1,\chi,\rho}\}$ form a vector-valued grid of weight $k+2$, character $\chi$ and type $\rho$ on $\Gamma$.
By the definition of $G^+_{n_2,\alpha_2, \bar{\chi},\bar{\rho}}(\tau)$ and $P_{n_1,\alpha_1,\chi,\rho}(\tau)$, one can check that they have the desired principal part in their Fourier expansions at $i\infty$.
It remains to prove the coefficient relation which a grid should satisfy. By the way of construction, we have
\begin{eqnarray*}
G^+_{n_2,\alpha_2,\bar{\chi},\bar{\rho}}(\tau)&=& e^{2\pi i(-n_2+\kappa'_{\alpha_2})\tau/\lambda}\mbf{e}_{\alpha_2} + \sum_{j=1}^p\sum_{l+\kappa'_j>0}a_{n_2,\alpha_2,\bar{\chi},\bar{\rho}}(l,j)\biggl(\frac{-n_2+\kappa'_{\alpha_2}}{l+\kappa'_j}\biggr)^{k+1}e^{2\pi i(l+\kappa'_j)\tau/\lambda}\mbf{e}_j\\
&+&\sum_{j=1}^p \frac{\delta_{\kappa_j,0}i^k}{(k+1)!}\sum_{\gamma=\sm a&b\\c&d\esm\in C^+}\biggl(\frac{2\pi}{c\lambda}\biggr)^{k+2}\bar{\chi}^{-1}(\gamma)\bar{\rho}(\gamma^{-1})_{j,\alpha_2}e^{\frac{2\pi i}{c\lambda}(-n_2+\kappa'_{\alpha_2})a}(n_2-\kappa'_{\alpha_2})^{k+1}\mbf{e}_j,
\end{eqnarray*}
where $a_{n_2,\alpha_2,\bar{\chi},\bar{\rho}}(l,j)$ is the Fourier coefficient of $P_{n_2,\alpha_2,\bar{\chi},\bar{\rho}}(\tau)$. We have two cases: $n_1-\kappa_{\alpha_1}>0$ and $n_1-\kappa_{\alpha_1} = 0$.
Since $a_{n_1,\alpha_1,\chi,\rho}(l,j)$ is the Fourier coefficient of $P_{n_1,\alpha_1,\chi,\rho}(\tau)$,
for each case, we need to prove that the coefficient
\[a_{n_1,\alpha_1,\chi,\rho}(n_2-(\kappa_{\alpha_2}+\kappa'_{\alpha_2}),\alpha_2)\]
is equal to
\begin{equation*}
\begin{cases}
- a_{n_2,\alpha_2,\bar{\chi},\bar{\rho}}(n_1-(\kappa_{\alpha_1}+\kappa'_{\alpha_1}),\alpha_1)\biggl(\frac{-n_2+\kappa'_{\alpha_2}}{n_1-\kappa_{\alpha_1}}\biggr)^{k+1} & \text{if $n_1-\kappa_{\alpha_1}>0$},\\
-\frac{2\pi i^k}{\lambda (k+1)!}\displaystyle\sum_{\gamma = \sm a&b\\c&d\esm\in C^+}c^{-1}\bar{\chi}^{-1}(\gamma)\bar{\rho}(\gamma^{-1})_{\alpha_1,\alpha_2}e^{\frac{2\pi i}{c\lambda}(-n_2+\kappa'_{\alpha_2})a}\biggl(\frac{-2\pi(-n_2+\kappa'_{\alpha_2})}{c\lambda}\biggr)^{k+1} & \text{if $n_1-\kappa_{\alpha_1} = 0$}.
\end{cases}
\end{equation*}
For the first case, we assume that $n_1-\kappa_{\alpha_1}>0$. Then,
 by Theorem \ref{fourierpoincare}, we have
\begin{eqnarray*}
a_{n_1,\alpha_1,\chi,\rho}(n_2-(\kappa_{\alpha_2}+\kappa'_{\alpha_2}),\alpha_2) &=& \frac{2\pi i^{-k-2}}{\lambda}\sum_{\gamma=\sm a&b\\c&d\esm\in C^+}c^{-1}\biggl(\frac{n_2-\kappa'_{\alpha_2}}{n_1-\kappa_{\alpha_1}}\biggr)^{\frac{k+1}2}\chi(\gamma)^{-1}\rho(\gamma^{-1})_{\alpha_2,\alpha_1}\\
&&\times e^{\frac{2\pi i}{c\lambda}((-n_1+\kappa_{\alpha_1})a+(n_2-\kappa'_{\alpha_2})d)}I_{k+1}\biggl(\frac{4\pi}{c\lambda}\sqrt{(n_1-\kappa_{\alpha_1})(n_2-\kappa'_{\alpha_2})}\biggr).
\end{eqnarray*}
Note that $(-I_2)(C^+)^{-1} = C^+$.
If we change $\gamma \mapsto \gamma^{-1}(-I_2)$, then the coefficient $a_{n_1,\alpha_1,\chi,\rho}(n_2-(\kappa_{\alpha_2}+\kappa'_{\alpha_2}),\alpha_2)$ is equal to
\begin{eqnarray} \label{coefficient1}
&&(-1)^{k+2}\frac{2\pi i^{-k-2}}{\lambda}\sum_{\gamma=\sm a&b\\c&d\esm\in C^+}c^{-1}\biggl(\frac{n_2-\kappa'_{\alpha_2}}{n_1-\kappa_{\alpha_1}}\biggr)^{\frac{k+1}2}\bar{\chi}(\gamma)^{-1}\bar{\rho}(\gamma^{-1})_{\alpha_1,\alpha
_2}\\
\nonumber &&\times e^{\frac{2\pi i}{c\lambda}((-n_1+\kappa_{\alpha_1})(-d)+(n_2-\kappa'_{\alpha_2})(-a))}I_{k+1}\biggl(\frac{4\pi}{c\lambda}\sqrt{(n_1-\kappa_{\alpha_1})(n_2-\kappa'_{\alpha_2})}\biggr).
\end{eqnarray}
In the computation, we used that $\chi$ satisfies the non-triviality condition and that $\rho$ is a unitary representation.
Then, by definition,  (\ref{coefficient1}) is the same as
\[- a_{n_2,\alpha_2,\bar{\chi},\bar{\rho}}(n_1-(\kappa_{\alpha_1}+\kappa'_{\alpha_1}),\alpha_1)\biggl(\frac{-n_2+\kappa'_{\alpha_2}}{n_1-\kappa_{\alpha_1}}\biggr)^{k+1}.\]
If $n_1-\kappa_{\alpha_1} = 0$, then by Theorem \ref{fourierpoincare} we have
\begin{eqnarray*}
a_{n_1,\alpha_1,\chi,\rho}(n_2-(\kappa_{\alpha_2}+\kappa'_{\alpha_2}),\alpha_2) &=& \frac{(-2\pi i)^{k+2}}{\Gamma(k+2)\lambda^{k+2}}\sum_{\gamma=\sm a&b\\c&d\esm\in C^+}c^{-{k+2}}(n_2-\kappa'_{\alpha_2})^{k+1}\\
&\times&\chi(\gamma)^{-1}\rho(\gamma^{-1})_{\alpha_2,\alpha_1}e^{\frac{2\pi i}{c\lambda}(n_2-\kappa'_{\alpha_2})d}.
\end{eqnarray*}
Similarly as in the case of $n_1-\kappa_{\alpha_1}>0$ , we change $\gamma$ into $\gamma^{-1}(-I_2)$. Then we obtain that the coefficient $a_{n_1,\alpha_1,\chi,\rho}(n_2-(\kappa_{\alpha_2}+\kappa'_{\alpha_2}),\alpha_2)$ is equal to
\begin{eqnarray*}
\frac{(-2\pi i)^{k+2}}{\Gamma(k+2)\lambda^{k+2}}\sum_{\gamma=\sm a&b\\c&d\esm\in C^+}c^{-{k+2}}(n_2-\kappa'_{\alpha_2})^{k+1}\bar{\chi}(\gamma)^{-1}\bar{\rho}(\gamma^{-1})_{\alpha_1,\alpha_2}e^{\frac{2\pi i}{c\lambda}(n_2-\kappa'_{\alpha_2})(-a)}.
\end{eqnarray*}
Thus, two families $\{P_{n_1,\alpha_1,\chi,\rho}\}$ and $\{G_{n_2,\alpha_2,\bar{\chi},\bar{\rho}}\}$ give a vector-valued grid of weight $k+2$, character $\chi$ and type $\rho$ on $\Gamma$.

Now we prove the uniqueness of a grid. Suppose that there is another grid consisting of two families $\{f'_{n_1,\alpha_1,\chi,\rho}\}$ and $\{G'_{n_2,\alpha_2,\bar{\chi},\bar{\rho}}\}$. Then $G^+_{n_2,\alpha_2,\bar{\chi},\bar{\rho}}(\tau)$ and $G'^+_{n_2,\alpha_2,\bar{\chi},\bar{\rho}}(\tau)$ have the same principal part. Then $G^+_{n_2,\alpha_2,\bar{\chi},\bar{\rho}}(\tau) - G'^+_{n_2,\alpha_2,\bar{\chi},\bar{\rho}}(\tau)$ has no principal part. We consider the pairing defined in \cite{BF} as
\[\{F,G\} = (F,\xi_{-k}(G)),\]
where $F(\tau)\in S_{k+2,\chi,\rho}(\Gamma)$, $G(\tau)\in H^*_{-k,\bar{\chi},\bar{\rho}}(\Gamma)$ and $(\ ,\ )$ is a  Petersson inner product.
Note that $<F,\bar{G}>d\tau$ is a $\Gamma$-invariant $1$-form on $\HH$, where the pairing $<F, \bar{G}>$ is given by
 $\sum_{j=1}^p F_j(\tau) G_j(\tau)$.
 Since $d(<F,\bar{G}>d\tau) = -<F,\overline{L_{-k}G}>\frac{dudv}{v^2}$, we can apply the Stokes' theorem to get
\begin{equation} \label{stokesintegral}
\int_{\mc{F}(\epsilon)} <F(\tau),\overline{L_{-k}G}(\tau)>\frac{dudv}{v^2} = -\int_{\partial\mc{F}(\epsilon)} <F(\tau),\bar{G}(\tau)>d\tau,
\end{equation}
 where $\mc{F}(\epsilon)$ is the punctured domain for $\Gamma$ defined as follows (for more details, see \cite[section 3]{Cho2}).
 Let $\mc{C}$ be the set of parabolic cusps of $\Gamma$. We define an $\epsilon$-disk $B(q, \epsilon)$ at $q\in \mc{C}$ by
\[B(q,\epsilon) := \left\{\tau\in \mc{F}|\ \Im(\gamma_q^{-1} \tau)> \frac1\epsilon\right\},\]
where $\mc{F}$ is the fundamental domain for $\Gamma$.
Let $\mc{F}(\epsilon)$ denote a punctured fundamental domain for $\Gamma$ defined by
\[\mc{F}(\epsilon) = \mc{F} - \bigcup_{q\in\mc{C}} B(q,\epsilon).\]
 By the same argument as in the proof of \cite[Proposition 3.5]{BF} (or \cite[Lemma 3.1]{Cho2}), the above integral (\ref{stokesintegral}) picks the sum of constant terms  $(FG)(\tau)$ at all cusps of $\Gamma$ if we insert the Fourier expansions of $F(\tau)$ and $G(\tau)$. 
Since $\{F,G\} = \lim_{\epsilon\to0} \int_{\mc{F}(\epsilon)} <F(\tau),\overline{L_{-k}G}(\tau)>\frac{dudv}{v^2}$ and $G^+_{n_2,\alpha_2,\bar{\chi},\bar{\rho}}(\tau) - G'^+_{n_2,\alpha_2,\bar{\chi},\bar{\rho}}(\tau)$ has no principal part, we have
\[\{F,G_{n_2,\alpha_2,\bar{\chi},\bar{\rho}}-G'_{n_2,\alpha_2,\bar{\chi},\bar{\rho}}\} = (F,\xi_{-k}(G_{n_2,\alpha_2,\bar{\chi},\bar{\rho}}-G'_{n_2,\alpha_2,\bar{\chi},\bar{\rho}})) = 0\]
for all $F(z)\in S_{k+2,\chi,\rho}(\Gamma)$. This implies that $G^-_{n_2,\alpha_2,\bar{\chi},\bar{\rho}}(\tau) - G'^-_{n_2,\alpha_2,\bar{\chi},\bar{\rho}}(\tau)$ should be zero. Then $G_{n_2,\alpha_2,\bar{\chi},\bar{\rho}}(\tau) - G'_{n_2,\alpha_2,\bar{\chi},\bar{\rho}}(\tau)$ is a cusp form of negative weight and hence it also should be zero. So we showed that $\{G_{n_2,\alpha_2,\bar{\chi},\bar{\rho}}\}$ is unique. Due to the coefficient relation following from the Zagier duality, $\{f'_{n_1,\alpha_1,\chi,\rho}\}$ is completely determined by $\{G'_{n_2,\alpha_2,\bar{\chi},\bar{\rho}}\}$ and hence we see that a vector-valued grid of weight $k+2$, character $\chi$ and type $\rho$ on $\Gamma$ is unique.

Finally, we need to compute the image of $G_{n_2,\alpha_2,\bar{\chi},\bar{\rho}}(\tau)$ under the differential operators $D^{k+1}$. For the differential operator $D^{k+1}$, by the definition of $G_{n_2,\alpha_2,\bar{\chi},\bar{\rho}}(\tau)$, we have
\begin{eqnarray*}
(D^{k+1}G_{n_2,\alpha_2,\bar{\chi},\bar{\rho}})(\tau) &=&
\biggl(\frac{-n_2+\kappa'_{\alpha_2}}{\lambda}\biggr)^{k+1}(D^{k+1}\mc{E}^H_{P_{n_2, \alpha_2, \bar{\chi},\bar{\rho}}})(\tau) = \biggl(\frac{-n_2+\kappa'_{\alpha_2}}{\lambda}\biggr)^{k+1} P_{n_2, \alpha_2, \bar{\chi},\bar{\rho}}(\tau)
\end{eqnarray*}
because $(D^{k+1}\mc{E}^H_{f_{n_2, \alpha_2, \bar{\chi},\bar{\rho}}})(\tau)
= P_{n_2, \alpha_2, \bar{\chi},\bar{\rho}}(\tau)$.
This completes the proof.
\end{proof}

For the proof of
Corollary \ref{main2}, we need the following lemma about the computation of the Petersson inner product. The proof follows along lines familiar from the classical case.

\begin{lem} \label{petersson} Let $P_{n,\alpha,\chi,\rho}(\tau),\ g(\tau) \in S_{k+2,\chi,\rho}(\Gamma)$. Suppose that $g(\tau)$ has the Fourier expansion at $i\infty$ of the form
\[ \sum_{j=1}^p \sum_{l+\kappa_j>0} c(l,j)e^{2\pi i(l+\kappa_j)\tau/\lambda}\mbf{e}_j.\]
Then
\[(g,P_{n,\alpha,\chi,\rho}) = \lambda c(-n,\alpha)\biggl(\frac{\lambda}{4\pi(-n+\kappa_\alpha)}\biggr)^{k+1}\Gamma(k+1).\]
\end{lem}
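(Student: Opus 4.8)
The plan is to carry out the classical unfolding of the Petersson inner product of a cusp form against a Poincar\'e series, now in the vector-valued setting; the manipulations run parallel to the scalar computations in \cite{Leh, Ran}. Since $g(\tau)$ is a cusp form it decays at every cusp of $\Gamma$, and since the weight $k+2$ exceeds $2$ the series $P_{n,\alpha,\chi,\rho}(\tau)$ converges absolutely and locally uniformly by Proposition \ref{convergence}; these two facts make the double sum-and-integral below absolutely convergent, which justifies interchanging summation and integration. I would begin from
\[
(g, P_{n,\alpha,\chi,\rho}) = \int_{\mc{F}} \sum_{j=1}^p g_j(\tau)\,\overline{(P_{n,\alpha,\chi,\rho})_j(\tau)}\; v^{k+2}\,\frac{du\,dv}{v^2},
\]
insert the definition (\ref{poincare}) of $P_{n,\alpha,\chi,\rho}(\tau)$, and exchange the order of summation and integration.

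The heart of the argument is the unfolding. For a fixed coset representative $\gamma=\sm a&b\\c&d\esm$ of $\langle T\rangle\backslash\Gamma$, the corresponding term of the integrand is
\[
\frac12\sum_{j=1}^p g_j(\tau)\,\overline{\chi(\gamma)^{-1}(c\tau+d)^{-(k+2)}\rho(\gamma^{-1})_{j,\alpha}\,e^{2\pi i(-n+\kappa_\alpha)\gamma\tau/\lambda}}\;v^{k+2}\,\frac{du\,dv}{v^2}.
\]
I would simplify this using: that $\rho$ is unitary, so $\overline{\rho(\gamma^{-1})_{j,\alpha}}=\rho(\gamma)_{\alpha,j}$; that the automorphy law $g_j(\gamma\tau)=\chi(\gamma)(c\tau+d)^{k+2}\sum_i\rho(\gamma)_{j,i}g_i(\tau)$ gives $\sum_j\rho(\gamma)_{\alpha,j}g_j(\tau)=\chi(\gamma)^{-1}(c\tau+d)^{-(k+2)}g_\alpha(\gamma\tau)$; that $|\chi(\gamma)|=1$; and the identity $v^{k+2}|c\tau+d|^{-2(k+2)}=\Im(\gamma\tau)^{k+2}$. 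After the cancellations the term becomes $\frac12\,g_\alpha(\gamma\tau)\,\overline{e^{2\pi i(-n+\kappa_\alpha)\gamma\tau/\lambda}}\,\Im(\gamma\tau)^{k+2}\,\frac{du\,dv}{v^2}$. Since $\frac{du\,dv}{v^2}$ is $\Gamma$-invariant and the function $\tau\mapsto g_\alpha(\tau)\,\overline{e^{2\pi i(-n+\kappa_\alpha)\tau/\lambda}}\,\Im(\tau)^{k+2}$ is invariant under $\langle T\rangle$ (as one checks directly from the Fourier expansion of $g_\alpha$), summing over $\gamma\in\langle T\rangle\backslash\Gamma$ --- the factor $\frac12$ accounting for the pair $\gamma,-\gamma$ --- unfolds the integral over $\mc{F}$ to the integral over a fundamental domain for $\langle T\rangle$, namely the strip $\{u+iv:\ 0\le u\le\lambda,\ v>0\}$, giving
\[
(g, P_{n,\alpha,\chi,\rho}) = \int_0^\infty\!\!\int_0^\lambda g_\alpha(u+iv)\,\overline{e^{2\pi i(-n+\kappa_\alpha)(u+iv)/\lambda}}\,v^{k}\,du\,dv.
\]

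Finally I would substitute the Fourier expansion $g_\alpha(\tau)=\sum_{l+\kappa_\alpha>0}c(l,\alpha)e^{2\pi i(l+\kappa_\alpha)\tau/\lambda}$ and compute the two integrals: the integral over $u\in[0,\lambda]$ equals $\int_0^\lambda e^{2\pi i(l+n)u/\lambda}\,du=\lambda\,\delta_{l,-n}$, which selects the single term $l=-n$ (in range exactly because $-n+\kappa_\alpha>0$), and the remaining integral is the Gamma integral $\int_0^\infty e^{-4\pi(-n+\kappa_\alpha)v/\lambda}v^{k}\,dv=\Gamma(k+1)\bigl(\tfrac{\lambda}{4\pi(-n+\kappa_\alpha)}\bigr)^{k+1}$, yielding
\[
(g, P_{n,\alpha,\chi,\rho}) = \lambda\,c(-n,\alpha)\left(\frac{\lambda}{4\pi(-n+\kappa_\alpha)}\right)^{k+1}\Gamma(k+1),
\]
as asserted. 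I expect the only genuinely delicate step to be the unfolding --- keeping track of the multiplier $\chi$, the type $\rho$, and the automorphy factor in the right places, and the verification (via the cuspidality of $g$ together with $k+2>2$) that the double sum-integral is absolutely convergent so the rearrangement is legitimate; the subsequent Fourier-orthogonality and Gamma-integral steps are routine.
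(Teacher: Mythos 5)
Your proof is correct and follows essentially the same route as the paper's: unfold the Petersson inner product against the Poincar\'e series to reduce to an integral over the strip $0\le u\le\lambda$, $v>0$, then apply Fourier orthogonality and the Gamma integral. The paper's version is terser (it simply invokes ``the unfolding method''), whereas you spell out the unitarity of $\rho$, the automorphy cancellation, and the role of the factor $\tfrac12$ for the pair $\gamma,-\gamma$ --- all of which are accurate.
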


\begin{proof} [\bf Proof of Lemma \ref{petersson}]
 By the definition of Petersson inner product, we have
\begin{eqnarray*}
(g,P_{n,\alpha,\chi,\rho}) &=& \int_{\Gamma\setminus\HH} < g(\tau),P_{n,\alpha,\chi,\rho}(\tau)>v^{k+2}\frac{dudv}{v^2}.
\end{eqnarray*}
Since $P_{n,\alpha,\chi,\rho}(\tau)$ is a Poincar\'e series defined by
\[P_{n,\alpha,\chi,\rho}(\tau)   = \frac12 \sum_{\gamma=\sm a&b\\c&d\esm\in <T>\setminus\Gamma} \frac{e^{2\pi i(-n+\kappa_{\alpha})\gamma\tau/\lambda}}{\chi(\gamma)(c\tau+d)^{k+2}},\]
we can use the unfolding method to compute the integral:
\begin{eqnarray*}
(g,P_{n,\alpha,\chi,\rho}) &=&
 \int_0^\infty \int_0^\lambda <g(\tau), e^{2\pi i(-n+\kappa_\alpha)\tau/\lambda}>v^{k}dudv\\
  &=&   \int_0^\infty \int_0^\lambda  \sum_{l+\kappa_\alpha>0} c(l,\alpha)e^{2\pi i(l+\kappa_\alpha)\tau/\lambda}e^{-2\pi i(-n+\kappa_\alpha)\bar{\tau}/\lambda}v^k dudv.
\end{eqnarray*}
We can  see that, for integer $n$, we have
\begin{equation*}
\int^\lambda_0 e^{2\pi nu/\lambda}du=
\begin{cases}
0 & \text{if $n\neq 0$},\\
\lambda & \text{if $n=0$}.
\end{cases}
\end{equation*}
Therefore, if we use this computation, we obtain
\begin{eqnarray*}
(g,P_{n,\alpha,\chi,\rho})
&=&\int_0^\infty c(-n,\alpha)e^{-4\pi (-n+\kappa_\alpha)v/\lambda}\lambda v^kdv= \lambda c(-n,\alpha)\biggl(\frac{\lambda}{4\pi (-n+\kappa_\alpha)}\biggr)^{k+1}\Gamma(k+1).
\end{eqnarray*}
This is the desired result.
\end{proof}

Now we are ready to prove Corollary \ref{main2}.

\begin{proof} [\bf Proof of Corollary \ref{main2}]
Note that by Theorem \ref{main1} we have the following relation
\[(\xi_{-k}G_{n,\alpha,\bar{\chi},\bar{\rho}})(\tau) = \frac{(-4\pi)^{k+1}}{k!}\biggl(\frac{-n+\kappa'_\alpha}{\lambda}\biggr)^{k+1}P_{n',\alpha,\chi,\rho}(\tau).\]
Then the proof comes  from the computations of the following Petersson inner product using Lemma \ref{petersson}:
\begin{equation} \label{compare}
(P_{n'_2,\alpha_2, \chi,\rho}, P_{\tilde{n}'_2,\tilde{\alpha}_2, \chi,\rho}).
\end{equation}
Suppose that $P_{n'_2, \alpha_2,\chi,\rho}(\tau)\in S_{k+2, \chi,\rho}(\Gamma)$ has the Fourier expansion of the form
\[P_{n'_2,\alpha_2, \chi,\rho}(\tau) = \sum_{j=1}^p \sum_{l+\kappa_j>0} c_{n'_2,\alpha_2, \chi,\rho}(l, j)e^{2\pi i(l+\kappa_j)\tau/\lambda}\mbf{e}_j.\]
By the same way, we define $c_{\tilde{n}'_2, \tilde{\alpha}_2, \chi,\rho}(l,j)$ is the $l$-th Fourier coefficient of the $j$-th component of $P_{\tilde{n}'_2,\tilde{\alpha}_2,\chi,\rho}(\tau)$.
By Lemma \ref{petersson}, the Petersson inner product (\ref{compare}) equals to
\[\lambda c_{n'_2, \alpha_2, \chi,\rho}(-\tilde{n}'_2, \tilde{\alpha}_2)\biggl(\frac{\lambda}{4\pi(-\tilde{n}'_2+\kappa_{\tilde{\alpha}_2})}\biggr)^{k+1}\Gamma(k+1).\]
Since the Petersson inner product is hermitian,  we see that the Petersson inner product (\ref{compare}) equals to
\[\overline{(P_{\tilde{n}'_2,\tilde{\alpha}_2, \chi,\rho},P_{n'_2,\alpha_2, \chi,\rho} )} =
\lambda \overline{c_{\tilde{n}'_2, \tilde{\alpha}_2, \chi,\rho}(-n'_2, \alpha_2)}\biggl(\frac{\lambda}{4\pi(-n'_2+\kappa_{\alpha_2})}\biggr)^{k+1}\Gamma(k+1).\]
Now we use the fact that $(\xi_{-k}G_{n_2,\alpha_2,\bar{\chi},\bar{\rho}})(\tau)$ is the same as
\begin{equation} \label{imagexi1}
 \frac{(-4\pi)^{k+1}}{\Gamma(k+1)}\biggl(\frac{-n_2+\kappa'_{\alpha_2}}{\lambda}\biggr)^{k+1}P_{n'_2,\alpha_2,\chi,\rho}(\tau).
\end{equation}
The direct computation shows that
\[\xi_{-k}\biggl(H(2\pi(l+\kappa'_j)v/\lambda)e^{2\pi i(l+\kappa'_j)u/\lambda}\biggr) = -e^{-2\pi i(l+\kappa'_j)\tau/\lambda}\biggl(\frac{-4\pi(l+\kappa'_j)}{\lambda}\biggr)^{k+1}.\]
If we use this computation, we see that the Fourier expansion of $(\xi_{-k}G_{n_2,\alpha_2,\bar{\chi},\bar{\rho}})(\tau)$ at $i\infty$ is given in terms of $b^-_{n_2,\alpha_2,\bar{\chi},\bar{\rho}}(l,j)$ as follows:
\begin{eqnarray} \label{imagexi2}
-\sum_{j=1}^p \sum_{l+\kappa'_j<0} \overline{b^-_{n_2,\alpha_2,\bar{\chi},\bar{\rho}}(l,j)}\biggl(\frac{-4\pi(l+\kappa'_j)}{\lambda}\biggr)^{k+1}e^{-2\pi i(l+\kappa'_j)\tau/\lambda}\mbf{e}_j,
\end{eqnarray}
where $b^-_{n_2,\alpha_2}(l,j)$ is the Fourier coefficient of $G^-_{n_2,\alpha_2,\bar{\chi},\bar{\rho}}(\tau)$.
We also have the same kind of equations as in (\ref{imagexi1}) and (\ref{imagexi2}) in the case of $(\xi_{-k}G_{\tilde{n}_2,\tilde{\alpha}_2,\bar{\chi},\bar{\rho}})(\tau)$.
Note that by (\ref{imagexi1}) and (\ref{imagexi2}) we have
\begin{equation} \label{b1}
b^-_{n_2,\alpha_2,\bar{\chi},\bar{\rho}}(-\tilde{n}_2,\tilde{\alpha_2}) = -\overline{a_{n'_2,\alpha_2, \chi,\rho}(\tilde{n}_2-\delta_{\kappa_{\tilde{\alpha}_2},0},\tilde{\alpha_2})}\frac{(-4\pi)^{k+1}}{\Gamma(k+1)}\biggl(\frac{4\pi(\tilde{n}_2-\delta_{\kappa_{\tilde{\alpha}_2},0}+\kappa_{\tilde{\alpha_2}})}  {-n_2+\kappa'_{\alpha_2}}\biggr)^{-(k+1)}.
\end{equation}
By the computations of the Petersson inner product $<P_{n'_2,\alpha_2, \chi,\rho}, P_{\tilde{n}'_2,\tilde{\alpha}_2, \chi,\rho}>$, we see that
\[a_{n'_2, \alpha_2, \chi,\rho}(-\tilde{n}'_2, \tilde{\alpha}_2)\biggl(\frac{1}{-\tilde{n}'_2+\kappa_{\tilde{\alpha}_2}}\biggr)^{k+1} = \overline{a_{\tilde{n}'_2, \tilde{\alpha}_2, \chi,\rho}(-n'_2, \alpha_2)}\biggl(\frac{1}{-n'_2+\kappa_{\alpha_2}}\biggr)^{k+1}.\]
Therefore, (\ref{b1}) is equal to
\[-a_{\tilde{n}'_2,\tilde{\alpha}_2, \chi,\rho}(n_2-\delta_{\kappa_{\alpha_2,0}},\alpha_2)\frac{(-4\pi)^{k+1}}{\Gamma(k+1)}\biggl(\frac{4\pi(n_2+\delta_{\kappa_{\alpha_2},0}+\kappa_{\alpha_2})}  {-n_2+\kappa'_{\alpha_2}}\biggr)^{-(k+1)}.\]
If we again use (\ref{imagexi1}) and (\ref{imagexi2}) for $(\xi_{-k}G_{\tilde{n}_2,\tilde{\alpha}_2,\bar{\chi},\bar{\rho}})(\tau)$, then we see that (\ref{b1}) is the same as
\[\overline{b^-_{\tilde{n}_2,\tilde{\alpha}_2,\bar{\chi},\bar{\rho}}(-n_2,\alpha_2)}\biggl(\frac{-n_2+\kappa'_{\alpha_2}}{-\tilde{n}_2+\kappa'_{\tilde{\alpha}_2}}\biggr)^{k+1},\]
which completes the proof.
\end{proof}

Next, we prove Theorem \ref{main3}. This result follows from the relation coming from the supplementary function theory. Before we prove Theorem \ref{main3}, we state and prove the following lemma.

\begin{lem} \label{periodspace}
Let $\Gamma$ be a $H$-group and $\gamma_1,\cdots, \gamma_t$ be its generators of $\Gamma$. Let $P_k^t$ be the complex vector space of vector-valued polynomials $\sum_{j=1}^t p_j(\tau)\mbf{e}_j$, where the degree of each $p_j(\tau)$, $1\leq j\leq t$, is at most $k$. We define a subspace $\mc{P}_{-k,\chi}(\Gamma)$ as the vector space generated by polynomials given as
\[\sum_{j=1}^t r(f,\gamma_j;\tau)\mbf{e}_j\]
for $f(\tau)\in S_{k+2,\chi}(\Gamma)$. Then the map $\phi: S_{k+2,\chi}(\Gamma) \to \mc{P}_{-k,\chi}(\Gamma)$ given by
\[\phi(f) = \sum_{j=1}^t r(f,\gamma_j;\tau)\mbf{e}_j\]
is an isomorphism.
\end{lem}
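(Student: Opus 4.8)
The plan is to check that $\phi$ is surjective onto $\mc{P}_{-k,\chi}(\Gamma)$ essentially by construction, so that the real content is injectivity, which I would establish by the classical Eichler--Shimura argument (cocycle relation plus vanishing of negative-weight holomorphic forms). First I would verify that $\phi$ is well defined: linearity in $f$ is immediate from the linearity of $f\mapsto\mc{E}_f$, and to see that each $r(f,\gamma_j;\tau)$ is a polynomial of degree at most $k$ I would invoke \emph{Bol's identity} $D^{k+1}(g|_{-k,\chi}\gamma)=(D^{k+1}g)|_{k+2,\chi}\gamma$ applied to $g=\mc{E}_f$. Since $D^{k+1}\mc{E}_f=f$ (immediate from the Fourier expansion defining $\mc{E}_f$), this gives $D^{k+1}r(f,\gamma;\tau)=c_{k+2}(f-f|_{k+2,\chi}\gamma)(\tau)=0$ because $f\in S_{k+2,\chi}(\Gamma)$; hence $r(f,\gamma;\tau)$ is annihilated by $D^{k+1}$, so it is a polynomial of degree $\le k$ and $\phi$ indeed lands in $P_k^t$. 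Since $\mc{P}_{-k,\chi}(\Gamma)$ is by definition generated by the vectors $\sum_{j=1}^t r(f,\gamma_j;\tau)\mbf{e}_j$ and $\phi$ is linear, $\phi$ is automatically surjective onto $\mc{P}_{-k,\chi}(\Gamma)$, and it remains to prove injectivity.

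For injectivity I would first record the cocycle property of period functions. Writing $g=\mc{E}_f$, using $g|(\gamma_1\gamma_2)=(g|\gamma_1)|\gamma_2$ together with $g|\gamma_1=g-\frac{1}{c_{k+2}}r(f,\gamma_1;\cdot)$, a short manipulation yields
\[
r(f,\gamma_1\gamma_2;\tau)=r(f,\gamma_2;\tau)+\bigl(r(f,\gamma_1;\cdot)|_{-k,\chi}\gamma_2\bigr)(\tau)
\]
for all $\gamma_1,\gamma_2\in\Gamma$; together with $r(f,I;\tau)=0$ this shows that $\{\gamma\in\Gamma:\ r(f,\gamma;\cdot)=0\}$ is a subgroup of $\Gamma$. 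Now if $\phi(f)=0$, then $r(f,\gamma_j;\tau)=0$ for the generators $\gamma_1,\dots,\gamma_t$, so this subgroup is all of $\Gamma$; hence $r(f,\gamma;\tau)=0$ for every $\gamma\in\Gamma$, i.e.\ $\mc{E}_f|_{-k,\chi}\gamma=\mc{E}_f$ for all $\gamma\in\Gamma$. Thus $\mc{E}_f$ is a holomorphic function on $\HH$ transforming like a modular form of weight $-k$ and character $\chi$ on $\Gamma$.

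To finish I would argue that such an $\mc{E}_f$ must vanish. Using the integral representation $\mc{E}_f(\tau)=\frac{1}{c_{k+2}}\int_\tau^{i\infty}f(z)(\tau-z)^k\,dz$ and the fact that $f\in S_{k+2,\chi}(\Gamma)$ decays exponentially at every cusp, one checks by the standard growth estimate for Eichler integrals of cusp forms that $\mc{E}_f$ extends holomorphically to every cusp of $\Gamma$ and in fact vanishes there; so $\mc{E}_f$ is a holomorphic cusp form of weight $-k<0$ on $\Gamma$, whence the valence formula forces $\mc{E}_f\equiv 0$. Applying $D^{k+1}$ then gives $f=D^{k+1}\mc{E}_f=0$, so $\phi$ is injective, and combined with the surjectivity above it is an isomorphism. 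The step I expect to be the genuine obstacle is this last one: once $\mc{E}_f$ is known to be $\Gamma$-invariant, I must control its growth at the cusps inequivalent to $i\infty$ carefully enough to identify it with a bona fide negative-weight holomorphic modular form to which the valence formula applies; the rest of the argument is formal.
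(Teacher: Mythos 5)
Your proof is correct and follows essentially the same route as the paper's: surjectivity is immediate from the definition of $\mc{P}_{-k,\chi}(\Gamma)$, and injectivity comes from vanishing of the periods on generators forcing $r(f,\gamma;\tau)=0$ for all $\gamma$, whence $\mc{E}_f$ is a holomorphic modular form of negative weight, hence zero, and $f=D^{k+1}\mc{E}_f=0$. The only difference is that you spell out details the paper leaves implicit (Bol's identity for why the periods are degree-$\le k$ polynomials, the cocycle relation for passing from generators to all of $\Gamma$, and the growth check at the cusps), which is a welcome tightening rather than a different argument.
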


\begin{proof} [\bf Proof of Lemma \ref{periodspace}]
Since $\Gamma$ is an $H$-group, it is finitely generated. Therefore, $t$ is a well-defined finite number. By the definition of $\mc{P}_{-k,\chi}(\Gamma)$, the map $\phi$ should be surjective. We only need to prove the injectivity of $\phi$. Suppose that $\phi(f) = 0$ for $f\in S_{k+2,\chi}(\Gamma)$. Then, for each $1\leq j\leq t$, we have
\[r(f,\gamma_j;\tau) = 0.\]
Since $\{\gamma_1,\cdots, \gamma_t\}$ is a generating set of $\Gamma$, this implies that
\[r(f,\gamma;\tau) = c_{k+2}(\mc{E}_f - \mc{E}_f|_{-k,\chi}\gamma)(\tau) =0\]
for all $\gamma\in\Gamma$. Therefore, $\mc{E}_f(\tau)$ is a cusp form of negative weight $-k$ and hence it should be zero. Then we have
\[f(\tau) = D^{k+1}(\mc{E}_f)(\tau) = 0.\]
This proves that the kernel of $\phi$ is trivial. Therefore, $\phi$ is injective and the proof is completed.
\end{proof}

\begin{proof} [\bf Proof of Theorem \ref{main3}]
We define a pairing $\{\ ,\ \}$ for the space $\mc{P}_{-k,\chi}(\Gamma)$ defined in Lemma \ref{periodspace} using the Petersson inner product as follows:
\[\{\phi(f),\ \phi(g)\} := (f,g).\]
Since $\phi$ is an isomorphism by Lemma \ref{periodspace}, this pairing is well-defined. Then this pairing can be extended to the whole space $P^t_k$. We also use the same notation $\{\ ,\ \}$.
Suppose that $\biggl\{\gamma_j =\sm a_j&b_j\\ c_j& d_j\esm \big|\ 1\leq j\leq t\biggr\}$ is a generating set of $\Gamma$. If we use the change of variable $\tau\mapsto \tau+\frac{d_j}{c_j}$ for each $j$-th component, then any element in $P^t_k$ can be written as
\[\sum_{j=1}^t \sum_{i=0}^k a(i,j)\biggl(\tau+\frac{d_j}{c_j}\biggr)^i\mbf{e}_j\]
for some constants $a(i,j)\in \CC$.
Then this pairing can be written as
\begin{equation} \label{pairing}
\biggl\{\sum_{j=1}^t \sum_{i=1}^k a(i,j)\biggl(\tau+\frac{d_j}{c_j}\biggr)^i\mbf{e}_j , \sum_{j=1}^t \sum_{i=1}^k b(i,j)\biggl(\tau+\frac{d_j}{c_j}\biggr)^i\mbf{e}_j\biggr\} = \sum_{1\leq i,j\leq t} \sum_{0\leq\alpha,\beta\leq k}B_{\alpha,\beta}(i,j)a(\alpha,i)\overline{b(\beta,j)},
\end{equation}
where $B_{\alpha,\beta}(i,j)$ is a complex constant.
Now we consider the Petersson inner product
$(P_{n_1,\chi},P_{n_2,\chi})$.
If we use Lemma \ref{petersson}, then we obtain
\[(P_{n_1,\chi},P_{n_2,\chi}) = \lambda c_{n_1,\chi}(-n_2)\biggl(\frac{\lambda}{4\pi(-n_2+\kappa)}\biggr)^{k+1}\Gamma(k+1),\]
where $c_{n_1,\chi}(-n_2)$ is the Fourier coefficient of $P_{n_1,\chi}(\tau)$.
On the other hand, this Petersson inner product can be computed using the pairing (\ref{pairing}). By the definition of the pairing,
\begin{eqnarray} \label{pairing2}
(P_{n_1,\chi},P_{n_2,\chi}) &=& \{\phi(P_{n_1,\chi}),\ \phi(P_{n_2,\chi})\}= \biggl\{ \sum_{j=1}^t r(P_{n_1,\chi},\gamma_j;\tau)\mbf{e}_j,\ \sum_{j=1}^t r(P_{n_2,\chi},\gamma_j;\tau)\mbf{e}_j\biggr\}.
\end{eqnarray}
Note that by Theorem \ref{suppleperiod} we have
\begin{eqnarray*}
r(P_{n_l,\chi},\gamma;\tau) &=& r^H(P_{n_l,\chi},\gamma;\tau)= [r^H(P^*_{n_l,\chi},\gamma;\bar{\tau})]^-
\end{eqnarray*}
for $l=1,\ 2$.  Therefore, we obtain
\begin{eqnarray*}
\biggl\{ \sum_{j=1}^t r(P_{n_1,\chi},\gamma_j;\tau)\mbf{e}_j,\ \sum_{j=1}^t r(P_{n_2,\chi},\gamma_j;\tau)\mbf{e}_j\biggr\} = \biggl\{ \sum_{j=1}^t [r^H(P^*_{n_1,\chi},\gamma;\bar{\tau})]^-\mbf{e}_j,\ \sum_{j=1}^t [r^H(P^*_{n_2,\chi},\gamma;\bar{\tau})]^-\mbf{e}_j\biggr\}.
\end{eqnarray*}
We know that the period polynomial $r^H(P^*_{n_l,\chi},\gamma;\tau)$ can be written in terms of twisted $L$-functions of $P^*_{n_l,\chi}(\tau)$ by Theorem \ref{Lformula}. So we have
\begin{eqnarray*}
r^H(P^*_{n_l,\chi},\gamma_j,\tau) &=&  r(P^*_{n_l,\chi},\gamma_j;\tau) + \delta_{\kappa,0}c_{k+2}\frac{2\pi i^k}{\lambda(k+1)!}\sum_{\gamma=\sm a&b\\c&d\esm\in C^+} c^{-1}\chi(\gamma)e^{\frac{2\pi i}{c\lambda}n_ia}\\
&&\times\biggl(\frac{-2\pi}{c}\biggr)^{k+1}\biggl(1-\chi(\gamma)c_j^k\biggl(\tau+\frac {d_j}{c_j}\biggr)^k\biggr)\\
&=& \sum_{n=0}^{k} i^{1-n}\mat k\\n\emat  \frac{\Gamma(n+1)}{(2\pi)^{n+1}}L(P^*_{n_l,\chi},\zeta_{c_j\lambda}^{-d_j},n+1)\biggl(\tau+\frac {d_j}{c_j}\biggr)^{k-n}\\
&&+\delta_{\kappa,0}c_{k+2}(n_l)^{-(k+1)}\overline{b_{-n_l,\bar{\chi}}(0)}\biggl(1-\chi(\gamma_j)c_j^k\biggl(\tau+\frac{d_j}{c_j}\biggr)^k\biggr)
\end{eqnarray*}
for $l= 1,\ 2$, $1\leq j\leq t$ and $\gamma = \sm a&b\\c&d\esm\in\Gamma$.
Here, $b_{-n_l,\bar{\chi}}(0)$ is the $0$-th coefficient of $G^+_{-n_l,\bar{\chi}}(\tau)$ given by
\[\frac{(-n_l)^{k+1}}{(k+1)!}\sum_{\gamma = \sm a&b\\c&d\esm\in C^+}\biggl(\frac{2\pi}{c\lambda}\biggr)^{k+2}\chi(\gamma)e^{\frac{2\pi i}{c\lambda}n_ia},\]
for $l=1,2$.
Therefore, the pairing in (\ref{pairing2}) can be written in terms of special values of twisted $L$-function of $P^*_{n_1,\chi}(\tau)$ and $P^*_{n_2,\chi}(\tau)$. More precisely, we have
\begin{eqnarray*}
&&\biggl\{ \sum_{j=1}^t [r^H(P^*_{n_1,\chi},\gamma;\bar{\tau})]^-\mbf{e}_j,\ \sum_{j=1}^t [r^H(P^*_{n_2,\chi},\gamma;\bar{\tau})]^-\mbf{e}_j\biggr\}\\
&&= \sum_{1\leq i,j\leq t} \sum_{0\leq\alpha,\beta\leq k}B_{k-\alpha,k-\beta}(i,j) (-i)^{1-\alpha}
\mat k\\ \alpha\emat  \frac{\Gamma(\alpha+1)}{(2\pi)^{\alpha+1}}\overline{L(P^*_{n_1,\chi},\zeta_{c_i\lambda}^{-d_i},\alpha+1)}\\
&&\times i^{1-\beta}\mat k\\ \beta\emat \frac{ \Gamma(\beta+1)}{(2\pi)^{\beta+1}}L(P^*_{n_2,\chi},\zeta_{c_j\lambda}^{-d_j},\beta+1)\\
&&+\delta_{\kappa,0}\overline{c_{k+2}}(n_1)^{-(k+1)}\overline{b_{-n_1,\bar{\chi}}(0)}\sum_{1\leq i,j\leq t}\sum_{0\leq\beta\leq k} (B_{0,k-\beta}(i,j)-\bar{\chi}(\gamma)c_i^kB_{k,k-\beta}(i,j))\\
&&\times i^{1-\beta}\mat k\\ \beta\emat \frac{ \Gamma(\beta+1)}{(2\pi)^{\beta+1}}L(P^*_{n_2,\chi},\zeta_{c_j\lambda}^{-d_j},\beta+1)\\
&&+ \delta_{\kappa,0}c_{k+2}(n_2)^{-(k+1)}b_{-n_2,\bar{\chi}}(0)\sum_{1\leq i,j} \sum_{0\leq\alpha\leq k}(B_{k-\alpha,0}(i,j)-\chi(\gamma)c_j^kB_{k-\alpha,k}(i,j))\\
&&\times  (-i)^{1-\alpha}\mat k\\ \alpha\emat  \frac{\Gamma(\alpha+1)}{(2\pi)^{\alpha+1}}\overline{L(P^*_{n_1,\chi},\zeta_{c_i\lambda}^{-d_i},\alpha+1)}\\
&&+ \delta_{\kappa,0}|c_{k+2}|^2(n_1n_2)^{-(k+1)}\overline{b_{-n_1,\bar{\chi}}(0)}b_{-n_2,\bar{\chi}}(0)\\
&&\times \sum_{1\leq i,j\leq t}(B_{0,0}(i,j)-\bar{\chi}(\gamma)c_i^kB_{k,0}(i,j)-\chi(\gamma)c_j^kB_{0,k}(i,j) + (c_ic_j)^kB_{k,k}(i,j)).
\end{eqnarray*}
To shorten the length of our formula, we define another constants
\begin{eqnarray*}
A_{\alpha,\beta}(i,j) &=& \frac{i^{\alpha-\beta}k!2^{k+1}}{(k-\alpha)!(k-\beta)!\lambda^{k+2}(2\pi)^{\alpha+\beta+k+1}}B_{k-\alpha,k-\beta}(i,j),\\
B_{\alpha}(i) &=& \frac{i^{\alpha-1}c_{k+2}2^{k+1}}{(k-\alpha)!\lambda^{k+2}(2\pi)^{\alpha-k}}\sum_{j=1}^t (B_{k-\alpha,0}(i,j)-\chi(\gamma)c_j^k B_{k-\alpha,k}(i,j)),\\
C_{\beta}(j) &=& \frac{i^{1-\beta}2^{k+1}\overline{c_{k+2}}}{(k-\beta)!\lambda^{k+1}(2\pi)^{\beta-k}}\sum_{i=1}^t(B_{0,k-\beta}(i,j)-\bar{\chi}(\gamma)c_i^kB_{k,k-\beta}(i,j))
\end{eqnarray*}
and
\[D = \frac{|c_{k+2}|^2(4\pi)^{k+1}}{k!\lambda^{k+2}}\sum_{1\leq i,j\leq t}(B_{0,0}(i,j)-\bar{\chi}(\gamma)c_i^kB_{k,0}(i,j)-\chi(\gamma)c_j^kB_{0,k}(i,j) + (c_ic_j)^kB_{k,k}(i,j))\]
for $0\leq \alpha,\beta\leq k$ and $1\leq i,j\leq t$.
If we use these constants and the fact that
\[P^*_{n_i,\chi}(\tau) = P_{n_i',\bar{\chi}}(\tau) = f_{n_i',\bar{\chi}}(\tau),\]
then we get the desired result.
\end{proof}

\subsection*{Acknowledgement}
The authors are grateful to the referee for the careful reading and helpful comments which improved the exposition of this paper.

%%%%%%%%%%%%%%%%%%%%%%%%%%%%%%%%%%%%%%%%%%%%%%%
 
\end{document}